\newtheorem{thm}{Theorem}[section]
\newtheorem{cor}[thm]{Corollary}
\newtheorem{claim}[thm]{Claim}
\newtheorem{lemma}[thm]{Lemma}
\newtheorem{prop}[thm]{Proposition}
\newtheorem{theo}[thm]{Theorem}
\theoremstyle{definition}
\newtheorem{remark}[thm]{Remark}
\newtheorem{de}[thm]{Definition}
\newcommand{\BBT}{\mathrm{BBT}}
\newcommand{\Lip}{\mathrm{Lip}}
\newcommand{\vol}{\overline{\mathrm{vol}}}
\newcommand{\Aut}{\mathrm{Aut}}
\newcommand{\Out}{\mathrm{Out}}
\newcommand{\Fix}{\mathrm{Fix}}
\newcommand{\Axis}{\mathrm{Axis}}
\newcommand{\Stab}{\mathrm{Stab}}
\newcommand{\ad}{\mathrm{ad}}
\newcommand{\id}{\mathrm{Id}}
\newcommand{\Char}{\mathrm{Char}}
\newcommand{\Isom}{\mathrm{Isom}}
\newcommand{\cald}{\mathscr{D}}
\newcommand{\calp}{\mathcal{P}}
\newcommand{\calz}{\mathcal{Z}}
\title{Acylindrical hyperbolicity of automorphism groups of infinitely-ended groups}
\date{\today}
\author{Anthony Genevois and Camille Horbez}
\begin{document}

\maketitle

\begin{abstract}
We prove that the automorphism group of every infinitely-ended finitely generated group is acylindrically hyperbolic. In particular $\Aut(\mathbb{F}_n)$ is acylindrically hyperbolic for every $n\ge 2$. More generally, if $G$ is a group which is not virtually cyclic, and hyperbolic relative to a finite collection $\mathcal{P}$ of finitely generated proper subgroups, then $\Aut(G,\calp)$ is acylindrically hyperbolic. 

As a consequence, a free-by-cyclic group $\mathbb{F}_n\rtimes_{\varphi}\mathbb{Z}$ is acylindrically hyperbolic if and only if $\varphi$ has infinite order in $\Out(\mathbb{F}_n)$.
\end{abstract}

\tableofcontents

\newpage

\section{Introduction}

\noindent
Following the seminal work of Gromov on hyperbolic groups \cite{Gro}, it is a natural problem in geometric group theory to look for aspects of negative curvature among groups. In this vein, Osin defined a group $G$ to be \emph{acylindrically hyperbolic} if it admits a nonelementary acylindrical action on a hyperbolic space \cite{OsinAcyl}. As follows from work of Dahmani, Guirardel and Osin \cite{DGO}, acylindrical hyperbolicity turns out to have important algebraic consequences for the group: most strikingly, every acylindrically hyperbolic group $G$ has uncountably many normal subgroups, and is \emph{SQ-universal}, i.e.\ every countable group embeds in a quotient of $G$.

\medskip\noindent
One of the most natural questions about a given group $G$ is to understand its automorphism group $\Aut(G)$. In \cite{AutRAAG,MR4011668}, the first named author investigated the following question: under which conditions can negative curvature for $G$ be promoted to negative curvature for $\Aut(G)$? In a sense, one wants to understand how $\mathrm{Inn}(G)$ sits inside $\Aut(G)$ to deduce negative curvature properties of $\Aut(G)$. In \cite{MR4011668}, it was proved that the automorphism group of every one-ended hyperbolic group is acylindrically hyperbolic. Also, in \cite{AutRAAG}, the first named author characterised the acylindrical hyperbolicity of automorphism groups of one-ended right-angled Artin and right-angled Coxeter groups; similar results about the larger family of graph products of groups can be found in \cite{AutGP, AutRAAG}. In the present paper, we investigate the geometry of $\Aut(G)$ when $G$ is infinitely-ended. Our main result is the following.

\begin{theo}\label{thm:MainIntro}
Let $G$ be a finitely generated infinitely-ended group. Then $\Aut(G)$ is acylindrically hyperbolic.
\end{theo}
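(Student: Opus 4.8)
The plan is to deduce Theorem~\ref{thm:MainIntro} from the more general statement about relatively hyperbolic groups announced in the abstract, via a structure theorem for infinitely-ended groups. Since $G$ is finitely generated with infinitely many ends, Stallings' theorem together with Dunwoody's accessibility theorem present $G$ as the fundamental group of a finite graph of groups with finite edge groups and vertex groups that are either finite or one-ended. Moreover one may take the underlying Bass--Serre tree $T$ to be canonical --- for instance the JSJ tree of $G$ over its finite subgroups, or its tree of cylinders --- so that $T$ is $\Aut(G)$-invariant. Let $\calp = \{P_1, \dots, P_k\}$ be representatives of the conjugacy classes of one-ended vertex groups. Each $P_i$ is finitely generated (being a vertex group of a finite graph-of-groups decomposition of a finitely generated group) and proper (since $G$ is not one-ended), and since all edge stabilizers of $T$ are finite, the combination theorem for relatively hyperbolic groups shows that $G$ is hyperbolic relative to $\calp$ (one may drop the finite, and any virtually cyclic, vertex groups from the peripheral structure). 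Finally $G$ is infinite and not virtually cyclic, as it has infinitely many ends.

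By the general theorem, $\Aut(G,\calp)$ is therefore acylindrically hyperbolic, and it remains to pass to $\Aut(G)$. Because $T$ is canonical, $\Aut(G)$ acts on it, hence permutes the finitely many $G$-conjugacy classes of one-ended vertex groups, i.e.\ the elements of $\calp$. The kernel of this permutation action is a finite-index subgroup of $\Aut(G)$ contained in $\Aut(G,\calp)$; in particular $[\Aut(G):\Aut(G,\calp)] < \infty$. Since acylindrical hyperbolicity passes to finite-index over-groups, $\Aut(G)$ is acylindrically hyperbolic. (When $k = 0$, that is when $G$ is virtually free, this is just the assertion that $\Aut(G) = \Aut(G,\emptyset)$ is acylindrically hyperbolic for the hyperbolic, non-virtually-cyclic group $G$ --- the case containing $\Aut(\mathbb{F}_n)$.)

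The substance of the argument is the relatively hyperbolic input, which I would establish as follows. Note first that $\mathrm{Inn}(G) \cong G/Z(G)$ is a normal subgroup of $\Aut(G,\calp)$, with $Z(G)$ finite because $G$ is acylindrically hyperbolic, and that $G$ itself --- a non-virtually-cyclic relatively hyperbolic group --- is acylindrically hyperbolic, so that $\mathrm{Inn}(G)$ carries a hyperbolically embedded virtually cyclic subgroup $E_G(g) \hookrightarrow_h \mathrm{Inn}(G)$ associated to a loxodromic element $g \in G$. The goal is to upgrade this to a hyperbolically embedded virtually cyclic subgroup of $\Aut(G,\calp)$, which by the criterion of Dahmani--Guirardel--Osin suffices for acylindrical hyperbolicity. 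The natural candidate is the elementary closure of the inner automorphism $\iota_g$ in $\Aut(G,\calp)$; for this closure to be virtually cyclic --- rather than to contain, besides a copy of the centralizer of $g$, a large group of automorphisms fixing $g$ --- one must choose $g$ to be \emph{rigid}, i.e.\ so that the stabilizer of the conjugacy class of $g$ in $\Out(G,\calp)$ is finite. Granting the existence of such a $g$, one combines the rigidity with the normality of $\mathrm{Inn}(G)$ and the hyperbolically embedded subgroup $E_G(g)$ of $\mathrm{Inn}(G)$ --- working throughout with the relative-generating-set characterization of hyperbolically embedded subgroups, since the coned-off Cayley graph witnessing the acylindrical hyperbolicity of $G$ depends on a non-canonical choice of finite generating set and so is not $\Aut(G,\calp)$-invariant --- to build a relative generating set for $\Aut(G,\calp)$ exhibiting the elementary closure of $\iota_g$ as hyperbolically embedded.

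I expect the construction of a rigid loxodromic element $g$ inside an arbitrary non-virtually-cyclic relatively hyperbolic group to be the main obstacle: one needs a loxodromic element whose conjugacy class is moved by every infinite-order outer automorphism, and it is precisely here that the hypothesis that $G$ is not virtually cyclic must be used --- for instance via a small-cancellation or Dehn-filling argument producing conjugacy classes with rigid Whitehead-type combinatorics, or via a direct analysis of the canonical JSJ/peripheral structure of $G$ and of which outer automorphisms can preserve a given conjugacy class. The second delicate point, the equivariance issue just mentioned, I would treat by a general ``promotion'' principle valid for any acylindrically hyperbolic normal subgroup carrying a hyperbolically embedded subgroup of the appropriate kind, so that it is not specific to the present setting.
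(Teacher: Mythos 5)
There are genuine gaps, and they sit exactly at the points your plan treats as routine.

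First, the reduction rests on a false premise: in general there is no $\Aut(G)$-invariant (``canonical'') Stallings--Dunwoody tree. Only the JSJ \emph{deformation space} over finite subgroups is canonical, not any single tree in it; for $G=\mathbb{F}_n$ this deformation space is Outer space, and $\Out(\mathbb{F}_n)$ fixes no point of it (vertex stabilisers of the spine are finite), so no nontrivial $\Aut(\mathbb{F}_n)$-invariant tree with trivial edge stabilisers exists. The tree of cylinders is of no help here, since the edge groups are finite. This matters twice. It invalidates your justification that $\Aut(G,\calp)$ has finite index in $\Aut(G)$ (that conclusion can be rescued without any invariant tree: conjugates of the one-ended vertex groups are exactly the maximal one-ended subgroups of $G$, so in fact $\Aut(G)=\Aut(G,\calp)$ -- which is also needed because ``acylindrical hyperbolicity passes to finite-index over-groups'' is an \emph{open problem}, as discussed after Corollary~\ref{cor:Intro-free-by-cyclic} and in Section~3.4, and cannot be invoked). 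More seriously, it means your reduction to the relatively hyperbolic statement does not gain anything: with $\calp$ the one-ended vertex groups, $G$ still splits over finite subgroups relative to $\calp$, so the case of Theorem~\ref{thm:IntroRHacyl} you need is precisely the one where no invariant JSJ tree is available (the invariant-tree strategy, Proposition~\ref{prop:RH}, only covers the case where $G$ is one-ended relative to $\calp$). The whole difficulty of Theorem~\ref{thm:MainIntro} is thus deferred, untouched, to your final paragraph.

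Second, that final step -- the ``promotion principle'' upgrading $E(g)$ hyperbolically embedded in $\mathrm{Inn}(G)$ to a hyperbolically embedded elementary closure of $\ad_g$ in $\Aut(G,\calp)$, given only that $g$ is rigid -- is not a known lemma and cannot be treated as one. In the generality you state it (a normal acylindrically hyperbolic subgroup $N\trianglelefteq\Gamma$ with a generalised loxodromic $g\in N$ whose conjugacy class has finite stabiliser in $\Gamma/N$ forces $\Gamma$ to be acylindrically hyperbolic), it would immediately imply that acylindrical hyperbolicity passes from a finite-index normal subgroup to the ambient group (when $\Gamma/N$ is finite every $g$ is rigid in your sense), i.e.\ it would settle the open problem just mentioned. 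The obstruction is exactly the one this paper is built to overcome: the space witnessing the WPD/hyperbolically-embedded behaviour of $g$ in $\mathrm{Inn}(G)$ is not $\Aut$-invariant, and no abstract argument controls the geometry of $\Aut(G,\calp)$ relative to the elementary closure of $\ad_g$ from the $G$-geometry alone. The paper replaces this with concrete input: a tree $T$ realising the finite image of $\langle\Stab_{\Aut(G)}(E(g)),\mathrm{Inn}(G)\rangle$ in $\Out(G)$ (Stallings' theorem), the Bestvina--Bromberg--Fujiwara projection complex built from the $\Aut(G)$-orbit of the axis of $g$ in $T$ (Proposition~\ref{prop:SimpleOne}), and, crucially, the persistence of long intersections property (Definition~\ref{de:Persistence}, Proposition~\ref{prop:barrier}), proved by a limiting argument in deformation spaces. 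By contrast, the point you single out as the main obstacle -- producing a rigid loxodromic element -- is the part that is available with existing technology: elements nowhere elliptic in $\calz\calp$-trees exist (Lemma~\ref{lemma:existence-nowhere-elliptic}) and have virtually cyclic fixator by Guirardel--Levitt (Proposition~\ref{prop:nowhere-elliptic-fixator}).
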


\noindent
In particular, denoting by $\mathbb{F}_n$ a free group of rank $n$, we get the following consequence.

\begin{cor}\label{cor:IntroFree}
For every $n \geq 2$, the group $\mathrm{Aut}(\mathbb{F}_n)$ is acylindrically hyperbolic.
\end{cor}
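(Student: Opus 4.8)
The plan is to obtain Corollary~\ref{cor:IntroFree} as an immediate consequence of Theorem~\ref{thm:MainIntro}: it suffices to check that, for $n \geq 2$, the free group $\mathbb{F}_n$ is finitely generated and infinitely-ended. Finite generation is clear, since $\mathbb{F}_n$ is generated by any free basis of size $n$.

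To see that $\mathbb{F}_n$ is infinitely-ended when $n \geq 2$, I would recall the classical computation of the number of ends of a nonabelian free group. Fixing a free basis identifies the Cayley graph of $\mathbb{F}_n$ with the regular tree $T$ of valence $2n \geq 4$; since $T$ has valence at least $3$, removing a finite subtree leaves more than two unbounded complementary components, and passing to the inverse limit one finds that the space of ends of $T$ (hence of $\mathbb{F}_n$) is a Cantor set, in particular infinite. Alternatively, one may invoke Stallings' theorem together with the nontrivial free-product decomposition $\mathbb{F}_n = \mathbb{F}_{n-1} \ast \mathbb{Z}$, which shows that $\mathbb{F}_n$ has more than one end and is not virtually cyclic, hence is infinitely-ended. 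Either way, Theorem~\ref{thm:MainIntro} applies and gives that $\Aut(\mathbb{F}_n)$ is acylindrically hyperbolic.

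There is no genuine obstacle in this argument: all of the work is carried by Theorem~\ref{thm:MainIntro}, and the corollary is a formal consequence once one records the elementary fact that nonabelian free groups are infinitely-ended. The only point worth noting is the exclusion of $n = 1$: the group $\mathbb{F}_1 = \mathbb{Z}$ has exactly two ends, and correspondingly $\Aut(\mathbb{Z}) \cong \mathbb{Z}/2\mathbb{Z}$ is finite and not acylindrically hyperbolic, so the hypothesis $n \geq 2$ is necessary.
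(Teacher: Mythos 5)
Your proposal is correct and matches the paper's approach: the corollary is stated there as an immediate consequence of Theorem~\ref{thm:MainIntro}, the only input being that $\mathbb{F}_n$ is finitely generated and infinitely-ended for $n\ge 2$, which you verify by the standard ends computation. Your remarks on the case $n=1$ are accurate but not needed for the statement as given.
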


\noindent
We would like to make a few comments on these statements. First, in Theorem~\ref{thm:MainIntro}, the assumption that $G$ be finitely generated is important. For instance, the automorphism group $\Aut(\mathbb{F}_\omega)$ of a free group of countably infinite rank fails to be acylindrically hyperbolic. It is not even enough to assume that $G$ splits as a free product of finitely many groups that are either freely indecomposable or isomorphic to $\mathbb{Z}$, see Remark~\ref{rk:finite-generation}.   

\medskip \noindent
Second, acylindrical hyperbolicity of $\mathrm{Out}(\mathbb{F}_n)$ follows from work of Bestvina and Feighn \cite{BF}. But the acylindrical hyperbolicity of $\mathrm{Aut}(\mathbb{F}_n)$ and of $\mathrm{Out}(\mathbb{F}_n)$ do not seem to follow from one another. Our approach is in fact different: the generalised loxodromic elements in our case are inner automorphisms associated to sufficiently filling elements of $\mathbb{F}_n$ -- elements that are not elliptic in any small $\mathbb{F}_n$-action on a real tree; in the case of $\mathrm{Out}(\mathbb{F}_n)$, Bestvina and Feighn proved that fully irreducible outer automorphisms are generalised loxodromic elements. 

\medskip\noindent
From this point of view, we would like to emphasize that the situation is more complicated here in the case of free groups than in case of a closed surface group $\pi_1(\Sigma_g)$ with $g\ge 2$. In the surface setting, passing from $\Out(\pi_1(\Sigma_g))$ to $\Aut(\pi_1(\Sigma_g))$ amounts to considering the mapping class group of a once-punctured surface, which still acts acylindrically on the associated curve graph; in the free group case, to our knowledge, our construction yields the first example of a nonelementary isometric action of $\Aut(\mathbb{F}_n)$ that does not factor through $\mathrm{Out}(\mathbb{F}_n)$ -- some analogues of `punctured curved graphs' fail to be hyperbolic, see \cite{Ham}.

\medskip\noindent 
Notice also that the obvious analogue of Theorem~\ref{thm:MainIntro} for $\Out(G)$ is not true as such: for example, if $G$ splits as $G=A\ast B$, where $A$ and $B$ are freely indecomposable infinite groups with trivial outer automorphism group and trivial center, then $\Out(G)$ is virtually isomorphic to $A\times B$ (as follows from work of Levitt \cite{Lev}), which is not acylindrically hyperbolic. Whether or not $\Out(G)$ is acylindrically hyperbolic when $G$ is finitely generated and splits as a free product with at least three factors is an open question to our knowledge.

\medskip\noindent
Combined with \cite{MR4011668}, it follows from Theorem \ref{thm:MainIntro} that the automorphism group of any nonelementary hyperbolic group is acylindrically hyperbolic. Actually, the arguments from \cite{MR4011668} can be extended to one-ended relatively hyperbolic groups (alternatively, see Proposition~\ref{prop:RH} below). In the case of a group $G$ which is hyperbolic relative to a finite collection $\mathcal{P}$ of finitely generated subgroups, a natural subgroup  of $\Aut(G)$ to consider is the group $\Aut(G,\mathcal{P})$ made of all automorphisms sending every subgroup in $\mathcal{P}$ to a conjugate of a subgroup in $\mathcal{P}$. We establish the following theorem, which refines Theorem~\ref{thm:MainIntro}. 

\begin{thm}\label{thm:IntroRHacyl}
Let $G$ be a group which is hyperbolic relative to a finite collection $\mathcal{P}$ of finitely generated subgroups. Assume that the pair $(G, \mathcal{P})$ is nonelementary (i.e.\ $G$ is not virtually cyclic, and all subgroups in $\mathcal{P}$ are proper). Then $\Aut(G,\mathcal{P})$ is acylindrically hyperbolic.

\noindent If in addition no subgroup in $\mathcal{P}$ is relatively hyperbolic, then $\Aut(G)$ is acylindrically hyperbolic.
\end{thm}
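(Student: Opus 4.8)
The plan is to prove Theorem~\ref{thm:IntroRHacyl} by producing, for the pair $(G,\calp)$, an action of $\Aut(G,\calp)$ on a hyperbolic space witnessing acylindrical hyperbolicity, and the natural candidate is an action built from the relative Cayley graph (or a cone-off / coned-off Cayley graph) of $G$ with respect to $\calp$, together with a suitable generalized loxodromic element. Recall that $G$ hyperbolic relative to $\calp$ acts on such a space, and one expects inner automorphisms $\ad_g$ for $g$ a hyperbolic element of $G$ (one not conjugate into any $P\in\calp$) to act loxodromically. The first step is to set up the right hyperbolic space: I would take a connected hyperbolic graph on which $G$ acts, with $\calp$ the (conjugacy classes of) maximal parabolic subgroups, such that the action of $\Aut(G,\calp)$ on $G$ extends to an action on this graph — this is where we use that automorphisms in $\Aut(G,\calp)$ permute the conjugates of subgroups in $\calp$ and hence preserve the relevant coarse structure. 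The cleanest route is to quote the machinery already in the paper (presumably a criterion for acylindrical hyperbolicity in terms of a WPD loxodromic element, or the Dahmani--Guirardel--Osin \cite{DGO} characterization via hyperbolically embedded subgroups) and verify its hypotheses.

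Second, the heart of the argument is to find a \emph{well-chosen} loxodromic element. Following the strategy announced in the introduction for free groups, I would look for an element $g\in G$ that is ``sufficiently filling'': not elliptic in any of the relevant splittings / actions on trees associated with the relative structure, so that its image under any automorphism in $\Aut(G,\calp)$ remains uniformly controlled. Concretely, I would first establish that $\Aut(G,\calp)$ acts on the hyperbolic space with $\ad_g$ loxodromic, then upgrade this to a WPD condition: the key point is that an automorphism $\Phi$ approximately fixing a long segment of the axis of $\ad_g$ must send $g$ to something close to a conjugate of $g$, which — combined with finiteness properties coming from finite generation of $G$ and of the $P\in\calp$, plus malnormality-type properties of parabolics in relatively hyperbolic groups — forces $\Phi$ to lie in a finite-index controlled set. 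For the second, stronger conclusion (``$\Aut(G)$ is acylindrically hyperbolic'' when no $P\in\calp$ is itself relatively hyperbolic), the extra ingredient is that in that case $\calp$ is canonical: every automorphism of $G$ automatically permutes the conjugacy classes of the $P$'s (this is the standard fact that the peripheral structure is intrinsic when no peripheral subgroup splits as a relatively hyperbolic group — one invokes the relevant result on canonicity of relatively hyperbolic structures), so $\Aut(G)=\Aut(G,\calp)$ and the first part applies verbatim.

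I expect the main obstacle to be the WPD / acylindricity verification, i.e.\ controlling the stabilizer (up to finite error) of a long segment of the axis of $\ad_g$ inside $\Aut(G,\calp)$, rather than inside $G$ itself. One has good control for inner automorphisms via properties of hyperbolic elements and parabolics in $G$, but a general automorphism that coarsely fixes the segment need not be inner, and ruling out a large ``outer'' contribution is exactly the subtle point; here I anticipate using a rigidity statement — something like: if $\Phi(g)$ and $g$ have the same (or boundedly close) translation data on the hyperbolic space, and $g$ is filling enough, then $\Phi$ differs from an inner automorphism by an element of a finite group — whose proof will rely on the finite generation hypotheses and on the structure theory of relatively hyperbolic groups. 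The remaining steps (hyperbolicity of the chosen space, non-elementarity of the action, loxodromicity of $\ad_g$) I expect to be comparatively routine, either classical or already available in the earlier sections of the paper, so I would state them and cite accordingly rather than reprove them.
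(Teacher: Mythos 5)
Your plan has a genuine gap at its very first step: you assume that the $G$-action on a coned-off (relative) Cayley graph extends to an isometric action of $\Aut(G,\calp)$ because automorphisms in $\Aut(G,\calp)$ ``preserve the relevant coarse structure''. Preserving the peripheral structure only makes an automorphism act as a quasi-isometry of that graph, with constants depending on the automorphism (on the word lengths of the images of generators); it does not produce an isometric, or even uniformly quasi-isometric, action of $\Aut(G,\calp)$ on any fixed hyperbolic space extending the $\mathrm{Inn}(G)$-action. This is exactly the obstruction the paper is built to circumvent: in general there is no canonical hyperbolic space attached to $(G,\calp)$ on which all of $\Aut(G,\calp)$ acts (the introduction notes that the natural ``punctured'' analogues of curve graphs for free groups fail to be hyperbolic, and that the construction here gives the first nonelementary action of $\Aut(\mathbb{F}_n)$ not factoring through $\Out(\mathbb{F}_n)$). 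Instead of one space, the paper runs the Bestvina--Bromberg--Fujiwara projection-complex machinery (Theorem~\ref{thm:BBF}, packaged as Proposition~\ref{prop:SimpleOne}) on a family of lines $Y_\varphi$ indexed by automorphisms, all living inside a fixed simplicial $G$-tree; the serious work is the persistence of long intersections property for an element $g$ that is nowhere elliptic in $\calz\calp$-trees, proved by a limiting argument in deformation spaces of $G$-trees (Section~\ref{sec:infinitely-ended}). The ``rigidity statement'' you defer to (an automorphism with the same translation data as $g$, for $g$ filling, is virtually inner) does exist --- it is Guirardel--Levitt's Proposition~\ref{prop:nowhere-elliptic-fixator}, used in the paper for the elementary fixator condition --- but it only controls $\Fix_{\Aut(G)}(g)$ and cannot substitute for the missing isometric action, nor for the projection axioms and acylindricity verifications.

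Two further points. The paper's proof is in fact a case split which your proposal does not engage with: for $G$ infinitely-ended it invokes the main Theorem~\ref{thm:AutEnds} (via Stallings splittings, JSJ deformation spaces over finite subgroups, and the machinery above) together with Lemma~\ref{lemma:acyl-hyp-subgroup}; for $G$ one-ended relative to $\calp$ it uses a canonical $\Aut(G,\calp)$-invariant JSJ tree over $\calp$-elementary subgroups on which the $G$-action is acylindrical (Proposition~\ref{prop:RH}) --- this is the only setting where an invariant space genuinely exists, and even there extra cases (rigid or flexible vertex when the action is trivial) must be handled. Finally, for the last assertion your claim that ``no $P\in\calp$ relatively hyperbolic'' gives $\Aut(G)=\Aut(G,\calp)$ is not quite right: finite peripheral subgroups need not be sent to conjugates of peripherals, and the paper instead passes to the collection $\calp'$ obtained by discarding the finite subgroups, using $\Aut(G)=\Aut(G,\calp')$ by \cite[Lemma~3.2]{MO2} and then applying the first part to $(G,\calp')$.
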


\noindent The additional part of the statement follows from the fact that when no subgroup in $\mathcal{P}$ is relatively hyperbolic, then $\Aut(G)=\Aut(G,\calp')$, where $\calp'$ is obtained from $\calp$ by removing all finite groups.

\medskip\noindent
Theorem~\ref{thm:IntroRHacyl} covers a large class of interesting groups. Besides hyperbolic groups, examples of relatively hyperbolic groups include fundamental groups of complete Riemannian manifolds of finite volume with pinched negative sectional curvature \cite{MR1650094}, some mapping tori of surface groups \cite[Proposition~6.2]{MR2262721}, \cite[Theorem~4.9]{MR2421140}, \cite{MR3572767}, fundamental groups of closed 3-manifolds which are not graph manifolds \cite[Corollary~E]{MR3079269}, toral relatively hyperbolic groups such as limit groups \cite{MR2131400, Dah}, free-by-cyclic groups $\mathbb{F}_n \rtimes_\varphi \mathbb{Z}$ where $n \geq 2$ and $\varphi$ has exponential growth (see the discussion after the statement of Corollary~\ref{cor:Intro-free-by-cyclic}), small cancellation quotients \cite{MR1706019, MR3353035, GruberThesis}, some graph products of groups \cite{Qm} (including some right-angled Coxeter groups \cite{MR3623669, ConingOff}), some graph braid groups \cite{GraphBraid}. 

\medskip \noindent
As an application of our work, we obtain information on the geometry of free-by-cyclic groups. More generally, combining Theorem~\ref{thm:MainIntro} with \cite[Theorem~1.5]{MR4011668}, we get the following statement.

\begin{cor}\label{cor:IntroExtensionFree}
Let $G$ be a finitely generated infinitely-ended group, let $H$ be a group and let $\varphi : H \to \mathrm{Aut}(G)$ be a homomorphism. The semidirect product $G \rtimes_\varphi H$ is acylindrically hyperbolic if and only if
$$\mathrm{ker} \left( H \overset{\varphi}{\to} \mathrm{Aut}(G) \to \mathrm{Out}(G) \right)$$
is a finite subgroup of $H$.
\end{cor}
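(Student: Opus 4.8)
The strategy is to deduce the statement from the general criterion \cite[Theorem~1.5]{MR4011668} by feeding it the action produced in the proof of Theorem~\ref{thm:MainIntro}. Recall that \cite[Theorem~1.5]{MR4011668} determines, for a group $G$ such that $\mathrm{Aut}(G)$ admits a non-elementary acylindrical action on a hyperbolic space for which some inner automorphism of $G$ is a loxodromic element, exactly when a semidirect product $G \rtimes_\varphi H$ is acylindrically hyperbolic, the answer being precisely that $\ker(H \to \mathrm{Out}(G))$ is finite. So the only thing to check is that this hypothesis on $G$ holds whenever $G$ is finitely generated and infinitely-ended, and this is exactly what the proof of Theorem~\ref{thm:MainIntro} delivers: it exhibits a non-elementary acylindrical action of $\mathrm{Aut}(G)$ on a hyperbolic space whose generalised loxodromic elements include the inner automorphisms $\mathrm{ad}(g)$ associated to sufficiently filling $g \in G$. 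Granting this, the corollary is immediate from \cite[Theorem~1.5]{MR4011668}; the remaining paragraphs indicate why the criterion holds in each direction, since these are the two things one would otherwise want to see spelled out.

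For sufficiency, suppose $K := \ker(H \to \mathrm{Out}(G))$ is finite. When $H$ is finite this is automatic, and $G \rtimes_\varphi H$ is acylindrically hyperbolic simply because it contains $G$ with finite index and $G$ is acylindrically hyperbolic (as is every finitely generated infinitely-ended group, via the action on the Bass--Serre tree of a Stallings splitting). When $H$ is infinite, $H/K$ embeds in $\mathrm{Out}(G)$, and one combines the non-elementary acylindrical action of $\mathrm{Aut}(G)$ with its loxodromic inner automorphism to build a non-elementary acylindrical action of $G \rtimes_\varphi H$ on a hyperbolic space; this is the heart of \cite[Theorem~1.5]{MR4011668}, and it applies verbatim once Theorem~\ref{thm:MainIntro} supplies the action on $\mathrm{Aut}(G)$.

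For necessity, suppose $K$ is infinite; we show $G \rtimes_\varphi H$ is not acylindrically hyperbolic. Each $k \in K$ acts on $G$ as conjugation by an element $g_k \in G$, well-defined modulo the centre $Z(G)$, which is finite because $G$ is finitely generated and infinitely-ended. Set $Q := G \rtimes_\varphi H$, $\overline{Q} := Q/Z(G)$ and $\overline{G} := G/Z(G)$. A direct computation shows that the map sending $k$ to the class of $(g_k^{-1}, k)$ in $\overline{Q}$ is a well-defined injective homomorphism from $K$ into the centraliser $C_{\overline{Q}}(\overline{G})$; in particular $C_{\overline{Q}}(\overline{G})$ is infinite. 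Now $\overline{G}$ is an infinite normal subgroup of $\overline{Q}$, and $C_{\overline{Q}}(\overline{G})$ is normal in $\overline{Q}$. If $\overline{Q}$ were acylindrically hyperbolic, then $\overline{G}$, being infinite and normal, would contain a generalised loxodromic element of $\overline{Q}$, whose centraliser is virtually cyclic; hence $C_{\overline{Q}}(\overline{G})$, contained in that centraliser, would be an infinite virtually cyclic \emph{normal} subgroup of the acylindrically hyperbolic group $\overline{Q}$ --- impossible, since infinite normal subgroups of acylindrically hyperbolic groups are themselves acylindrically hyperbolic. Thus $\overline{Q}$ is not acylindrically hyperbolic, and neither is $Q$, which differs from it only by the finite normal subgroup $Z(G)$.

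The genuinely substantial input is the one already secured by Theorem~\ref{thm:MainIntro}: that the acylindrical action of $\mathrm{Aut}(G)$ can be chosen so that an inner automorphism acts loxodromically with the WPD property, which is exactly what makes \cite[Theorem~1.5]{MR4011668} applicable. Beyond that, the only care needed is the bookkeeping around the finite centre $Z(G)$ in the necessity direction, which is routine.
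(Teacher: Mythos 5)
Your overall route is the same as the paper's: the corollary is obtained by combining Theorem~\ref{thm:MainIntro} (more precisely, the fact recorded after Theorem~\ref{thm:AutEnds} that $\ad_g$ is a generalised loxodromic element of $\Aut(G)$ for $g$ nowhere elliptic in $\mathcal{ZP}$-trees) with \cite[Theorem~1.5]{MR4011668}, and your supplementary sketch of the ``only if'' direction is correct: the finiteness of $Z(G)$, the embedding of the kernel into the centraliser of $G/Z(G)$ inside $(G\rtimes_\varphi H)/Z(G)$, and the contradiction between this centraliser being an infinite virtually cyclic normal subgroup and the fact that infinite normal subgroups of acylindrically hyperbolic groups act nonelementarily, are all sound, as is the reduction modulo the finite normal subgroup $Z(G)$.

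There is, however, one genuinely wrong step in your sufficiency sketch: for finite $H$ you assert that $G\rtimes_\varphi H$ is acylindrically hyperbolic ``simply because it contains $G$ with finite index and $G$ is acylindrically hyperbolic''. Whether acylindrical hyperbolicity passes from a finite-index subgroup to the ambient group is an open problem, which this very paper emphasises twice (after Corollary~\ref{cor:Intro-free-by-cyclic} and in the discussion preceding Proposition~\ref{prop:SimpleTwo}, citing \cite{MR3968890}), so this inference is not available. The gap is easy to close, and in fact the case split is unnecessary: the homomorphism $G\rtimes_\varphi H\to\Aut(G)$, $(g,h)\mapsto \ad_g\circ\varphi(h)$, has finite kernel as soon as $Z(G)$ and $\mathrm{ker}(H\to\Out(G))$ are finite, and its image contains $\mathrm{Inn}(G)$, hence acts nonelementarily (by \cite[Lemma~7.2]{OsinAcyl}, as in Lemma~\ref{lemma:acyl-hyp-subgroup}) and acylindrically on the space provided by Theorem~\ref{thm:MainIntro}; pulling this action back through the finite kernel treats all $H$ at once, and is in substance what \cite[Theorem~1.5]{MR4011668} does. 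Alternatively, for finite $H$ one can simply note that $G\rtimes_\varphi H$ is itself a finitely generated infinitely-ended group (the number of ends is a commensurability invariant), so Theorem~\ref{thm:MainIntro} applies to it directly; but the finite-index-overgroup principle as you stated it cannot be invoked.
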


\noindent
In the particular case where $G$ is free and $H$ is infinite cyclic, this can be reformulated as follows.

\begin{cor}\label{cor:Intro-free-by-cyclic}
Let $n\ge 2$, and let $\varphi\in\Aut(\mathbb{F}_n)$. Then $\mathbb{F}_n \rtimes_\varphi \mathbb{Z}$ is acylindrically hyperbolic if and only if the image of $\varphi$ in $\Out(\mathbb{F}_n)$ has infinite order.
\end{cor}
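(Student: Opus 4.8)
The plan is to deduce the statement directly from Corollary~\ref{cor:IntroExtensionFree}. First I would record that for every $n\ge 2$ the free group $\mathbb{F}_n$ is finitely generated and infinitely-ended: it is a nontrivial free product of (more than one) infinite factor, hence has infinitely many ends by Stallings' theorem, so in particular it is not virtually cyclic and not one-ended. Thus Corollary~\ref{cor:IntroExtensionFree} applies with $G=\mathbb{F}_n$, with $H=\mathbb{Z}$, and with $\psi\colon\mathbb{Z}\to\Aut(\mathbb{F}_n)$ the homomorphism sending a fixed generator of $\mathbb{Z}$ to $\varphi$. That corollary then says that $\mathbb{F}_n\rtimes_\varphi\mathbb{Z}$ is acylindrically hyperbolic if and only if the kernel $K$ of the composite $\mathbb{Z}\overset{\psi}{\to}\Aut(\mathbb{F}_n)\to\Out(\mathbb{F}_n)$ is a finite subgroup of $\mathbb{Z}$.

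Next I would identify this kernel and observe the elementary upgrade from ``finite'' to ``trivial''. Concretely, $K=\{k\in\mathbb{Z}\colon \varphi^k\in\mathrm{Inn}(\mathbb{F}_n)\}$ is a subgroup of $\mathbb{Z}$, hence $K=d\mathbb{Z}$ for a unique integer $d\ge 0$; such a subgroup is finite precisely when $d=0$, i.e.\ precisely when $K$ is trivial. Since $K$ is trivial exactly when $\psi$ induces an \emph{injection} $\mathbb{Z}\hookrightarrow\Out(\mathbb{F}_n)$, this happens exactly when the image of $\varphi$ in $\Out(\mathbb{F}_n)$ has infinite order. Chaining the two equivalences yields the claim: $\mathbb{F}_n\rtimes_\varphi\mathbb{Z}$ is acylindrically hyperbolic $\iff$ $K$ is finite $\iff$ $K$ is trivial $\iff$ the image of $\varphi$ in $\Out(\mathbb{F}_n)$ has infinite order.

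I do not anticipate a genuine obstacle: the whole mathematical content is already packaged in Corollary~\ref{cor:IntroExtensionFree} (and hence, ultimately, in Theorem~\ref{thm:MainIntro} together with \cite[Theorem~1.5]{MR4011668}), and the remaining work is just the two routine observations above --- that $\mathbb{F}_n$ is infinitely-ended for $n\ge 2$, and that a subgroup of $\mathbb{Z}$ is finite if and only if it is trivial. The only point I would be careful to make explicit for the reader is precisely this last upgrade, since it is what licenses the clean reformulation ``$\varphi$ has infinite order in $\Out(\mathbb{F}_n)$'' in place of the a priori weaker finiteness condition coming from Corollary~\ref{cor:IntroExtensionFree}.
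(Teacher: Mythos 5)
Your proposal is correct and matches the paper's intended deduction: Corollary~\ref{cor:Intro-free-by-cyclic} is obtained there precisely by specialising Corollary~\ref{cor:IntroExtensionFree} to $G=\mathbb{F}_n$ (infinitely-ended for $n\ge 2$) and $H=\mathbb{Z}$, using that a subgroup of $\mathbb{Z}$ is finite if and only if it is trivial. Nothing further is needed.
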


\noindent Corollary~\ref{cor:Intro-free-by-cyclic} fits in a long sequence of results regarding the geometry of free-by-cyclic groups. A theorem of Brinkmann \cite{Bri} based on work of Bestvina and Feighn \cite{BF-comb} asserts that $\mathbb{F}_n\rtimes_\varphi\mathbb{Z}$ is hyperbolic if and only if $\varphi$ is atoroidal (i.e.\ no nontrivial power of $\varphi$ preserves a conjugacy class in $\mathbb{F}_n$). Also $\mathbb{F}_n\rtimes_\varphi\mathbb{Z}$ is relatively hyperbolic if and only if $\varphi$ has exponential growth \cite{GauteroLustig,Gho,DL,Mac,Hag}, and in this case it is hyperbolic relative to the mapping torus of the collection of maximal polynomially growing subgroups of $\mathbb{F}_n$ for the outer class of $\varphi$. Finally, it was proved in \cite[Corollary 4.3]{MR3548122} that $\mathbb{F}_n\rtimes_\varphi\mathbb{Z}$ is virtually acylindrically hyperbolic if and only if the image of $\varphi$ in $\Out(\mathbb{F}_n)$ has infinite order. But to our knowledge, it is currently unknown whether virtually acylindrically hyperbolic groups are acylindrically hyperbolic (see \cite{MR3968890}), so Corollary~\ref{cor:Intro-free-by-cyclic} is new to our knowledge.

\paragraph*{A word on the proof.} Our proof of Theorem~\ref{thm:MainIntro} relies on a celebrated construction of \emph{projection complexes} due to Bestvina, Bromberg and Fujiwara \cite{BBF}. This requires having a collection $\mathbb{Y}$ of metric spaces on which $\Aut(G)$ acts (preserving the metrics), together with projection maps between these spaces satisfying certain axioms -- most importantly, a version of the Behrstock inequality from \cite{Beh}, and the fact that there are only finitely many large projections between any two distinct spaces $X,Z\in\mathbb{Y}$. From that, Bestvina, Bromberg and Fujiwara build an action of $\Aut(G)$ on a \emph{projection complex} $\mathcal{P}$ which is a quasi-tree, and under a good control on $\Aut(G)$-stabilisers of collections of spaces in $\mathbb{Y}$ one can deduce that the $\Aut(G)$-action on $\mathcal{P}$ is acylindrical \cite{BBFS}.

\medskip\noindent Let us now describe the collection $\mathbb{Y}$ of spaces we work with in the case where $G=\mathbb{F}_n$ with $n\ge 2$. The group $\Aut(\mathbb{F}_n)$ acts on the (unprojectivized) \emph{Auter space}: this is the space of $\mathbb{F}_n$-equivariant isometry classes of basepointed free simplicial minimal actions of $\mathbb{F}_n$ on simplicial metric trees, and $\Aut(\mathbb{F}_n)$ acts by precomposition of the action. Let $g\in G$ be a generic element: precisely, we require that $g$ is not elliptic in any real $\mathbb{F}_n$-tree with cyclic arc stabilisers -- those contain all trees that appear in the Culler--Morgan compactification of Outer space. We fix a Cayley tree $S_0$ of $\mathbb{F}_n$, and let $A_g$ be the subspace of Auter space made of all basepointed trees $(S_0,p)$ with $p$ varying along the axis of $g$ in $S_0$. Now our collection $\mathbb{Y}$ consists of the $\Aut(\mathbb{F}_n)$-orbit of $A_g$, a collection of subspaces of Auter space. We then define a notion of projections between these various subspaces. Understanding these projections amounts to understanding the closest-point projections within the fixed Cayley tree $S_0$ between the axes of the elements $\varphi(g)$ with $\varphi$ varying in $\Aut(G)$, and we check the Bestvina--Bromberg--Fujiwara axioms there. A technical tool that turns out to be crucial in this analysis is a \emph{persistence of long intersections} property, which essentially amounts to saying that if an element $h\in\mathbb{F}_n$ contains a high power of $g$ as a subword, then for every $\varphi\in\Aut(\mathbb{F}_n)$, the element $\varphi(h)$ contains a high power of $\varphi(g)$. Geometrically, if the axes of $g$ and $h$ have a long intersection in a Cayley tree of $\mathbb{F}_n$, then for every automorphism $\varphi\in\Aut(\mathbb{F}_n)$, the same is true for $\varphi(g)$ and $\varphi(h)$. Here it is crucial that the element $g$ be sufficiently generic: if $g$ were a basis element, one could undo a high power of $g$ by choosing for $\varphi$ a high power of a Dehn twist automorphism. The proof of this property for elements $g$ as above uses a limiting argument where we show that if the property fails, the element $g$ would have to become elliptic in a tree in the boundary of Outer space.

\paragraph*{Organization of the paper.} In Section~\ref{sec:prelim}, we review basic notions concerning group actions on trees, as well as the Bestvina--Bromberg--Fujiwara construction. We also establish some easy consequences of having a WPD action on a tree that are important in the sequel. In Section~\ref{sec:criterion}, we give a general abstract criterion that ensures that $\Aut(G)$ is acylindrically hyperbolic, from a $G$-action on a simplicial tree $T$ with a control on overlaps between axes of sufficiently generic elements. In Section~\ref{sec:rh-one-ended}, we apply this criterion to a JSJ splitting to establish Theorem~\ref{thm:IntroRHacyl} when $G$ is a one-ended relatively hyperbolic group. In Section~\ref{sec:infinitely-ended}, we prove our main theorem showing the acylindrical hyperbolicity of automorphism groups of finitely generated infinitely-ended groups. Finally, in Section~\ref{sec:rh}, we combine the results from the previous two sections to prove Theorem~\ref{thm:IntroRHacyl} when $G$ is any relatively hyperbolic group, without any assumption on the number of ends.  

\paragraph*{Acknowledgments.} We are very grateful to the anonymous referee for many comments and suggestions that significantly improved the exposition of the paper. The first named author was supported by a public grant as part of the Fondation Math\'ematique Jacques Hadamard. The second named author acknowledges support from the Agence Nationale de la Recherche under Grant ANR-16-CE40-0006 DAGGER.

\section{Preliminaries}\label{sec:prelim}

\subsection{Group actions on trees}

\noindent Let $G$ be a group. A \emph{$G$-tree} is a real tree $T$ equipped with an isometric $G$-action. Given a $G$-tree $T$ and an element $g\in G$, either $g$ fixes a point in $T$ (in which case $g$ is said to be \emph{elliptic} in $T$), or else there is a (unique) subspace $\ell$ of $T$ that is homeomorphic to a line and left invariant by $g$, and $g$ acts on $\ell$ by translation (in this case $g$ is said to be \emph{loxodromic} in $T$, and $\ell$ is called the \emph{axis} of $g$ in $T$, denoted by $\mathrm{Axis}_T(g)$). The \emph{characteristic subset} of $g$, denoted by $\mathrm{Char}_T(g)$, is either its fixed point set $\Fix_T(g)$ if $g$ is elliptic, or its axis if $g$ is loxodromic. The \emph{translation length} of $g$ in $T$ is defined as $||g||_T=\inf_{x\in T}d(x,gx)$. This infimum is always achieved: one has $||g||_T=0$ if and only if $g$ is elliptic, and otherwise $||g||_T$ is also equal to the amount of translation of $g$ on its axis. 

\medskip\noindent A $G$-tree $T$ is \emph{nonelementary} if $G$ contains two elements acting loxodromically on $T$ and whose axes in $T$ intersect in a compact (possibly empty) subset. A $G$-tree $T$ is \emph{minimal} if it does not contain any proper nonempty $G$-invariant subtree. As established in the proof of \cite[Proposition~3.1]{CM}, a nonelementary minimal $G$-tree is always equal to the union of all axes of loxodromic elements. 

\medskip\noindent Throughout the paper, all simplicial trees are equipped with the simplicial metric where all edges are assigned length $1$. 

\medskip\noindent We now record some general lemmas about group actions on trees that will be useful in the paper.

\begin{lemma}\label{lem:center}
Let $G$ be a group, and let $T$ be a $G$-tree. Assume that the $G$-action on $T$ is minimal and nonelementary. Let $\iota\in\Isom(T)$ be an isometry which commutes with the $G$-action, i.e.\ for every $x\in T$ and every $g\in G$, one has $\iota(gx)=g\iota(x)$. Then $\iota=\mathrm{id}$.

\medskip\noindent In particular, the center $Z(G)$ of $G$ lies in the kernel of the $G$-action on $T$.
\end{lemma}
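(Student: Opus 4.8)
The plan is to exploit the fact that, although $\iota$ need not belong to $G$, it commutes with the isometry of $T$ induced by every $g\in G$, and therefore must preserve all the canonical geometric data attached to loxodromic elements. Since the $G$-action is minimal and nonelementary, $T$ is the union of the axes of its loxodromic elements (by the proof of \cite[Proposition~3.1]{CM}, recalled above); hence it suffices to prove that $\iota$ fixes $\Axis_T(g)$ pointwise for every loxodromic $g\in G$.

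I would first record two elementary observations. \emph{(a)} For every loxodromic $g\in G$ one has $\iota(\Axis_T(g))=\Axis_T(\iota g\iota^{-1})=\Axis_T(g)$, because $\iota g\iota^{-1}=g$ as isometries of $T$; thus $\iota$ preserves every axis, and its restriction to the line $\Axis_T(g)$ is an isometry of $\mathbb{R}$. \emph{(b)} If $\iota$ fixes a single point $z\in\Axis_T(g)$, then it fixes $\Axis_T(g)$ pointwise: indeed $\iota(g^nz)=g^n\iota(z)=g^nz$ for all $n\in\mathbb{Z}$, and since $gz\neq z$ this gives at least two fixed points on the line $\Axis_T(g)$, forcing $\iota|_{\Axis_T(g)}=\mathrm{id}$. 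In particular, $\iota|_{\Axis_T(g)}$ is either the identity or a fixed-point-free isometry of $\mathbb{R}$, i.e.\ a nontrivial translation.

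Next I would locate a first fixed point using nonelementarity: choose loxodromic $a,b\in G$ whose axes meet in a compact (possibly empty) subset $I$. If $I\neq\emptyset$, then $\iota$ preserves $I$ (it preserves both axes), and since a nontrivial translation of a line cannot map a nonempty bounded set to itself, observation (b) forces $\iota|_{\Axis_T(a)}=\mathrm{id}$. If $I=\emptyset$, I would use the unique bridge $[a_0,b_0]$ between the two lines, with $a_0\in\Axis_T(a)$, $b_0\in\Axis_T(b)$, $a_0\neq b_0$: as $\iota$ preserves each of the two lines it preserves this canonically determined bridge, and since $\iota(a_0)$ lies on $\Axis_T(a)$ whereas $b_0$ does not, $\iota$ fixes $a_0$, so (b) again gives $\iota|_{\Axis_T(a)}=\mathrm{id}$; symmetrically $\iota|_{\Axis_T(b)}=\mathrm{id}$. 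To propagate this to an arbitrary loxodromic $g\in G$: if $\Axis_T(g)\cap\Axis_T(a)\neq\emptyset$, any common point is fixed by $\iota$ and (b) applies; if $\Axis_T(g)\cap\Axis_T(a)=\emptyset$, the same bridge argument, now using that $\iota$ preserves $\Axis_T(g)$ and fixes $\Axis_T(a)$ pointwise, shows $\iota$ fixes the endpoint of the bridge lying on $\Axis_T(g)$, and (b) concludes.

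Hence $\iota$ fixes pointwise every axis of every loxodromic element of $G$, so $\iota=\mathrm{id}$ on $T=\bigcup_g\Axis_T(g)$. The final ``in particular'' is then immediate: any $z\in Z(G)$ induces an isometry of $T$ commuting with the $G$-action, hence acts as the identity on $T$, i.e.\ lies in the kernel of the action. The only (very mild) obstacle is the case of disjoint axes, which relies on the standard fact that two disjoint geodesic lines in a real tree are joined by a unique bridge with canonically determined endpoints, so that any isometry preserving both lines preserves the bridge; everything else is routine manipulation of isometries of $\mathbb{R}$.
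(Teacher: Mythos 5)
Your argument is correct and follows essentially the same strategy as the paper's proof: show that $\iota$ preserves the axis of every loxodromic element, produce a fixed point on each such axis by exploiting nonelementarity (compact intersections, bridges and projections between disjoint axes), and conclude using the fact that a minimal nonelementary $G$-tree is the union of the axes of its loxodromic elements. The only differences are minor: you exclude a nontrivial action of $\iota$ on an axis by exhibiting two fixed points via equivariance (rather than by observing that a reflection cannot commute with the translation by the loxodromic element), and you propagate from the single pair of axes furnished by the definition of nonelementarity instead of invoking \cite[Lemma~2.1]{CM} to produce a disjoint axis for every loxodromic element; both routes are sound.
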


\begin{proof}
The last assertion of the lemma follows from the first, since every element of $Z(G)$ determines an isometry of $T$ that commutes with the $G$-action. We thus focus on proving the first part of the lemma.

\medskip\noindent We claim that $\iota$ fixes the axis of every element $h\in G$ acting loxodromically on $T$ pointwise. Since the $G$-action on $T$ is minimal, as recalled above, the tree $T$ is equal to the union of all axes of loxodromic elements, so this claim will be enough to conclude. 

\medskip\noindent We now prove the above claim. Observe that $\iota$ sends the axis of every loxodromic element $h \in G$ to the axis of $\iota h \iota^{-1}=h$. Therefore, $\iota$ stabilises the axis of $h$. Moreover, because a translation and a reflection on a line do not commute, we know that $\iota$ acts on the axis of $h$ as a translation. Next, as the $G$-action on $T$ is nonelementary, for every element $h\in G$ acting loxodromically on $T$, we can find an element $h'\in G$ which acts loxodromically on $T$ and whose axis is disjoint from the axis of $h$ \cite[Lemma~2.1]{CM}. Then $g$ fixes the closest-point projection of the axis of $h'$ onto the axis of $h$ (which consists of a single point). As $\iota$ acts by translation on $\Axis_T(h)$, this implies that $\iota$ fixes $\Axis_T(h)$ pointwise, thus proving our claim.
\end{proof}

\noindent 
A subset of a tree $T$ is said to be \emph{nondegenerate} if it contains more than one point, and \emph{degenerate} otherwise. Given two subtrees $I,J\subseteq T$ with degenerate intersection, the \emph{bridge} between $I$ and $J$ is the shortest closed segment of $T$ (possibly reduced to a single point) that intersects both $I$ and $J$. 

\begin{lemma}[{Paulin \cite[Proposition~1.6(1)]{Pau}}]\label{lemma:culler-morgan}
Let $G$ be a group, and let $T$ be a $G$-tree. Let $g,h\in G$ be two elements whose characteristic sets in $T$ are disjoint. Then $gh$ is loxodromic in $T$, and the bridge between $\Char_T(g)$ and $\Char_T(h)$ is contained in $\Axis_T(gh)$.
\end{lemma}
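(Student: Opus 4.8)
The plan is to argue by a direct ping-pong / fundamental-domain analysis on the tree $T$. First I would set $I=\Char_T(g)$ and $J=\Char_T(h)$, note that by hypothesis $I\cap J$ is empty (the characteristic sets are disjoint, in particular they do not intersect), and let $[p,q]$ denote the bridge between $I$ and $J$, where $p\in I$ and $q\in J$ are the unique nearest points; write $\ell=d_T(p,q)>0$ for its length. The first step is to record the two basic ``projection'' facts about trees: (a) since $g$ stabilises $I$, every point of $T$ whose geodesic to $I$ passes through $p$ is moved by $g$ to a point whose geodesic to $I$ passes through $gp\in I$, and similarly the segment from any point $x\notin I$ to $gx$ must traverse $p$ then $gp$ if $g$ is loxodromic, or simply pass through $I$ if $g$ is elliptic; (b) the analogous statement for $h$ and $q$. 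The key combinatorial consequence I want is that $g$ maps the point $q$ (which lies off $I$, on the $J$-side) to a point $gq$ with $d_T(gq, q) \ge d_T(gq,p)+d_T(p,q) = d_T(p,gp)+2\ell$ when $g$ is loxodromic, and $d_T(gq,q)\ge 2\ell$ when $g$ is elliptic; in either case the geodesic $[q,gq]$ contains $[p,q]$ and $g\cdot[q,p]=[gq,gp]$, with these two segments meeting $I$ only at $p$ and $gp$ respectively.

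The second step is the ping-pong estimate for $gh$. I would track the orbit of the point $q$. We have $h q = q$ if $h$ is elliptic (then $q\in J=\Fix_T(h)$), or $hq$ lies on $\Axis_T(h)=J$ with $[q,hq]\subseteq J$; in all cases $[q,hq]$ meets the bridge only in $q$. Therefore the concatenation $[gp,p]\cup[p,q]\cup[q,hq]$ is a geodesic from $gp$ to $hq$ (the three pieces leave $I$, cross the bridge, and enter/run along $J$, with no backtracking because $I\cap[p,q]=\{p\}$ and $J\cap[p,q]=\{q\}$). Now apply $g$: the segment $g[q,hq]$ lies on the $J$-image side and $g[p,q]=[gp,gq]$ is the bridge from $I$ to $gJ$, so $[q,p]\cup[p,gp]\cup[gp,gq]\cup[gq,ghq]$ is again geodesic, of length $\ge \ell + d_T(p,gp) + \ell + 0$. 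Iterating, the points $q, (gh)q, (gh)^2 q,\dots$ march off monotonically along a common direction, and one computes $d_T(q,(gh)^n q)\ge 2n\ell$ (using only the elliptic bounds; the loxodromic case only helps). Hence $\|gh\|_T \ge 2\ell > 0$, so $gh$ is loxodromic, and moreover the geodesic from $q$ to $(gh)q$ — which by the above contains the bridge $[p,q]$ — lies within Hausdorff distance $0$ of $\Axis_T(gh)$ once we pass to the invariant axis, because the axis of a loxodromic element is exactly the set of points $x$ with $[x,(gh)x]$ not backtracking, and our concatenated geodesic through the bridge has this property. Concluding that $[p,q]\subseteq\Axis_T(gh)$ then follows: the midpoint-type argument shows the bridge sits on the ``straight'' part of the $(gh)$-orbit geodesic.

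The main obstacle I expect is the bookkeeping of the four cases (each of $g,h$ elliptic or loxodromic) to verify uniformly that the relevant concatenations are genuinely geodesic, i.e.\ that no cancellation occurs where the bridge meets $I$ or $J$ — this is where disjointness of $\Char_T(g)$ and $\Char_T(h)$ is used in an essential way, since if the characteristic sets merely touched, the bridge would be degenerate and $gh$ could be elliptic. A clean way to package this is to choose, in the loxodromic cases, the specific fundamental points $p$ and $q$ as the endpoints of the bridge on the respective axes and to observe that $g$ acts on its axis without inversion, so that $[gp,p]$ and the bridge and its $g$-translate fit together into an arc on which $g$ (and then $gh$) translates; the real-tree setting means I should phrase everything in terms of segments and the three-point condition rather than edges, but otherwise the surface-group/free-group intuition of ``no backtracking'' transfers verbatim. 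Alternatively, one can invoke the standard fact that if $a$ translates a point off a half-tree into the complementary half-tree and $b$ does the reverse with overlapping control, then $ab$ is loxodromic with axis through the overlap; but I would prefer to give the self-contained orbit computation above since it also yields the location of the bridge inside $\Axis_T(gh)$ for free.
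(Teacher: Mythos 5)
The paper does not actually prove this lemma: it is quoted from Paulin (and goes back to Culler--Morgan), so there is no internal argument to compare with. Your proposal reconstructs the standard proof of that cited result, and the overall approach is sound: compute the displacement of the endpoint $q$ of the bridge under $gh$, observe that the geodesic $[q,(gh)q]$ decomposes as $[q,p]\cup[p,gp]\cup[gp,gq]\cup[gq,ghq]$ of length $2\ell+\|g\|_T+\|h\|_T$, and conclude that $gh$ is loxodromic with $q$ on its axis, so that $\Axis_T(gh)\supseteq[q,(gh)q]\supseteq[p,q]$.

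Two places in your write-up are asserted rather than checked, and you should tighten them. First, the loxodromicity criterion you invoke is not ``$[q,(gh)q]$ is a geodesic'' (any segment is); it is the turn condition $[q,(gh)q]\cap (gh)\cdot[q,(gh)q]=\{(gh)q\}$, i.e.\ no backtracking at $(gh)q$, which then yields $d(q,(gh)^nq)=n\,d(q,(gh)q)$ and $q\in\Axis_T(gh)$. By equivariance this reduces to showing $[q,hq]\cap h[q,p]=\{hq\}$: when $h$ is loxodromic this follows because $h[q,p]$ meets $J=\Axis_T(h)$ only at $hq$, and when $h$ is elliptic (so $hq=q$) it needs the separate folding argument below. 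Second, at the junctions where the translating element is elliptic (e.g.\ $g$ elliptic, $gp=p$), ``both segments meet $I$ only at $p$'' does not by itself prevent $g[p,q]$ from folding back onto $[p,q]$; the quick fix is: if $y\in[p,q]\cap g[p,q]$ with $y\neq p$, then $g^{-1}y\in[p,q]$ with $d(g^{-1}y,p)=d(y,gp)=d(y,p)$, so $gy=y$ and $y\in\Fix_T(g)\cap[p,q]=\{p\}$, a contradiction (equivalently, use $d(q,gq)=2\,d(q,\Fix_T(g))=2\ell$, which forces $[q,gq]$ to pass through $p$). With these routine checks inserted --- which is exactly the ``bookkeeping of the four cases'' you anticipated --- your argument is a complete and correct proof of the statement, giving both the loxodromicity of $gh$ and the containment of the bridge in its axis.
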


\begin{lemma}\label{lemma:action-on-tree}
Let $G$ be a group, and let $T$ be a $G$-tree. Let $g,h,h'\in G$ be elements that are loxodromic in $T$. If the distance between the projections of $\Axis_T(h)$ and of $\Axis_T(h')$ onto $\Axis_T(g)$ is greater than $\|g\|_T$, then the characteristic sets of $gh$ and of $h'$ are disjoint.  
\end{lemma}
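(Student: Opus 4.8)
The plan is to analyze the geometry inside the tree $T$ directly, using the structure of axes and closest-point projections between them. Write $L = \Axis_T(g)$, and let $p_h$ (resp. $p_{h'}$) denote the closest-point projection of $\Axis_T(h)$ (resp. $\Axis_T(h')$) onto $L$; by hypothesis $d(p_h, p_{h'}) > \|g\|_T$. Here I interpret the ``projection of $\Axis_T(h)$ onto $\Axis_T(g)$'' as the (possibly nondegenerate) image of the closest-point projection $T \to L$ restricted to $\Axis_T(h)$; the hypothesis that the distance between the two projection sets exceeds $\|g\|_T$ in particular forces these sets to be disjoint, and I will use a representative point in each. The goal is to show $\Char_T(gh) \cap \Char_T(h') = \emptyset$, after which one could even invoke Lemma~\ref{lemma:culler-morgan} for further consequences, though here we only need disjointness itself.

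The key step is to locate $\Axis_T(gh)$. First I would record the standard fact that when $g$ and $h$ are both loxodromic with $d(p_h, \Axis_T(g))$-type projection data as above, the axis of $gh$ runs, in a controlled region, close to $L$ near $p_h$: more precisely, the bridge between $\Axis_T(h)$ and $\Axis_T(gh)$, and the portion of $L$ it meets, sits within bounded distance (controlled by $\|g\|_T$) of the segment of $L$ adjacent to $p_h$. Concretely, the turn at which $\Axis_T(h)$ leaves $L$ is ``pushed'' by $g$ to a nearby turn, and one checks that $\Axis_T(gh)$ passes within $\|g\|_T$ of $p_h$ along $L$, and on the $\Axis_T(h)$ side stays near $\Axis_T(h)$. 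The cleanest route is: the projection of $\Axis_T(gh)$ onto $L$ is contained in the $\|g\|_T$-neighborhood of $p_h$ in $L$. Then the projection of $\Char_T(h') = \Axis_T(h')$ onto $L$ is $\{p_{h'}\}$ (or a set near it), which by hypothesis is at distance $> \|g\|_T$ from $p_h$, hence disjoint from the projection of $\Axis_T(gh)$ onto $L$. Since in a tree two subtrees with disjoint projections onto a common subtree (here $L$) are themselves disjoint, we conclude $\Axis_T(gh) \cap \Axis_T(h') = \emptyset$, i.e. $\Char_T(gh) \cap \Char_T(h') = \emptyset$.

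I expect the main obstacle to be pinning down precisely the claim that the projection of $\Axis_T(gh)$ onto $L$ lies in the $\|g\|_T$-neighborhood of $p_h$. This requires a short case analysis according to whether $\Axis_T(h)$ meets $L$ in a nondegenerate segment or only a point (or is disjoint, with a genuine bridge), and according to the relative orientation of the $g$-translation and the direction in which $\Axis_T(h)$ departs from $L$; in each case one writes down explicitly where a point $x$ on $\Axis_T(h)$ far from $L$ goes under $gh$ and uses that $g$ moves points of $L$ by exactly $\|g\|_T$ while moving points off $L$ by at least that much, to see that $x$ and $ghx$ are separated by a segment whose projection to $L$ is the short arc near $p_h$. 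A useful reduction is that it suffices to prove the projection statement after replacing $h$ by a high power $h^N$ (which has the same axis and only sharpens the picture), so one may assume $\|h\|_T$ is as large as needed compared to $\|g\|_T$ and to the diameter of the projection sets. Once the projection-neighborhood claim is in hand, the remaining deduction is a one-line tree argument.
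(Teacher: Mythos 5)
Your overall architecture (locate the projection of the characteristic set of $gh$ onto $L=\Axis_T(g)$, then use the fact that two subtrees with disjoint projections onto $L$ are disjoint) is a reasonable alternative to the paper's argument, and your final reduction step is sound. But the heart of the lemma is exactly the claim you defer, and both its statement and your plan for proving it have genuine problems. First, as stated the claim is false: you phrase it as containment in the $\|g\|_T$-neighbourhood of a \emph{point} $p_h$, but the projection of $\Axis_T(h)$ onto $L$ can be a long nondegenerate segment (e.g.\ in a free group, $g=x$, $h=x^{-10}yx^{10}y$: the overlap is $[x^{-10},1]$ and the axis of $gh=x^{-9}yx^{10}y$ meets $L$ in $[x^{-9},1]$, which lies in no $\|g\|_T$-ball around a single point). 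The correct claim must be the $\|g\|_T$-neighbourhood of the whole projection set, and it must concern $\Char_T(gh)$, not $\Axis_T(gh)$: nothing in the hypotheses prevents $gh$ from being elliptic (this cannot happen in a free group, but the lemma is for arbitrary $G$-trees, and the paper's statement and proof deliberately use characteristic sets for this reason). Second, and more seriously, your proposed method of proof cannot establish the (corrected) claim: tracking where a point $x\in\Axis_T(h)$ goes under $gh$ only shows that $\Char_T(gh)$ meets the segment $[x,ghx]$, i.e.\ that the characteristic set passes \emph{near} the expected region. It gives no upper bound on where $\Char_T(gh)$ can extend, which is what the lemma needs. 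To exclude the characteristic set from the far region you must either bound displacements from below at far points (show $d(x,ghx)$ strictly exceeds the minimal displacement whenever the projection of $x$ to $L$ is more than $\|g\|_T$ beyond the projection of $\Axis_T(h)$), or identify a full fundamental domain of $\Axis_T(gh)$ by a no-backtracking argument and treat the elliptic case separately. Finally, the suggested reduction of replacing $h$ by $h^N$ is not legitimate: it changes the element $gh$ whose characteristic set is being controlled, and $\Char_T(gh)$ and $\Char_T(gh^N)$ are unrelated in the way you would need.

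For comparison, the paper avoids locating $\Axis_T(gh)$ altogether: it picks the point $z$ of the projection of $\Axis_T(h)$ onto $\Axis_T(g)$ closest to $\Axis_T(h')$ and shows directly that every $x\in\Axis_T(h')$ satisfies $d(x,ghx)>d(z,ghz)$, so no point of $\Axis_T(h')$ realises the minimal displacement of $gh$; this handles the elliptic and loxodromic cases for $gh$ uniformly and needs no case analysis on how $\Axis_T(h)$ meets $\Axis_T(g)$. Your route can be repaired (the displacement computation above is essentially what is needed to prove the corrected projection claim), but as written the key step is missing and the sketch offered for it would not close the gap.
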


\begin{proof}
Let $z$ be the point in the projection of $\Axis_T(h)$ onto $\Axis_T(g)$ which is the closest to $\Axis_T(h')$.  

\medskip \noindent
We claim that for every $x\in\Axis_T(h')$, one has $d(x,ghx) > d(z, ghz)$. Indeed, notice that $z$ belongs to the geodesic $[x,hx]$. As a consequence, $gz$ belongs to the geodesic $[gh x, x]$: this is clear if $g$ translates $z$ further away from $\Axis_T(h')$, and otherwise this follows from our assumption that the distance between the projections of $\Axis_T(h)$ and of $\Axis_T(h')$ onto $\Axis_T(g)$ is greater than $\|g\|_T$. Therefore,
$$d(x,ghx)= d(x,gz)+d(gz,ghx) \geq d(z,hx).$$
Because the concatenation of geodesics $[x,z] \cup [z,hz] \cup [hz,hx]$ is again a geodesic, we also have
$$d(z,hx) = d(z,hz) + d(z,x) > \|h\|_T + 2 d(z,\Axis_T(h)) + \|g\|_T.$$
But $d(z,ghz) \leq d(z,gz)+ d(z,hz) = \|g\|_T+ \|h\|_T+2d(z,\Axis_T(h))$, so $d(x,ghx) > d(z, ghz)$ as desired. 

\medskip \noindent
The above claim implies that no element in $\Axis_T(h')$ is minimally displaced by $gh$, in other words $\Axis_T(h') \cap \Char_T(gh) = \emptyset$.
\end{proof}

\noindent Given a tree $T$ and a point $x\in T$, a \emph{direction} at $x$ in $T$ is a connected component of $T\setminus\{x\}$.

\begin{lemma}\label{lemma:directions}
Let $G$ be a group, and let $T$ be a $G$-tree. Let $x\in T$, let $d\subseteq T$ be a direction at $x$, and let $g\in G$.

If $gd\subsetneq d$, then $g$ is loxodromic in $T$ and $[x,gx]\subseteq\Axis_T(g)$.
\end{lemma}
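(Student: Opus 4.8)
The plan is to show first that $g$ cannot be elliptic, and then identify $[x,gx]$ as lying on the axis. Suppose toward a contradiction that $g$ is elliptic, with nonempty fixed set $\Fix_T(g)$. Since $gd \subsetneq d$, the point $x$ itself is not fixed (as $gx \in gd \subseteq d$, so $gx \neq x$); moreover $gx$ lies in $d$, so the geodesic $[x,gx]$ enters $d$ immediately at $x$. I would locate the fixed point set relative to $x$: let $y$ be the point of $\Fix_T(g)$ closest to $x$, so the segment $[x,y]$ meets $\Fix_T(g)$ only at $y$ and $g$ fixes $y$ while moving $x$. Since $g$ fixes $y$, it maps the segment $[x,y]$ to $[gx,y]$, and because $[x,y]\cap\Fix_T(g)=\{y\}$ these two segments share only the endpoint $y$; hence the direction at $y$ containing $x$ is mapped by $g$ to a \emph{different} direction at $y$. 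The direction $d$ at $x$ does not contain $y$ (otherwise $[x,y]$ would start by leaving $d$, but it starts into $d$ precisely when... — one must be slightly careful here: either $y\in d$ or $y\notin d$). If $y\notin d$, then $d$ is contained in a single direction at $y$, and since $g$ moves that direction off itself, $gd$ cannot be contained in $d$, contradicting $gd\subsetneq d$. If $y\in d$, then I would instead argue directly using displacement: for any $z$ on the ray from $x$ into $d$ far enough that $y\in[x,z]$, nothing forces a contradiction, so the case $y\in d$ needs a separate treatment — and this is where I expect the main subtlety.

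To handle the remaining case uniformly, an alternative and cleaner route is to use the displacement function. Pick any point $p$ in $d$; then $gp\in gd\subseteq d$. Consider the segment $[p,gp]$. I would show that iterating $g$ produces a strictly nested sequence $d\supsetneq gd\supsetneq g^2d\supsetneq\cdots$, and that the "entry points" of these directions march off to infinity, so that $d(x,g^nx)\to\infty$; an isometry with unbounded orbit on a tree is loxodromic. Concretely: since $gd\subsetneq d$, the basepoint of $gd$ (the point $gx$, the image of the basepoint $x$ of $d$) lies strictly inside $d$, i.e.\ $gx\in d$ and $gx\neq x$; applying $g$ again, $g^2x\in gd\subsetneq d$ and the geodesic $[x,g^2x]$ passes through $gx$ (because $g^2x$ lies in the subtree $gd$ whose complement in $T$ contains $x$, so the bridge from $x$ to $gd$ is an initial segment of $[x,g^2x]$, and that bridge passes through $gx=$ basepoint of $gd$). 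Inductively $[x,g^nx]$ passes through $x=g^0x, gx, g^2x,\dots,g^{n-1}x$ in order, all distinct, so $d(x,g^nx)\geq n$. Hence $g$ is loxodromic.

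It remains to see $[x,gx]\subseteq\Axis_T(g)$. Once $g$ is loxodromic, the axis $\Axis_T(g)$ is the set of points minimally displaced by $g$, equivalently the union $\bigcap$... — the standard fact is that $\Axis_T(g)=\{z : z\in[z',gz'] \text{ is not "backtracked"}\}$; more usefully, $y'\in\Axis_T(g)$ iff the segments $[y',gy']$ and $[gy',g^2y']$ concatenate to a geodesic (no backtracking at $gy'$). From the previous paragraph, $[x,g^nx]$ is a geodesic passing through $x,gx,\dots,g^{n-1}x$ consecutively; in particular $[x,gx]\cup[gx,g^2x]$ is geodesic, which is exactly the non-backtracking condition witnessing $x\in\Axis_T(g)$. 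By $g$-invariance $gx\in\Axis_T(g)$ too, and since the axis is convex, the whole segment $[x,gx]$ lies in $\Axis_T(g)$, as desired.

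The step I expect to require the most care is establishing the strict nesting $gd\subsetneq d \Rightarrow g^2d\subsetneq gd$ together with the fact that the geodesic $[x,g^nx]$ genuinely passes through all the intermediate points $g^ix$ in order (equivalently, that there is no backtracking); this is a short but slightly delicate argument about bridges between a subtree and a point in its complement in an $\mathbb{R}$-tree, and it is what simultaneously gives loxodromicity and the axis statement.
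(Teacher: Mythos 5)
Your main argument is correct, and it takes a genuinely different route from the paper's. The paper argues by contradiction twice: if $g$ were elliptic, the fixed point $x$ (when $gx=x$) or the fixed midpoint of $[x,gx]$ produces a configuration of directions incompatible with $gd\subsetneq d$; if $g$ is loxodromic but $x\notin\Axis_T(g)$, the same game is played with the closest-point projection of $x$ to the axis. You instead run the classical direct nesting argument: $gx\in d\setminus\{x\}$, the directions $g^kd$ are strictly nested, the points $x,gx,\dots,g^nx$ lie in order along $[x,g^nx]$, so orbits are unbounded, $g$ is loxodromic by the dichotomy recalled in the paper's preliminaries, and the absence of backtracking at $gx$ places $x$, hence $[x,gx]$, on the axis. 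Three small repairs: (i) $gx\in d$ is true but not because $gx\in gd$ (it is not, as $gd$ is a component of $T\setminus\{gx\}$); rather, pick $z\in gd\subseteq d$, note $x\notin gd$ so $gx\in[x,z]$, and $[x,z]\setminus\{x\}\subseteq d$, while $gx=x$ is impossible since two directions at $x$ are equal or disjoint; (ii) since $T$ is a real tree, not a unit-edge simplicial one, $d(x,g^nx)\ge n$ is unjustified, but your alignment gives $d(x,g^nx)=n\,d(x,gx)$ with $d(x,gx)>0$, which suffices; (iii) the ordering you flag as delicate does follow from the nesting, since $x\notin g^kd$ while $g^nx\in g^{n-1}d\subseteq g^kd$ for $k\le n-1$, so each $g^kx$ separates $x$ from $g^nx$. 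Note finally that once you have $gx\neq x$ and $gx\in[x,g^2x]$, the standard lemma on isometries of real trees (the same circle of ideas as Lemma~\ref{lemma:culler-morgan}) already yields that $g$ is loxodromic with $[x,gx]\subseteq\Axis_T(g)$ and $\|g\|_T=d(x,gx)$, so the unbounded-orbit detour and the axis criterion could be bypassed; the paper's contradiction argument, by contrast, uses only the projection picture, while yours is constructive and also identifies the translation length.
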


\begin{proof}
Assume towards a contradiction that $g$ is elliptic in $T$. If $gx=x$, then $d$ and $gd$ are two directions at $x$, so they are either equal or disjoint; in particular we cannot have $gd\subsetneq d$. If $gx\neq x$, let $m$ be the midpoint of $[x,gx]$. Then $gm=m$. If $d$ is the direction at $x$ that contains $m$, then $gd$ is the direction at $gx$ that contains $m$, and in this case $gd$ is not contained in $d$. Otherwise, the direction $d$ is contained in the unique direction at $m$ that contains $x$, and $gd$ is contained in the unique direction at $m$ that contains $gx$, and these directions are disjoint, so again we cannot have $gd\subsetneq d$.

This proves that $g$ is loxodromic in $T$, and we are left proving that $x\in\Axis_T(g)$. Otherwise, let $y$ be the closest-point projection of $x$ to $\Axis_T(g)$. If $d$ is the direction at $x$ that contains $y$, then $gd$ is the direction at $gx$ that contains $gy$, and $gd$ is not contained in $d$. Otherwise $d$ is contained in the direction at $y$ that contains $x$, and $gd$ is contained in the direction at $gy$ that contains $gx$, and $d\cap gd=\emptyset$. This concludes our proof.      
\end{proof}

\begin{lemma}\label{lemma:tree-extra}
Let $G$ be a group, and let $T$ be a $G$-tree. Let $g,h\in G$ be $T$-loxodromic elements, such that $\Axis_T(g)$ and $\Axis_T(h)$ intersect in a compact (possibly empty) segment. Then $$\bigcap_{n,m\in\mathbb{Z}\setminus\{0\}}\Char_T(g^nh^m)$$ is empty if $\Axis_T(g)\cap\Axis_T(h)$ is nondegenerate, and otherwise it is equal to the bridge between $\Axis_T(g)$ and $\Axis_T(h)$.   
\end{lemma}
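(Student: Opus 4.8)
The plan is to identify, for suitably large exponents $N,M$, the characteristic sets of the four auxiliary elements $g^{\pm N}h^{\pm M}$ precisely enough to read off the intersection; the rest is then elementary tree combinatorics. Throughout write $Y=\Axis_T(g)\cap\Axis_T(h)$, a compact segment, and recall that since $g,h$ are loxodromic we have $\Char_T(g^n)=\Axis_T(g)$ and $\Char_T(h^m)=\Axis_T(h)$ for all $n,m\ne 0$. The argument splits according to whether $Y$ is degenerate or not.

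\emph{The degenerate case.} Let $\beta$ be the bridge between $\Axis_T(g)$ and $\Axis_T(h)$, with endpoints $y\in\Axis_T(g)$ and $y'\in\Axis_T(h)$ (so $y=y'$ when $\beta$ is a point). For the inclusion $\beta\subseteq\bigcap_{n,m\ne 0}\Char_T(g^nh^m)$: if $\Axis_T(g)\cap\Axis_T(h)=\emptyset$ then $\Char_T(g^n)$ and $\Char_T(h^m)$ are disjoint, so Lemma~\ref{lemma:culler-morgan} applies and gives that $g^nh^m$ is loxodromic with $\beta\subseteq\Axis_T(g^nh^m)$; if $\Axis_T(g)\cap\Axis_T(h)=\{p\}$, then the two endpoints of $\Axis_T(g)$ at $p$ and the two of $\Axis_T(h)$ at $p$ are four pairwise distinct directions, and feeding a suitable direction at $p$ into Lemma~\ref{lemma:directions} shows that $g^nh^m$ is loxodromic with $p\in\Axis_T(g^nh^m)$. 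For the reverse inclusion I would fix $N,M$ large and determine $\Axis_T(g^Nh^M)$ and $\Axis_T(g^{-N}h^{-M})$ by computing the fundamental domains $[y',g^{\pm N}h^{\pm M}y']$, which are explicit concatenations of $\beta$, a sub-segment of $\Axis_T(g)$ at $y$, a translate of $\beta$, and a translate of a sub-segment of $\Axis_T(h)$: one finds that each of these two axes contains $\beta$, that $\Axis_T(g^Nh^M)$ leaves $y$ along $\Axis_T(g)$ in the translation direction of $g$ and leaves $y'$ along $\Axis_T(h)$ opposite to the translation direction of $h$, while $\Axis_T(g^{-N}h^{-M})$ leaves $y$ and $y'$ along $\Axis_T(g)$ and $\Axis_T(h)$ in the two opposite directions. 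Hence these are two lines containing $\beta$ which diverge into disjoint subtrees of $T$ at each endpoint of $\beta$; since the intersection of two lines in a tree is connected, $\Char_T(g^Nh^M)\cap\Char_T(g^{-N}h^{-M})=\beta$, and combining the two inclusions gives $\bigcap_{n,m\ne 0}\Char_T(g^nh^m)=\beta$.

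\emph{The nondegenerate case.} Write $Y=[a,b]$ with $a\ne b$. Since $\Axis_T(g)\cap\Axis_T(h)=Y$ exactly, the direction at $a$ along $\Axis_T(g)\setminus Y$, the direction at $a$ along $\Axis_T(h)\setminus Y$, and the direction at $a$ towards $b$ are pairwise distinct; let $T_a$ be the union of $\{a\}$ with the first two of the corresponding components of $T\setminus\{a\}$, and define $T_b$ symmetrically at $b$. Every point of $T_a$ (resp.\ $T_b$) projects onto $Y$ at $a$ (resp.\ at $b$), so $T_a\cap T_b=\emptyset$. After possibly replacing $g$ by $g^{-1}$ and/or $h$ by $h^{-1}$ — which leaves the family $\{g^nh^m:n,m\in\mathbb{Z}\setminus\{0\}\}$ unchanged — we may assume $g$ translates towards the $b$-end of $\Axis_T(g)$ and $h$ towards the $b$-end of $\Axis_T(h)$. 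I then take $N,M$ with $N\|g\|_T,M\|h\|_T>|Y|$: since $h^{-M}$ carries $b$ across $Y$ out of the $a$-side of $\Axis_T(h)$, and $g^N$ then carries its projection $a$ back across $Y$ out of the $b$-side of $\Axis_T(g)$, one checks (again via Lemma~\ref{lemma:directions}, applied to the direction $d$ at $b$ along $\Axis_T(g)\setminus Y$) that $g^Nh^{-M}$ is loxodromic, that $b\in\Axis_T(g^Nh^{-M})$, and — by tracking which components of $T$ get mapped into which — that $\Char_T(g^Nh^{-M})=\Axis_T(g^Nh^{-M})\subseteq T_b$. The symmetric computation gives $\Char_T(g^{-N}h^M)\subseteq T_a$, whence $\bigcap_{n,m\ne 0}\Char_T(g^nh^m)\subseteq\Char_T(g^Nh^{-M})\cap\Char_T(g^{-N}h^M)\subseteq T_b\cap T_a=\emptyset$.

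The only place genuine work is needed is tracking \emph{which} directions the axes of the auxiliary elements $g^{\pm N}h^{\pm M}$ use at the bridge (in the degenerate case) or at the endpoints of the overlap segment (in the nondegenerate case) — concretely, verifying that the explicitly written fundamental domains are honest geodesics and that no backtracking occurs when they are iterated. Everything else reduces to standard facts about trees: closest-point projections onto subtrees behave well, a loxodromic axis meets any point of $T$ at most once, and two lines in a tree meet in a connected set. Lemma~\ref{lemma:directions} is the convenient tool for the recurring ``$g^{\pm N}h^{\pm M}$ is loxodromic and this point lies on its axis'' assertions, since exhibiting a direction $d$ at the point with $g^{\pm N}h^{\pm M}d\subsetneq d$ delivers both conclusions at once.
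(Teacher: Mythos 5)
Your proposal is correct and follows essentially the same route as the paper's proof: in the nondegenerate case you use the mixed-sign products $g^{N}h^{-M}$ and $g^{-N}h^{M}$ with exponents large compared to the overlap and Lemma~\ref{lemma:directions} to show their axes attach at opposite endpoints of $\Axis_T(g)\cap\Axis_T(h)$ and are therefore disjoint, and in the degenerate case you combine Lemma~\ref{lemma:culler-morgan} (giving the bridge inside every $\Axis_T(g^nh^m)$) with the observation that the axes of $g^{N}h^{M}$ and $g^{-N}h^{-M}$ leave the bridge in distinct directions at each endpoint, so their intersection is exactly the bridge. The only (harmless) deviations are cosmetic: you treat the case where the axes meet in a single point via Lemma~\ref{lemma:directions} rather than Lemma~\ref{lemma:culler-morgan}, and you allow distinct exponents $N,M$ where the paper takes them equal.
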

\begin{figure}
\begin{center}
\includegraphics[scale=0.4]{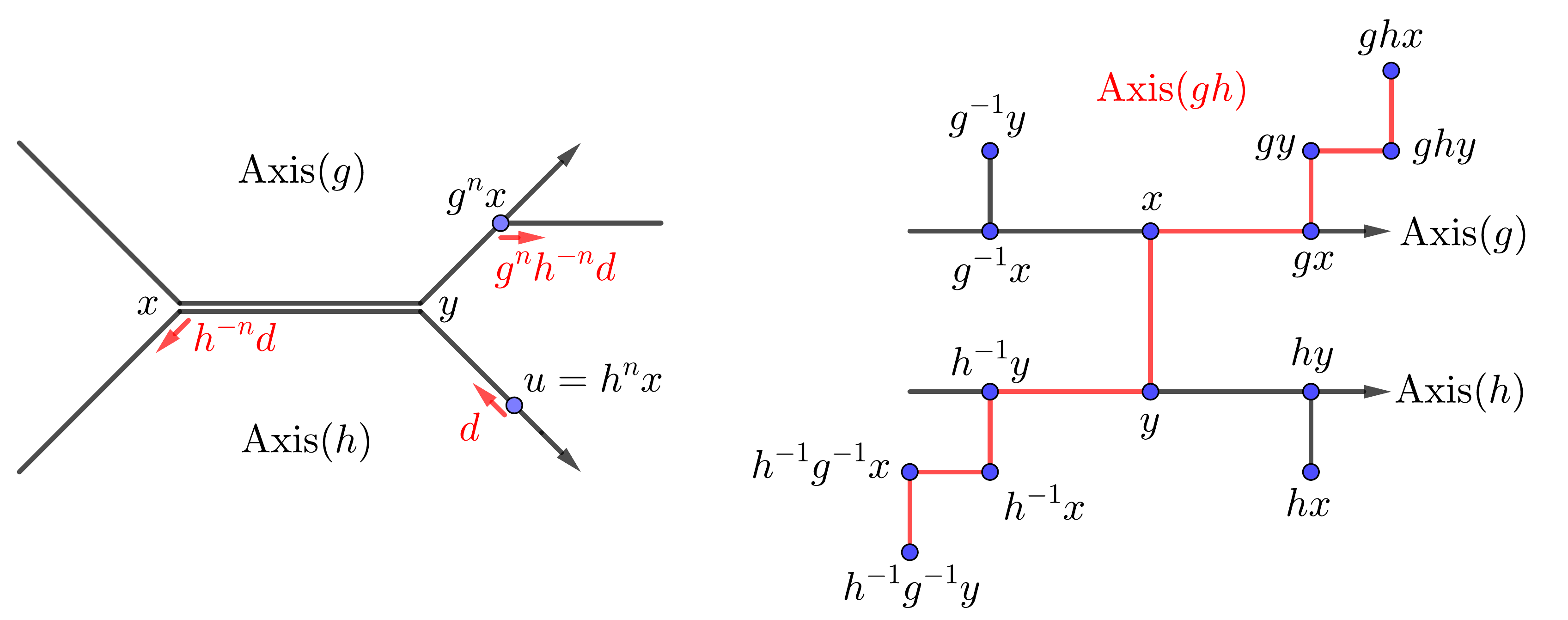}
\caption{The two cases considered in the proof of Lemma \ref{lemma:tree-extra}.}
\label{Tree}
\end{center}
\end{figure}

\begin{proof}
We refer to Figure \ref{Tree} for an illustration of the configurations of points we consider below.

\medskip \noindent
We first assume that $\Axis_T(g)\cap\Axis_T(h)$ is a nondegenerate segment $I=[x,y]$, and we let $D$ be its length. Up to replacing $h$ by $h^{-1}$, we will assume that $g$ and $h$ both translate in the direction from $x$ to $y$ along $I$. Let $n\in\mathbb{N}$ be such that $\|g^n\|_T > D$ and $\|h^n\|_T>D$. We will prove that $g^nh^{-n}$ and $g^{-n}h^n$ are $T$-loxodromic and that their axes do not intersect.

\medskip\noindent
Let $u=h^nx$, and let $d$ be the direction at $u$ that contains $x$. Then the direction $g^nh^{-n}d$ is contained in $d$, and the intersection between $[u,g^nh^{-n}u]$ and $I$ is reduced to $\{y\}$. By Lemma~\ref{lemma:directions}, this implies that $g^nh^{-n}$ is $T$-loxodromic and that its axis contains $[u,g^nh^{-n}u]$, and therefore $\Axis_T(g^nh^{-n})\cap I=\{y\}$. Likewise, one shows that $\Axis_T(g^{-n}h^n)\cap I=\{x\}$. This implies that $\Axis_T(g^nh^{-n})\cap\Axis_T(g^{-n}h^n)=\emptyset$, as desired.

\medskip\noindent
We now assume that $\Axis_T(g)\cap\Axis_T(h)$ is degenerate, and let $I=[x,y]$ be the bridge between $\Axis_T(g)$ and $\Axis_T(h)$, with $x\in\Axis_T(g)$ and $y\in\Axis_T(h)$ (possibly $x=y$ if $\Axis_T(g)\cap\Axis_T(h)$ is reduced to one point). By Lemma~\ref{lemma:culler-morgan}, for every $n,m\in\mathbb{Z}\setminus\{0\}$, the element $g^nh^m$ is $T$-loxodromic, and $I$ is contained in $\Axis_T(g^nh^m)$. In addition, we observe that $\Axis_T(gh)\cap\Axis_T(g)=[x,gx]$ and $\Axis_T(gh)\cap\Axis_T(h)=[y,h^{-1}y]$, while $\Axis_T(g^{-1}h^{-1})\cap\Axis_T(g)=[x,g^{-1}x]$ and $\Axis_T(g^{-1}h^{-1})\cap\Axis_T(h)=[y,hy]$. This implies that $\Axis_T(gh)\cap\Axis_T(g^{-1}h^{-1})=I$, which concludes our proof.
\end{proof}

\subsection{Review of the Bestvina--Bromberg--Fujiwara construction}

\noindent 
Recall that given a group $G$ acting on a hyperbolic metric space $Y$, an element $g\in G$ is \emph{loxodromic} in $Y$ if for some (equivalently, any) $y\in Y$, the orbit map $n\mapsto g^ny$ is a quasi-isometric embedding of $\mathbb{Z}$ into $Y$.

\medskip\noindent 
An action of a group $G$ on a metric space $Y$ is \emph{acylindrical} if for every $R>0$, there exist $L,N>0$ such that given any two points $x,y\in Y$ with $d(x,y)>L$, there are at most $N$ elements $g\in G$ such that $d(x,gx)<R$ and $d(y,gy)<R$. A group $G$ is \emph{acylindrically hyperbolic} \cite{OsinAcyl} if $G$ admits a nonelementary acylindrical action on a hyperbolic space. An element $g \in G$ is a \emph{generalised loxodromic element} if it is loxodromic with respect to some acylindrical action of $G$ on a hyperbolic space.

\medskip \noindent The proof of our main results is based on a celebrated construction of Bestvina, Bromberg and Fujiwara \cite{BBF}, further developed by the same authors and Sisto in \cite{BBFS}, which gives a criterion for proving that a group is acylindrically hyperbolic.

\medskip\noindent
Let $\mathbb{Y}$ be a collection of metric spaces, and let $G$ be a group. A $G$-action on $\mathbb{Y}$ is \emph{metric-preserving} if for every $g\in G$ and $Y\in\mathbb{Y}$, there exists an isometry $\iota_g^Y:Y\to g\cdot Y$, so that for every $g,h\in G$ and every $Y\in\mathbb{Y}$, one has $\iota_g^{h\cdot Y}\circ\iota_h^Y=\iota_{gh}^Y$.

\begin{thm}[Bestvina--Bromberg--Fujiwara--Sisto \cite{BBFS}]\label{thm:BBF}
Let $\delta\ge 0$, let $\mathbb{Y}$ be a collection of $\delta$-hyperbolic geodesic metric spaces, and let $G$ be a group acting in a metric-preserving way on $\mathbb{Y}$. Assume that there exists an assignment of a subset $\pi_Y(Z)\subseteq Y$ for any two distinct $Y,Z\in\mathbb{Y}$, such that if we set $$d_Y(X,Z) : = \mathrm{diam}_Y(\pi_Y(X) \cup \pi_Y(Z))$$ for all pairwise distinct $X,Y,Z \in \mathbb{Y}$, the following conditions hold:
 \begin{enumerate}
 \item\textbf{Projection axioms.} There exists $\theta \geq 0$ such that for all pairwise distinct $X,Y,Z,W \in \mathbb{Y}$, the following conditions hold:
\begin{itemize}
	\item[(P0)] $d_Y(X,X) \leq \theta$;
	\item[(P1)] if $d_Y(X,Z)> \theta$ then $d_X(Y,Z) \leq \theta$;
	\item[(P2)] $\{U \neq X,Z \mid d_U(X,Z) > \theta\}$ is finite;
\end{itemize}
\item\textbf{Unboundedness.} There exists at least one $Y \in \mathbb{Y}$ such that $\mathrm{Stab}_G(Y)$ has unbounded orbits in $Y$.
\item\textbf{Equivariance of $\pi$.} For every $g\in G$ and for all distinct $X,Y \in\mathbb{Y}$, one has $\pi_{gX}(gY)=\iota_g^X(\pi_X(Y))$. 
\item\textbf{Local acylindricity.} For every $Y \in \mathbb{Y}$, $\mathrm{Stab}_G(Y)$ acts acylindrically on $Y$ with uniform constants independent of $Y$, i.e. for every $R>0$, there exist $L,N>0$ such that, for every $Y \in \mathbb{Y}$ and for all $x,y\in Y$ with $d(x,y)>L$, there are at most $N$ elements $h\in \mathrm{Stab}_G(Y)$ such that $d(x,hx)<R$ and $d(y,hy)<R$. 
\item\textbf{Global acylindricity.} There exist $N,B \geq 1$ such that the pointwise stabiliser in $G$ of every subset of $\mathbb{Y}$ of cardinality at least $N$ has cardinality at most $B$. 
\end{enumerate}
Then $G$ acts acylindrically on a hyperbolic space $X$. Moreover, for every $Y \in \mathbb{Y}$, every $Y$-loxodromic element of $\mathrm{Stab}_G(Y)$ is $X$-loxodromic. As a consequence, $G$ is either virtually cyclic or acylindrically hyperbolic.
\end{thm}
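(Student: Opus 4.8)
The plan is to deduce the theorem by combining the projection-complex machinery of Bestvina, Bromberg and Fujiwara \cite{BBF} with its acylindrical refinement due to the same authors and Sisto \cite{BBFS}. First I would feed the projection axioms (P0)--(P2) into the Bestvina--Bromberg--Fujiwara construction: fixing a constant $K=K(\theta)$ large enough, one builds the \emph{projection complex} $\mathcal{P}_K(\mathbb{Y})$, whose vertex set is $\mathbb{Y}$ and in which two vertices $Y,Z$ are declared adjacent when a suitable modification of $d_U(Y,Z)$ is at most $K$ for every $U\notin\{Y,Z\}$; by \cite{BBF} this graph is a connected quasi-tree, and in particular $\delta'$-hyperbolic for some $\delta'=\delta'(\theta)$. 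One then passes to the associated \emph{quasi-tree of metric spaces} $\mathcal{C}=\mathcal{C}_K(\mathbb{Y})$: replace each vertex $Y$ of $\mathcal{P}_K(\mathbb{Y})$ by a copy of the space $Y$ and, along each edge $\{Y,Z\}$, glue in a segment between $\pi_Y(Z)\subseteq Y$ and $\pi_Z(Y)\subseteq Z$. Since the members of $\mathbb{Y}$ are uniformly $\delta$-hyperbolic geodesic spaces, \cite{BBF} shows that $\mathcal{C}$ is a geodesic hyperbolic space in which each $Y\in\mathbb{Y}$ sits as a uniformly quasiconvex, quasi-isometrically embedded subspace with a uniformly coarsely Lipschitz retraction $\mathcal{C}\to Y$. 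Finally, the cocycle relation $\iota_g^{hY}\circ\iota_h^Y=\iota_{gh}^Y$ built into the definition of a metric-preserving action, together with the equivariance of $\pi$ (hypothesis~3), shows that $G$ permutes the vertices of $\mathcal{P}_K(\mathbb{Y})$ respecting adjacency and assembles into an isometric $G$-action on $\mathcal{C}$; this $\mathcal{C}$ is the hyperbolic space on which $G$ is claimed to act acylindrically.

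Next I would produce loxodromic elements and check that the action is unbounded. Fix $Y\in\mathbb{Y}$ as in hypothesis~2, so $\Stab_G(Y)$ acts on the hyperbolic space $Y$ with unbounded orbits; this action is acylindrical by hypothesis~4, hence by Osin's classification of acylindrical actions on hyperbolic spaces \cite{OsinAcyl} it is not elliptic and therefore contains an element $h$ acting loxodromically on $Y$. Since the inclusion $Y\hookrightarrow\mathcal{C}$ is a quasi-isometric embedding admitting a coarsely Lipschitz retraction, the orbit map $n\mapsto h^n x$ (for $x\in Y$) remains a quasi-isometric embedding of $\mathbb{Z}$ into $\mathcal{C}$, so $h$ is $\mathcal{C}$-loxodromic; the same argument applies verbatim to any $Y$ and any $Y$-loxodromic element of $\Stab_G(Y)$, which gives the ``moreover'' clause. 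In particular the $G$-action on $\mathcal{C}$ is unbounded and contains a loxodromic element.

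It then remains to prove that the $G$-action on $\mathcal{C}$ is acylindrical, which is the heart of \cite{BBFS} and the place where hypotheses~4 and~5 are combined. Given $R>0$, suppose $x,y\in\mathcal{C}$ are far apart and $g\in G$ moves both of them by at most $R$; then $g$ coarsely preserves a geodesic $[x,y]$ of $\mathcal{C}$, and one analyses how $[x,y]$ threads through the pieces $Y$ and the connecting segments. Either $[x,y]$ traverses at least $N_0$ distinct pieces, each over a long sub-segment, which forces $g$ to stabilise each of them and hence to lie in the pointwise stabiliser of a subset of $\mathbb{Y}$ of cardinality $\geq N_0$ -- a set of cardinality at most $B$ by hypothesis~5 -- or else $[x,y]$ spends a long time inside a single piece $Y$, so that $g\in\Stab_G(Y)$ displaces two far-apart points of $Y$ by at most a bounded amount, a situation controlled, uniformly in $Y$, by hypothesis~4. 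Choosing $N_0$, and then $L$ depending on $N_0$ and $R$, rules out all other configurations and yields the required uniform bound $N$. Once acylindricity is known, the dichotomy follows: the $G$-action on $\mathcal{C}$ is acylindrical with a loxodromic element, so by Osin's trichotomy \cite{OsinAcyl} it is either lineal -- in which case $G$ is virtually cyclic -- or non-elementary, in which case $G$ is acylindrically hyperbolic.

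The main obstacle is precisely the acylindricity step: one has to control, with constants not depending on the pieces involved, how geodesics in the quasi-tree of metric spaces interact simultaneously with the individual hyperbolic spaces and with the combinatorics of the projection complex, and then weld the ``global'' bound coming from hypothesis~5 to the ``local'' bound coming from hypothesis~4 without any loss of uniformity. This delicate analysis is carried out in full in \cite{BBFS}; the role of the statement above is to package their conclusion in the form convenient for use in the rest of the paper.
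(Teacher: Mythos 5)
Your proposal is correct and follows essentially the same route as the paper: both pass to the (suitably modified) projections of \cite{BBFS}, build the quasi-tree of metric spaces $\mathcal{C}_K(\mathbb{Y})$, cite \cite{BBF,BBFS} for its hyperbolicity and for the acylindricity of the $G$-action (where hypotheses~4 and~5 enter), use the quasi-isometric embedding of each piece $Y$ to promote $Y$-loxodromics of $\Stab_G(Y)$ to loxodromics of the total space and to get unboundedness from hypothesis~2, and conclude with Osin's dichotomy \cite{OsinAcyl}. The only cosmetic difference is that the paper makes explicit the replacement of $\pi$ by the modified projections $\pi'$ of \cite[Theorem~4.1]{BBFS} (with the stronger axioms for $11\theta$ and $|d_Y-\delta_Y|\le 2\theta$), which you only allude to as ``a suitable modification''.
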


\begin{proof}
For any two distinct $Y,Z \in \mathbb{Y}$, define a new subset $\pi_Y'(Z) \subset Y$ as in \cite[Theorem 4.1]{BBFS}. Also, for all pairwise distinct $X,Y,Z \in \mathbb{Y}$, set
$$\delta_Y(X,Z):= \mathrm{diam}_Y (\pi_Y'(X) \cup \pi_Y'(Z)).$$
According to \cite[Theorem 4.1]{BBFS}:
\begin{itemize}
	\item our new functions $\delta_Y$ satisfy the \emph{stronger projection axioms} $(SP1)-(SP5)$ from \cite[Section 2]{BBFS}, and in particular the projection axioms $(P0)-(P2)$, for $11 \theta$;
	\item for every $Y \in \mathbb{Y}$, one has $|d_Y-\delta_Y| \leq 2 \theta$;
	\item the projection map $\pi'$ is $G$ -equivariant, i.e. the equality $\pi_{gX}'(gY)= \iota_g^X (\pi_X'(Y))$ holds for every $g \in G$ and for all distinct $X,Y \in \mathbb{Y}$.
\end{itemize}
Given $K>0$, say that $X,Y\in\mathbb{Y}$ have no $K$-large projection if for every $Z\in\mathbb{Y}\setminus\{X,Y\}$, one has $\delta_Z(X,Y)\le K$. Define $\mathcal{C}_K(\mathbb{Y})$ to be the space obtained from the union of the metric spaces in $\mathbb{Y}$ by adding an edge of length $K$ between every point $x\in\pi_Y (X)$ and every point $y\in \pi_X(Y)$ whenever $X$ and $Y$ have no $K$-large projection. Then, if $K$ is sufficiently large compared to $\theta$ (see \cite[Lemma~3.1]{BBFS}), $\mathcal{C}_K(\mathbb{Y})$ is naturally endowed with a metric and $G$ acts on it isometrically. According to \cite[Theorem~6.4 and Corollary~6.8]{BBFS}, if $K\geq 4\theta$ then $\mathcal{C}_K(\mathbb{Y})$ is hyperbolic and $G$ acts acylindrically on it. 

\medskip \noindent
Observe that, for every $Y \in \mathbb{Y}$, the inclusion $Y \subset \mathcal{C}_K(\mathbb{Y})$ induces a quasi-isometric embedding. Indeed, recall that all maps $\delta_Z$ satisfy the projection axiom $(P0)$ for $11\theta$. Let $K>11\theta$. Fixing two points $x,z \in Y$, it follows from \cite[Theorem 6.3]{BBFS} that 
$$\frac{1}{4} \rho_Y(x,z) \leq d_\mathcal{C}(x,z) \leq 2 \rho_Y(x,z) +3K,$$
where $\rho_Y$ denotes the metric of $Y$. 
As the inclusion $Y \subset \mathcal{C}_K(\mathbb{Y})$ is clearly $\mathrm{Stab}_G(Y)$-equivariant, we deduce that every $Y$-loxodromic element of $\mathrm{Stab}_G(Y)$ must be $\mathcal{C}_K(\mathbb{Y})$-loxodromic.

\medskip \noindent
Thus, we have proved that $G$ acts acylindrically on the hyperbolic space $\mathcal{C}_K(\mathbb{Y})$ with unbounded orbits. It then follows from \cite[Theorem~1.1]{OsinAcyl} that $G$ is either acylindrically hyperbolic or virtually cyclic. 
\end{proof}

\subsection{Elementary subgroups and consequences of the WPD property}

\noindent Let $G$ be a group, and let $Y$ be a hyperbolic $G$-space. Following Bestvina and Fujiwara \cite{BF-WPD}, we say that an element $g\in G$ is \emph{WPD} in $Y$ if $g$ acts loxodromically on $Y$ and for every $R>0$ and every $y\in Y$, there exists $N\in\mathbb{N}$ such that the set $$\left\{h\in G \mid d(y,hy)\le R \text{~and~} d \left( g^Ny,hg^Ny \right) \le R\right\}$$ is finite.  

\medskip\noindent
Given a group $G$ and an element $g\in G$, we let $$E(g):= \left\{ h \in G \mid \exists n,m \in \mathbb{Z} \backslash \{0\}, hg^nh^{-1}= g^{m} \right\}.$$ The following result of Dahmani, Guirardel and Osin will be crucial in the present paper.

\begin{prop}[{Dahmani--Guirardel--Osin \cite[Lemma~6.5, Corollary~6.6, Theorem~6.8 and Proposition~2.8]{DGO}}]\label{prop:dgo}
Let $G$ be a group acting by isometries on a hyperbolic space $Y$, and let $g\in G$ be an element which is WPD in $Y$. Then $E(g)$ is virtually cyclic, and it is the unique maximal virtually cyclic subgroup of $G$ that contains $g$. 

\noindent In addition $E(g)$ is almost malnormal in $G$, i.e.\ for every $h\notin E(g)$, the intersection $E(g)\cap hE(g)h^{-1}$ is finite.
\end{prop}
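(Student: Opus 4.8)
The plan is to exploit the familiar picture that a WPD loxodromic $g$ has a quasi-axis whose coarse stabiliser is tightly constrained by the WPD condition. Fix a point $y\in Y$, let $\gamma$ be the quasi-geodesic $n\mapsto g^ny$, and let $g^{+\infty},g^{-\infty}\in\partial Y$ be its endpoints. First I would check that $E(g)$ is actually a subgroup: if $h_ig^{n_i}h_i^{-1}=g^{m_i}$ for $i=1,2$, then raising the two relations to the powers $n_2$ and $m_2$ respectively and composing gives $h_1h_2\,g^{n_1n_2}\,(h_1h_2)^{-1}=g^{m_1m_2}$, and $h^{-1}\in E(g)$ whenever $h\in E(g)$. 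Next, for $h\in E(g)$ one has $hg^{nk}h^{-1}=g^{mk}$ for all $k$, so the isometry $h$ carries the fixed-point set of $g$ on $\partial Y$ -- which is exactly $\{g^{+\infty},g^{-\infty}\}$ -- onto that of $hg^nh^{-1}=g^m$, i.e.\ back onto $\{g^{+\infty},g^{-\infty}\}$. Hence $E(g)$ permutes this pair of points, and the subgroup $E^+(g)$ fixing both of them has index at most $2$ in $E(g)$.

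The core of the argument is to show, using the WPD hypothesis, that $\langle g\rangle$ has finite index in $E^+(g)$. For $h\in E^+(g)$, stability of quasi-geodesics in the hyperbolic space $Y$ gives that $h\gamma$ lies within uniformly bounded Hausdorff distance of $\gamma$ (they have the same endpoints at infinity), so $hy$ is within a uniform distance of some $g^{k(h)}y$; putting $h':=hg^{-k(h)}$ one obtains $d(y,h'y)\le R_1$ with $R_1$ uniform. Since $h'$ still fixes $g^{+\infty}$ and $g^{-\infty}$ and coarsely fixes $\gamma$ setwise, a Gromov-product estimate shows $d(g^Ny,h'g^Ny)\le R_2$ for all large $N$, again uniformly. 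Applying the WPD condition with $R=\max(R_1,R_2)$ produces an integer $N_0$ for which $F:=\{k\in G\mid d(y,ky)\le R,\ d(g^{N_0}y,kg^{N_0}y)\le R\}$ is finite, and every $h'$ lies in $F$; a short coset bookkeeping then shows distinct cosets of $\langle g\rangle$ in $E^+(g)$ meet distinct sets $f\langle g\rangle$ ($f\in F$), so $[E^+(g):\langle g\rangle]\le|F|<\infty$. This yields that $E(g)$ is virtually cyclic. I expect this WPD step to be the main obstacle, and within it the genuinely geometric point -- where hyperbolicity of $Y$ is used -- is the uniform bound $d(g^Ny,h'g^Ny)\le R_2$.

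For the maximality statement, let $H$ be any virtually cyclic subgroup of $G$ containing $g$. As $g$ has infinite order and infinite subgroups of infinite virtually cyclic groups have finite index, $\langle g\rangle$ has finite index in $H$, so its normal core is a finite-index normal infinite cyclic subgroup $\langle g^k\rangle\trianglelefteq H$ for some $k\neq 0$. For $h\in H$, normality gives $hg^kh^{-1}\in\langle g^k\rangle$, and since conjugation preserves translation length in $Y$ this forces $hg^kh^{-1}=g^{\pm k}$, so $h\in E(g)$; thus $H\subseteq E(g)$, and since $E(g)$ is itself virtually cyclic and contains $g$ it is the unique maximal virtually cyclic subgroup containing $g$. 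Finally, for almost malnormality, suppose $h\notin E(g)$ but $E(g)\cap hE(g)h^{-1}$ is infinite; then it has finite index in $E(g)$ (again, infinite subgroups of virtually cyclic groups are finite index), hence contains $g^N$ for some $N\neq 0$. Since $hE(g)h^{-1}=E(hgh^{-1})$ and $\langle hgh^{-1}\rangle$ has finite index in $E(hgh^{-1})$, intersecting with $\langle g^N\rangle$ gives $g^{NM}\in\langle hgh^{-1}\rangle$ for some $M\neq 0$, i.e.\ $h^{-1}g^{NM}h=g^k$ with $NM,k\neq 0$; thus $h\in E(g)$, a contradiction. This completes the plan.
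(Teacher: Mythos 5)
The paper does not prove this proposition at all: it is quoted directly from Dahmani--Guirardel--Osin, so there is no internal argument to compare with, and your sketch is essentially the standard Bestvina--Fujiwara/DGO proof (quasi-axis of $g$, Morse stability of quasi-geodesics sharing endpoints at infinity, WPD to bound the coarse stabiliser of the axis, then elementary virtually cyclic algebra for maximality and almost malnormality), which is correct at the level of detail given. Two small points to firm up when writing it out: with your normalisation $h'=hg^{-k(h)}$ the bound $d(y,h'y)\le R_1$ is not immediate from $d(hy,g^{k(h)}y)\le R_1$ (take $h'=g^{-k(h)}h$ instead, or first argue that $h$ acts on $\gamma$ as a coarse translation), and since the WPD definition only provides finiteness for \emph{some} exponent $N$, you should either verify that your estimate $d(g^Ny,h'g^Ny)\le R_2$ holds for all $N\ge 0$ with a uniform constant (it does, via the ray from $h'y$ to $g^{+\infty}$) or note that finiteness of the WPD set passes to larger exponents up to enlarging $R$.
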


\begin{remark}\label{rk:dgo}
Let $g\in G$ be an element which is WPD in $Y$. Notice that by definition, for every $k\in\mathbb{Z}\setminus\{0\}$, one has $E(g^k)=E(g)$. It follows that $E(g)$ is also the unique maximal virtually cyclic subgroup of $G$ that contains $g^k$. So $E(g)$ is the unique maximal virtually cyclic subgroup of $G$ that intersects $\langle g\rangle$ nontrivially. 
\end{remark}

\noindent
Given $g\in G$, we let $$\Fix_{\Aut(G)}(g):=\{\varphi\in\Aut(G) \mid\varphi(g)=g\}$$ and $$\Stab_{\Aut(G)}(E(g)):=\{\varphi\in\Aut(G) \mid \varphi(E(g))=E(g)\}.$$ 
We thank the referee for pointing out an elementary proof of the following fact.

\begin{lemma}\label{lemma:elementary-fixator}
Let $G$ be a group acting by isometries on a hyperbolic space $Y$, and let $g\in G$ be an element which is WPD in $Y$. Then $\Fix_{\Aut(G)}(g)$ is a finite index subgroup in $\Stab_{\Aut(G)}(E(g))$.
\end{lemma}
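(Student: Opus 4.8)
The goal is to show that $\Fix_{\Aut(G)}(g)$ has finite index in $\Stab_{\Aut(G)}(E(g))$. The strategy is to exhibit a homomorphism from $\Stab_{\Aut(G)}(E(g))$ to a finite group whose kernel is contained in $\Fix_{\Aut(G)}(g)$ (or differs from it by finite index), exploiting the fact that $E(g)$ is virtually cyclic by Proposition~\ref{prop:dgo}.

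First I would analyse the structure of $E(g)$. Since $g$ is WPD in $Y$, Proposition~\ref{prop:dgo} tells us $E(g)$ is virtually cyclic and contains $g$ as an infinite-order element. Every virtually cyclic group has a canonical finite normal subgroup $F$ (the set of elements of finite order, or more precisely the maximal finite normal subgroup) with quotient either $\mathbb{Z}$ or the infinite dihedral group $D_\infty$. Any automorphism $\varphi$ of $G$ stabilising $E(g)$ restricts to an automorphism of $E(g)$, hence preserves $F$ and induces an automorphism of $E(g)/F$. I would record that $\langle g\rangle$ maps to a finite-index subgroup of $E(g)$, so some power $g^N$ generates (together with $F$) a characteristic finite-index subgroup, and the image of $g$ in $E(g)/F$ is, up to finite index, canonical up to inversion.

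Next I would build the finite quotient. Consider the composite map $\Stab_{\Aut(G)}(E(g)) \to \Aut(E(g)) \to \Aut(E(g)/F) \times \Aut(F)$; since $F$ is finite and $\Aut(E(g)/F)$ is either $\Aut(\mathbb{Z})=\mathbb{Z}/2$ or $\Aut(D_\infty)$ (which contains $D_\infty$ with finite-index outer part), the image is not itself finite — so a slightly more careful argument is needed. The correct move: let $E_0$ be the unique maximal finite-index subgroup of $E(g)$ isomorphic to $\mathbb{Z}$ if $E(g)/F \cong \mathbb{Z}$ (namely the preimage of a finite-index $\mathbb{Z}$), noting that inner automorphisms of $E(g)$ by elements of $E(g)$ already act on $g$ in a controlled way. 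Actually the cleanest route is: the centraliser $C_{E(g)}(g)$ has finite index in $E(g)$ (since $E(g)$ is virtually cyclic and $g$ has infinite order), and it is characteristic-ish; an element $\varphi \in \Stab_{\Aut(G)}(E(g))$ sends $g$ to some $g' \in E(g)$ with $\langle g'\rangle$ of the same finite index as $\langle g \rangle$, so $g'$ lies in a finite set — the set of elements of $E(g)$ generating a subgroup commensurable to $\langle g\rangle$ with the same translation-type, which is finite because $E(g)/F$ is virtually $\mathbb{Z}$ and the "amount of translation" is determined. Thus the orbit of $g$ under $\Stab_{\Aut(G)}(E(g))$ is finite, and the stabiliser of the point $g$ in that orbit — which is exactly $\Fix_{\Aut(G)}(g)$ — has finite index. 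This is the heart of the argument and the step I expect to be the main obstacle: pinning down precisely why $\{\varphi(g) : \varphi \in \Stab_{\Aut(G)}(E(g))\}$ is a finite subset of $E(g)$. The key point is that $\varphi(g)$ must be an infinite-order element of $E(g)$ whose powers' indices match those of $g$ (because $\varphi$ is an automorphism of $E(g)$, it preserves the "commensurated cyclic core" and translation length up to sign in the $\mathbb{Z}$ or $D_\infty$ quotient), and in a virtually cyclic group only finitely many elements have a prescribed image of prescribed (signed) length in the virtually-$\mathbb{Z}$ quotient modulo the finite group $F$.

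Finally, I would assemble: the map $\varphi \mapsto \varphi(g)$ descends to a map from $\Stab_{\Aut(G)}(E(g))$ into the finite set $S := \{h \in E(g) : h$ has infinite order and $[E(g):\langle F, h^{?}\rangle]$-data matches $g\}$ — more simply, into a finite orbit — and the preimage of the single point $g$ is $\Fix_{\Aut(G)}(g)$, of finite index by orbit–stabiliser. One should double-check that $\Stab_{\Aut(G)}(E(g))$ genuinely acts on this finite set (i.e. $\varphi(g)$ really lands in the fixed finite set $S$ for all such $\varphi$), which again reduces to the structure of $\Aut$ of a virtually cyclic group; this is routine once $S$ is defined as the full (finite) set of elements of $E(g)$ that are images of $g$ under \emph{some} automorphism of $E(g)$, since $\Aut(E(g))$ is itself finite-by-(virtually cyclic) and the relevant orbit $\Aut(E(g)) \cdot g$ is finite because it sits inside the finitely many elements of bounded "length". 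The verification that this orbit is finite is the one genuinely substantive point; everything else is orbit–stabiliser bookkeeping.
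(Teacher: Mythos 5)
Your proposal is correct: the key finiteness claim (that an automorphism of $G$ preserving $E(g)$ restricts to an automorphism of $E(g)$, hence sends $g$ to an infinite-order element with the same index/translation data, and that only finitely many such elements exist in a virtually cyclic group) does hold, and orbit--stabiliser then gives the result. The route differs slightly from the paper's, which is more economical: the paper notes that $E(g)$, being finitely generated, has only finitely many subgroups of index $[E(g):\langle g\rangle]$, so $\Stab_{\Aut(G)}(\langle g\rangle)$ has finite index in $\Stab_{\Aut(G)}(E(g))$, and then $\Fix_{\Aut(G)}(g)$ has index at most $2$ in $\Stab_{\Aut(G)}(\langle g\rangle)$ since $\varphi(g)\in\{g,g^{-1}\}$ for $\varphi$ preserving $\langle g\rangle$. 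This avoids any appeal to the structure theory of virtually cyclic groups (maximal finite normal subgroup $F$, quotient $\mathbb{Z}$ or $D_\infty$), which your argument uses to bound the set of possible images of $g$; your set $S$ of candidate images is a union of at most two cosets of $F$, which is the cleanest way to finish your version and worth stating explicitly. If you keep your route, prune the abandoned first attempt (the map to $\Aut(E(g)/F)\times\Aut(F)$) and the vague remark about the centraliser being ``characteristic-ish''; the final argument stands on its own and does not need them.
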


\begin{proof}
The definition of $E(g)$ ensures that $\Fix_{\Aut(G)}(g)$ is a subgroup of $\Stab_{\Aut(G)}(E(g))$. Since $E(g)$ contains only finitely many subgroups of index $[E(g):\langle g \rangle]$, the subgroup $\Stab_{\Aut(G)}(\langle g\rangle)$, made of automorphisms of $G$ that preserve $\langle g\rangle$, has finite index in $\Stab_{\Aut(G)}(E(g))$. As $\Fix_{\Aut(G)}(g)$ has index at most $2$ in $\Stab_{\Aut(G)}(\langle g\rangle)$, the lemma follows. 
\end{proof}

\noindent
In the case of group actions on simplicial trees, we will adopt the following equivalent definition of the WPD property (we refer to \cite[Corollary 4.3]{MR3368093} for a proof of this equivalence). Let $G$ be a group, let $T$ be a simplicial $G$-tree, and let $L,N \geq 0$. An element $g \in G$ is \emph{$(L,N)$-WPD} in $T$ if $g$ is loxodromic in $T$, and for every arc $I\subseteq\Axis_T(g)$ of length at least $L$, one has $|\Stab_G(I)|\le N$. The element $g$ is WPD in $T$ if there exist $L,N\ge 0$ such that $g$ is $(L,N)$-WPD in $T$. Notice that, if $G$ acts acylindrically on $T$, then its WPD elements are uniformly WPD (i.e.\ with uniform constants $L,N$).

\medskip \noindent 
The next lemma gives a control on the overlap between the axis of a WPD element in a tree $T$ and the characteristic set of every other element.

\begin{lemma}\label{lemma:overlap}
Let $G$ be a group acting by isometries on a simplicial tree $T$, equipped with the simplicial metric where every edge is assigned length $1$. Let $L\ge 1$ and $N\ge 0$, and let $g \in G$ be an element which is $(L,N)$-WPD in $T$. Let $h\in G$ be any element. If $\Axis_T(g)\cap\Char_T(h)$ has length at least $(N+2)L \max\{||g||_T,||h||_T\}$, then $h\in E(g)$.
\end{lemma}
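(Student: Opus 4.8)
The plan is to produce, from a very long overlap, a nonzero integer $k$ with $hg^kh^{-1}=g^k$, which immediately gives $h\in E(g)$. Set $\tau:=\max\{\|g\|_T,\|h\|_T\}$. Since $g$ is loxodromic in the simplicial tree $T$ we have $\|g\|_T\ge 1$, so $\tau\ge 1$. Let $I:=\Axis_T(g)\cap\Char_T(h)$; by hypothesis $I$ is a sub-arc of $\Axis_T(g)$ of length $\ell\ge (N+2)L\tau$ (if $\ell$ is infinite, replace $I$ by a sub-arc of length $(N+2)L\tau$, which is still contained in the convex set $\Char_T(h)$). Since passing from $h$ to $h^{-1}$ changes neither $\Char_T(h)$, nor $\|h\|_T$, nor membership in $E(g)$, I may and do assume, when $h$ is loxodromic, that $g$ and $h$ translate $I$ in the same direction.

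The heart of the argument is that for every $k\ge 0$ the element $\omega_k:=g^{-k}hg^kh^{-1}$ is elliptic in $T$ and fixes a sub-arc $J_k$ of $\Axis_T(g)$ of length at least $\ell-\|h\|_T-k\|g\|_T$, these arcs being nested $J_0\supseteq J_1\supseteq\cdots$. To see this, write $\omega_k=(g^{-k}hg^k)\cdot h^{-1}$ and observe that the characteristic sets of the two factors are $g^{-k}\Char_T(h)\supseteq g^{-k}I$ and $\Char_T(h^{-1})=\Char_T(h)\supseteq I$, while $g^{-k}I$ is the translate of $I$ by $-k\|g\|_T$ along $\Axis_T(g)$, so $g^{-k}I\cap I$ is a sub-arc of $\Axis_T(g)$ of length $\ge\ell-k\|g\|_T$. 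Parametrising $\Axis_T(g)$ by arclength and following the effect of $h^{-1}$, then $g^k$, then $h$, then $g^{-k}$ on a point of $g^{-k}I\cap I$ is a short direct computation: if $h$ is elliptic it fixes $g^{-k}I\cap I$ pointwise, and then $\omega_k$ fixes all of $g^{-k}I\cap I$; if $h$ is loxodromic then $g^{-k}hg^k$ and $h$ translate $g^{-k}I\cap I$ by the same amount $\|h\|_T$, and the computation shows that $\omega_k$ restricts to the identity on the sub-arc of $g^{-k}I\cap I$ obtained by deleting a segment of length $\|h\|_T$ from one end. In either case $J_k$ is this arc, it lies in $\Axis_T(g)$, and the nesting is clear because raising $k$ only shortens $g^{-k}I\cap I$ from the fixed end.

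I then finish by pigeonhole. Put $m:=\lfloor(\ell-\|h\|_T-L)/\|g\|_T\rfloor$. Using $\ell\ge(N+2)L\tau$, $\|h\|_T\le\tau$, $\|g\|_T\le\tau$, $\tau\ge 1$ and $L\ge 1$ one gets
$$\frac{\ell-\|h\|_T-L}{\|g\|_T}\ \ge\ \frac{\ell-\tau-L}{\tau}\ =\ \frac{\ell}{\tau}-1-\frac{L}{\tau}\ \ge\ (N+2)L-1-L\ =\ (N+1)L-1\ \ge\ N,$$
so $m\ge N$. By the choice of $m$ the arc $J_m$ has length $\ge\ell-\|h\|_T-m\|g\|_T\ge L$ and sits in $\Axis_T(g)$, so the $(L,N)$-WPD hypothesis gives $|\Stab_G(J_m)|\le N$. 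On the other hand the $m+1\ge N+1$ elements $\omega_0=1,\omega_1,\dots,\omega_m$ all fix $J_m$ pointwise (since $J_k\supseteq J_m$ for $k\le m$), hence all lie in $\Stab_G(J_m)$. By the pigeonhole principle $\omega_i=\omega_j$ for some $0\le i<j\le m$; cancelling $h^{-1}$ on the right yields $g^{-i}hg^i=g^{-j}hg^j$, so $g^{j-i}$ commutes with $h$, i.e.\ $hg^{j-i}h^{-1}=g^{j-i}$ with $j-i\ne 0$. Therefore $h\in E(g)$.

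I expect the only delicate step to be the verification in the second paragraph that $\omega_k$ fixes a long sub-arc of $\Axis_T(g)$ — essentially a careful tracking of how $h^{-1},g^k,h,g^{-k}$ move points along the two axes, split into the elliptic and loxodromic cases for $h$ and using that $g$ preserves the orientation of $\Axis_T(g)$. Everything else is bookkeeping, and the point of the constant $(N+2)L\max\{\|g\|_T,\|h\|_T\}$ in the statement is precisely that it makes the count $m\ge N$ go through.
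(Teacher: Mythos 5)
Your proof is correct and follows essentially the same route as the paper: you show that the commutators $g^{-k}hg^kh^{-1}$ (for $k=0,\dots,m$ with $m\ge N$) all fix a common subsegment of $\Axis_T(g)$ of length at least $L$, then invoke the $(L,N)$-WPD property and the pigeonhole principle to find $i\ne j$ with $g^{-i}hg^i=g^{-j}hg^j$, so a nonzero power of $g$ commutes with $h$ and $h\in E(g)$. The only cosmetic difference is that you treat the elliptic and loxodromic cases for $h$ uniformly via the arclength bookkeeping, whereas the paper handles them separately; the underlying mechanism is identical.
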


\begin{proof}
First assume that $h$ acts elliptically on $T$, and fixes a segment that contains $(N+1)L$ fundamental domains of the axis of $g$. Then there exists a nondegenerate segment $I\subseteq T$ which contains $L$ fundamental domains of the action of $\langle g \rangle$ on the axis of $g$, and which is fixed by all elements $g^ihg^{-i}$ with $i\in\{0,\dots,N\}$. As every edge of $T$ has length $1$, the segment $I$ has length at least $L$. As $g$ is $(L,N)$-WPD in $T$, this implies that there exist $i\neq j$ such that $g^ihg^{-i}=g^jhg^{-j}$. In other words $g^{j-i}$ commutes with $h$, so $h\in E(g)$.

\medskip\noindent
Now assume that $h$ acts loxodromically on $T$. If $\Axis_T(g)\cap\Axis_T(h)$ contains $(N+2)L$ fundamental domains of both the axes of $g$ and $h$, then the commutators $[g^i,h]$, with $0 \leq i \leq N$, all fix a segment that contains at least $L$ fundamental domains of the axis of $g$. As above, there must exist $i \neq j$ such that
$$g^ihg^{-i} h^{-1}= [g^i,h]= [g^j,h]= g^jhg^{-j}h^{-1},$$
hence $g^{i-j}hg^{-(i-j)} = h$, and finally $h\in E(g)$. 
\end{proof}

\section{A general criterion for acylindrical hyperbolicity of $\Aut(G)$}\label{sec:criterion}

The goal of the present section is to establish a general criterion (Proposition~\ref{prop:SimpleOne} below) ensuring that the automorphism group of a given group $G$ is acylindrically hyperbolic. This criterion requires having a $G$-action on a simplicial tree, and a control on patterns of intersections of axes of certain sufficiently generic elements of $G$. 

\subsection{The persistence of long intersections property}

The following definition will be crucial throughout the paper.

\begin{de}\label{de:Persistence}
Let $G$ be a group, let $T$ be a $G$-tree, and let $g\in G$ be a $T$-loxodromic element. We say that $g$ has the \emph{persistence of long intersections property in $T$} if for every $C \geq 1$, there exists $n(C) \geq 1$ such that for every automorphism $\varphi \in \Aut(G)$ and every subset $\mathcal{X}\subseteq G$, if all elements in $\mathcal{X}$ are $T$-loxodromic and $$\Axis_T(g)\cap\bigcap_{h\in\mathcal{X}}\Axis_T(h)$$ contains a segment of length at least $n(C) \|g\|_T$, then all elements in $\varphi(\mathcal{X})$ are $T$-loxodromic and $$\Axis_T(\varphi(g))\cap\bigcap_{h\in\mathcal{X}}\Axis_T(\varphi(h))$$ contains a segment of length at least $C\| \varphi(g) \|_T$.
\end{de}

\subsection{A general criterion}

Given a subgroup $\mathscr{A}\subseteq\Aut(G)$, we let $\Stab_{\mathscr{A}}(E(g)):=\{\varphi\in\mathscr{A}\mid\varphi(E(g))= E(g)\}$. Given an element $g\in G$, we denote by $\ad_g\in\mathrm{Inn}(G)$ the conjugation by the element $g$.

\begin{prop}\label{prop:SimpleOne}
Let $G$ be a group and $\mathscr{A} \subseteq \mathrm{Aut}(G)$ a subgroup such that $\mathscr{A}$ is not virtually cyclic. Assume that there exist an element $g\in G$ with $\ad_g\in\mathscr{A}$, and a nonelementary simplicial minimal $G$-tree $T$ with the following properties:
\begin{enumerate}
	\item \textbf{Stable WPD:} There exist $L,N \geq 0$ such that, for every automorphism $\varphi \in \mathscr{A}$, the element $\varphi(g)$ is $(L,N)$-WPD in $T$.
	\item \textbf{Elementary fixator:} $\langle \mathrm{ad}_g \rangle$ has finite index in $\{ \varphi \in \mathscr{A} \mid \varphi(g)=g\}$.
	\item \textbf{Nielsen realisation:} $\langle \Stab_\mathscr{A}(E(g)) , \mathrm{Inn}(G)\rangle$ admits an action on $T$ whose restriction to $\mathrm{Inn}(G)$ coincides with the action induced by $\mathrm{Inn}(G) \simeq G/Z(G) \curvearrowright T$ (well-defined according to Lemma \ref{lem:center}).
	\item \textbf{Persistence of long intersections:}
All elements $g'\in\mathscr{A}\cdot g$ have the persistence of long intersections property in $T$.
\end{enumerate}
Then $\mathscr{A}$ is acylindrically hyperbolic. Moreover, $\mathrm{ad}_g$ is a generalised loxodromic element of $\mathscr{A}$. 
\end{prop}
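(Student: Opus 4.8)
The plan is to apply the Bestvina--Bromberg--Fujiwara--Sisto machinery (Theorem~\ref{thm:BBF}) to a well-chosen collection $\mathbb{Y}$ of subtrees (or quasi-lines) of $T$ on which $\mathscr{A}$ acts. The natural choice is to let $\mathbb{Y}$ be the $\mathscr{A}$-orbit of $\Axis_T(g)$: that is, $\mathbb{Y} = \{\Axis_T(\varphi(g))\cdot Z(G) : \varphi\in\mathscr{A}\}$, viewed as a collection of metric spaces (lines) carrying an $\mathscr{A}$-action that is metric-preserving via the isometries $T\to T$ induced by inner automorphisms together with the realisation furnished by the \textbf{Nielsen realisation} hypothesis; more precisely, for $\varphi\in\mathscr{A}$ one wants a group element/isometry realizing $\varphi$ on $T$, which is exactly what hypothesis~(3) gives on $\Stab_\mathscr{A}(E(g))$, and in general one combines $\varphi$ with an inner automorphism using that $\ad_g\in\mathscr{A}$. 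On each such line $Y = \Axis_T(\varphi(g))$, the stabiliser $\Stab_\mathscr{A}(Y)$ contains $\langle\ad_{\varphi(g)}\rangle$ (after conjugating hypothesis~(2)), which translates along $Y$, so hypothesis~(2) combined with Lemma~\ref{lemma:elementary-fixator} gives that $\Stab_\mathscr{A}(Y)$ is virtually cyclic with unbounded orbits on $Y$. This yields the \textbf{Unboundedness} axiom, and makes \textbf{Local acylindricity} and \textbf{Global acylindricity} (the pointwise stabiliser of enough lines being finite) straightforward from almost-malnormality of $E(g)$ (Proposition~\ref{prop:dgo}), the \textbf{Stable WPD} hypothesis giving uniform WPD constants, and Lemma~\ref{lemma:overlap}.

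The heart of the argument is verifying the \textbf{Projection axioms} (P0)--(P2). One defines $\pi_Y(Z)$ to be (a bounded neighbourhood of) the closest-point projection in $T$ of the line $Z$ onto the line $Y$; since $T$ is a tree this projection is a point or a segment, and $d_Y(X,Z)$ measures the diameter of the union of two such projections. Axiom (P0) is immediate. The Behrstock-type inequality (P1) — if $Z$ projects far along $Y$ and far from $X$, then $Y$ and $Z$ project close together onto $X$ — follows from elementary tree geometry (configuration of three lines in a tree, with the projection to $X$ controlled by where the bridge sits). The key finiteness axiom (P2) — only finitely many $U\in\mathbb{Y}$ have $d_U(X,Z)>\theta$ — is where the \textbf{Persistence of long intersections} hypothesis is indispensable. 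The idea is: if $d_U(X,Z)$ is large for $U=\Axis_T(\psi(g))$, then the lines $X$ and $Z$ both fellow-travel $U$ along a long segment, hence $X$ and $Z$ have a long common overlap with $\Axis_T(\psi(g))$; applying persistence to $\psi^{-1}$ one transports this to a long overlap of $\psi^{-1}(X)$, $\psi^{-1}(Z)$ with $\Axis_T(g)$ itself. Since $X = \Axis_T(\alpha(g))$ and $Z = \Axis_T(\beta(g))$ for fixed $\alpha,\beta$, there are only finitely many possibilities for $\psi^{-1}$ modulo $E(g)$ (by Lemma~\ref{lemma:overlap} forcing the relevant elements into $E(g)$, and almost-malnormality), hence finitely many $U$.

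I expect axiom (P2) to be the main obstacle: one must carefully translate "large $d_U(X,Z)$" into "long triple intersection of axes", choosing $\theta$ and the persistence constant $n(C)$ compatibly, and then run the limiting/persistence argument and the counting via $E(g)$ simultaneously for all three lines. There is also bookkeeping to make the $\mathscr{A}$-action on $\mathbb{Y}$ genuinely metric-preserving with the cocycle condition $\iota_g^{h\cdot Y}\circ\iota_h^Y = \iota_{gh}^Y$, which requires choosing the realizing isometries consistently — here hypothesis~(3) and the inclusion $\langle\ad_g\rangle\subseteq\mathscr{A}$ do the work, since modulo inner automorphisms one only needs to realize $\Stab_\mathscr{A}(E(g))$, and inner automorphisms act on $T$ canonically by Lemma~\ref{lem:center}. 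Once all five conditions of Theorem~\ref{thm:BBF} are checked, that theorem gives an acylindrical $\mathscr{A}$-action on a hyperbolic space on which every $Y$-loxodromic element of $\Stab_\mathscr{A}(Y)$ — in particular $\ad_g$ — acts loxodromically; since $\mathscr{A}$ is assumed not virtually cyclic, it is acylindrically hyperbolic, and $\ad_g$ is a generalised loxodromic element.

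\begin{remark}
The only subtlety in identifying $\Stab_\mathscr{A}(Y)$ for $Y=\Axis_T(g)$ with (a finite extension of) $\langle\ad_g\rangle$ is that an element of $\Stab_\mathscr{A}(Y)$ need not fix $g$ but at worst permutes the finitely many loxodromics with axis $Y$ that lie in $E(g)$; by Remark~\ref{rk:dgo} this is governed by $\Stab_\mathscr{A}(E(g))$, and hypothesis~(2) together with Lemma~\ref{lemma:elementary-fixator} closes the gap.
\end{remark}
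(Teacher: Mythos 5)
Your overall strategy (run Theorem~\ref{thm:BBF} on the $\mathscr{A}$-orbit of $\Axis_T(g)$, with projections given by closest-point projection in $T$, and with the persistence hypothesis supplying finiteness) is the right one in spirit, but there is a genuine gap at the very first step. You claim that the Nielsen realisation hypothesis, ``combined with an inner automorphism using that $\ad_g\in\mathscr{A}$'', lets you realise an arbitrary $\varphi\in\mathscr{A}$ as an isometry of $T$, and hence makes the $\mathscr{A}$-action on the collection of axes metric-preserving with $T$-equivariant projections. Hypothesis~(3) only provides an action of $\langle\Stab_\mathscr{A}(E(g)),\mathrm{Inn}(G)\rangle$ on $T$; by hypotheses~(1)--(2) and Lemma~\ref{lemma:elementary-fixator} this subgroup has finite image in $\Out(G)$, so in the interesting cases (e.g.\ $\mathscr{A}=\Aut(\mathbb{F}_n)$) it has infinite index in $\mathscr{A}$, and a general $\varphi\in\mathscr{A}$ does not act on $T$ at all --- if all of $\mathscr{A}$ did act, one would be in the much simpler setting of Proposition~\ref{prop:JSJcanonical} and the persistence hypothesis would be superfluous. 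Concretely, there is no system of isometries $\Axis_T(g)\to\Axis_T(\varphi(g))$ compatible with the ambient closest-point projections: applying $\varphi$ can completely change the configuration in $T$ of the axes $\Axis_T(\alpha(g))$ and $\Axis_T(\beta(g))$, since only sufficiently long overlaps persist, and only coarsely, with the constants of hypothesis~(4). So the equivariance axiom of Theorem~\ref{thm:BBF} fails for naive closest-point projections between image axes, and (P1) is not ``elementary tree geometry'': the quantities $d_Y(X,Z)$ for $Y=\Axis_T(\varphi(g))$ and for $Y'=\Axis_T(\psi(g))$ are computed in different ``frames'', and relating them is exactly where persistence must be used --- not only in (P2), and also already in (P0).

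The paper circumvents this by not taking the spaces to be subsets of $T$: it sets $Y_\varphi=\left\{[\rho_0\circ\varphi^{-1},x]\mid x\in\Axis(g)\right\}$, an abstract copy of $\Axis(g)$ labelled by the twisted basepointed action, and defines $\pi_{Y_\varphi}(Y_\psi)$ as the closest-point projection of $\Axis(\varphi^{-1}\psi(g))$ onto $\Axis(g)$ inside $T$ for the original action. With this definition the metric-preserving action and the equivariance of $\pi$ hold by construction; well-definedness of the spaces and projections is the content of a claim characterising when $Y_\varphi=Y_\psi$ (equivalently $\varphi^{-1}\psi\in\Stab_\mathscr{A}(E(g))$), proved using the WPD hypothesis, Lemma~\ref{lemma:overlap}, Remark~\ref{rk:dgo}, persistence and Nielsen realisation; and then (P0), (P1), (P2) are all established using persistence together with Lemmas~\ref{lemma:culler-morgan}, \ref{lemma:action-on-tree} and \ref{lemma:tree-extra}, with careful bookkeeping of the constant $\theta$ (note also that the persistence constant may depend on the element of $\mathscr{A}\cdot g$, which forces the reduction to the frame of $g$ itself). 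Your treatment of unboundedness, local acylindricity and global acylindricity (via hypothesis~(2), Lemma~\ref{lemma:elementary-fixator}, Proposition~\ref{prop:dgo} and Remark~\ref{rk:dgo}) is essentially correct once the spaces and projections are set up properly, and matches the paper's argument.
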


\begin{remark}
Let us comment on the terminology \emph{Nielsen realisation} in this definition. As a consequence of the \emph{stable WPD} and \emph{elementary fixator} properties and Lemma~\ref{lemma:elementary-fixator}, the image of $\langle \Stab_\mathscr{A}(E(g)) , \mathrm{Inn}(G) \rangle$ in $\Out(G)$ is finite. Therefore, we are requiring that $T$ be an invariant tree for a certain finite subgroup $H$ of $\Out(G)$. When $G=\mathbb{F}_n$, the existence of a free simplicial $\mathbb{F}_n$-tree which is $H$-invariant (proved in \cite{Cul}) is the natural $\Out(\mathbb{F}_n)$-analogue of the classical Nielsen realization problem asking whether every finite subgroup of the mapping class group of a finite-type hyperbolic surface is realizable as the group of isometries of a certain hyperbolic surface (positively answered by Kerckhoff in \cite{Ker}).
\end{remark}

\begin{remark}
We warn the reader that in Assumption~4, the constant $n(C)$ that appears in the persistence of long intersections property is allowed to depend on $g'$. 
\end{remark}

\begin{proof}
We denote by $\rho_0:G\to\Isom(T)$ the given $G$-action on $T$. Let $\mathscr{D}$ be the set of all pairs $(\rho,x)$, where $\rho : G \to \mathrm{Isom}(T)$ is a $G$-action by isometries on $T$ and $x \in T$ is a basepoint, up to the following equivalence relation: $(\rho_1,x_1) \sim (\rho_2,x_2)$ if there exists an isometry $\iota:T\to T$ sending $x_1$ to $x_2$ and which is \emph{$(\rho_1,\rho_2)$-equivariant}, meaning that for every $x\in T$ and every $h\in G$, one has $\rho_2(h)(\iota(x))=\iota(\rho_1(h)(x))$. Given a pair $(\rho,x)$ as above, we denote by $[\rho,x]$ the equivalence class of $(\rho,x)$. The group $\mathscr{A}$ acts on $\mathscr{D}$ via $\varphi \cdot [\rho,x] = [\rho \circ \varphi^{-1},x]$. 

\medskip\noindent
We observe that given a homomorphism $\rho:G\to\Isom(T)$ and two distinct points $x_1,x_2\in T$, the pairs $(\rho,x_1)$ and $(\rho,x_2)$ are not equivalent: indeed, otherwise, there would exist an isometry $\iota:T\to T$ sending $x_1$ to $x_2$ and commuting with the $\rho$-action, contradicting Lemma~\ref{lem:center}.

\medskip \noindent
Given a $G$-action $\rho:G\to\mathrm{Isom}(T)$ and an element $h\in G$ which is loxodromic in $T$ for the $\rho$-action, we denote by $\Axis_\rho(h)$ the axis of $h$ in $T$ for the $\rho$-action. For simplicity of notation, we will simply write $\Axis(h)$ for $\Axis_{\rho_0}(h)$. 
For every $\varphi \in \mathscr{A}$, set $$Y_\varphi := \left\{ \left[\rho_0 \circ \varphi^{-1}, x\right] \mid x \in \Axis(g) \right\}\subseteq\cald,$$
where $g$ is the element coming from our assumptions. In view of the observation made in the above paragraph, every element of $Y_\varphi$ is written as $\left[\rho_0 \circ \varphi^{-1}, x\right]$ for a unique $x\in\Axis(g)$. We can (and shall) therefore endow $Y_\varphi$ with the metric 
$$\lambda_\varphi : ( [\rho_0 \circ \varphi^{-1}, x_1], [\rho_0 \circ \varphi^{-1} ,x_2]) \mapsto d_T(x_1,x_2).$$  We claim that $\lambda_\varphi$ only depends on $Y_\varphi$, and not on the choice of $\varphi$ itself. Indeed, if $Y_\varphi=Y_\psi$ and if we have $[\rho_0\circ\varphi^{-1},x_1]=[\rho_0\circ\psi^{-1},y_1]$ and $[\rho_0\circ\varphi^{-1},x_2]=[\rho_0\circ\psi^{-1},y_2]$, then there exists a $(\rho_0\circ\varphi^{-1},\rho_0\circ\psi^{-1})$-equivariant isometry $\iota:T\to T$ sending $x_1$ to $y_1$. Then $[\rho_0\circ\varphi^{-1},x_2]=[\rho_0\circ\psi^{-1},\iota(x_2)]$, showing that $y_2=\iota(x_2)$. As $\iota$ is an isometry of $T$, we have $d_T(x_1,x_2)=d_T(\iota(x_1),\iota(x_2))=d_T(y_1,y_2)$, showing that $\lambda_\varphi=\lambda_\psi$. 

\medskip\noindent
In particular, all spaces $Y_\varphi$ are isometric to the real line, whence $0$-hyperbolic. Our goal is to apply the criterion coming from the Bestvina--Bromberg--Fujiwara construction (Theorem~\ref{thm:BBF}) to the collection of all metric spaces $Y_{\varphi}$. 
Notice that for all $\varphi,\xi\in\mathscr{A}$, we have $\xi\cdot Y_\varphi=Y_{\xi\varphi}$. In addition, for all $x_1,x_2\in \Axis(g)$, we have 
$$\begin{array}{lcl} \lambda_{\xi \varphi} \left( \xi \cdot [\rho_0 \circ \varphi^{-1},x_1], \xi \cdot [\rho_0 \circ \varphi^{-1},x_2] \right) & = & \lambda_{\xi \varphi} \left( [\rho_0 \circ \varphi^{-1} \circ \xi^{-1},x_1],  [\rho_0 \circ \varphi^{-1} \circ \xi^{-1},x_2] \right) \\ \\ & = & d_T(x_1,x_2) \\ \\ & = & \lambda_{\varphi} \left( [\rho_0 \circ \varphi^{-1},x_1], [\rho_0 \circ \varphi^{-1},x_2] \right).  \end{array}$$
This precisely means that $\mathscr{A}$ acts on $\mathbb{Y}:=\{(Y_\varphi, \lambda_\varphi) \mid \varphi \in \mathscr{A}\}$ in a metric-preserving way (in the sense recalled above the statement of Theorem~\ref{thm:BBF}): indeed, this is shown by defining $\iota_\xi^{Y_\varphi}:Y_\varphi\to\xi\cdot Y_\varphi$ via $$\iota_\xi^{Y_\varphi}([\rho_0\circ\varphi^{-1},x])=[\rho_0\circ\varphi^{-1}\circ\xi^{-1},x]$$ (that this is well-defined follows from the observation made in the second paragraph of this proof). 

\medskip\noindent Recall from Assumption~1 (Stable WPD) that all elements of $G$ in the $\mathscr{A}$-orbit of $g$ are loxodromic in $T$ for the $\rho_0$-action. The following claim describes how the set $Y_{\varphi}$ depends on $\varphi$.

\begin{claim}\label{claim:equality-Y}
For all $\varphi_1,\varphi_2\in\mathscr{A}$, the following statements are equivalent.
\begin{enumerate}
\item $Y_{\varphi_1}=Y_{\varphi_2}$,
\item $\varphi_1(g)$ and $\varphi_2(g)$ have the same axis in $T$ for the $\rho_0$-action,
\item $g$ and $\varphi_1^{-1}\varphi_2(g)$ have the same axis in $T$ for the $\rho_0$-action,
\item $\varphi_1^{-1}\varphi_2(g)\in E(g)$,
\item $\varphi_1^{-1}\varphi_2\in\Stab_{\mathscr{A}}(E(g))$. 
\end{enumerate}
\end{claim}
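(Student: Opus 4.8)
The plan is to prove the five equivalences in a cycle, exploiting the fact that $Y_\varphi$ is essentially the data of the axis of $\varphi(g)$ in $T$ together with the marking $\rho_0\circ\varphi^{-1}$, and that $E(g)$ is precisely the set of elements sharing (a power of) $g$'s axis by Proposition~\ref{prop:dgo} and Remark~\ref{rk:dgo}. I would run the chain $(1)\Leftrightarrow(2)$, $(2)\Leftrightarrow(3)$, $(3)\Leftrightarrow(4)$, $(4)\Leftrightarrow(5)$.

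For $(1)\Leftrightarrow(2)$: by definition $Y_{\varphi_i}=\{[\rho_0\circ\varphi_i^{-1},x]\mid x\in\Axis(g)\}$. If $\varphi_1(g)$ and $\varphi_2(g)$ have the same axis in $T$ for $\rho_0$, then I claim the identity map of $T$ is a $(\rho_0\circ\varphi_1^{-1},\rho_0\circ\varphi_1^{-1})$-equivariant isometry trivially, but the real point is to produce an equivariant isometry between the two markings — here I should instead observe that $[\rho_0\circ\varphi_1^{-1},x]=[\rho_0\circ\varphi_2^{-1},y]$ iff there is a $(\rho_0\circ\varphi_1^{-1},\rho_0\circ\varphi_2^{-1})$-equivariant isometry of $T$, i.e. an isometry $\iota$ with $\iota\circ\rho_0(\varphi_1^{-1}(h))=\rho_0(\varphi_2^{-1}(h))\circ\iota$ for all $h$. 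Setting $\psi=\varphi_1^{-1}\varphi_2$, such $\iota$ conjugates the $\rho_0$-action to itself precomposed by $\psi^{-1}$ (restricted appropriately), and by minimality plus Lemma~\ref{lem:center}, $\iota$ is uniquely determined; $Y_{\varphi_1}=Y_{\varphi_2}$ holds iff such an $\iota$ exists sending $\Axis(g)$ to $\Axis(g)$ setwise. Since $\rho_0\circ\varphi_i^{-1}$ sends $g\mapsto\rho_0(\varphi_i^{-1}(g))$... I will need to be a bit careful: the cleanest route is to note $[\rho_0\circ\varphi_1^{-1},x]$ ranges over $Y_{\varphi_1}$ with $x\in\Axis(g)$, and applying the $\mathscr{A}$-action one may reduce to $\varphi_1=\mathrm{id}$, so that $(1)$ becomes $Y_{\mathrm{id}}=Y_\psi$ where $\psi=\varphi_1^{-1}\varphi_2$; then $Y_\psi=\{[\rho_0\circ\psi^{-1},x]\mid x\in\Axis(g)\}$ and using that $\rho_0\circ\psi^{-1}$ is conjugate to $\rho_0$ via no isometry in general, one checks $Y_{\mathrm{id}}=Y_\psi$ iff $\Axis_{\rho_0}(\psi(g))=\Axis_{\rho_0}(g)$ (this is precisely $(3)$). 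Indeed $[\rho_0,x]=[\rho_0\circ\psi^{-1},y]$ requires a $(\rho_0,\rho_0\circ\psi^{-1})$-equivariant isometry, which exists iff $\psi$ acts on $T$ (Nielsen realisation, Assumption~3, for $\psi\in\Stab_\mathscr{A}(E(g))$) — but to avoid circularity I will phrase $(1)\Leftrightarrow(3)$ directly via the unparametrized axes, since two points $[\rho_0\circ\varphi_1^{-1},x]$, $[\rho_0\circ\varphi_2^{-1},x]$ with the same $x$ are equal iff there is a marking-compatible isometry fixing $x$; summing over $x$, equality of the sets is equivalent to equality of $\Axis_{\rho_0}(\varphi_1(g))$ and $\Axis_{\rho_0}(\varphi_2(g))$. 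The translation $(2)\Leftrightarrow(3)$ is immediate by applying $\rho_0(\varphi_1^{-1})$, an isometry of $T$: it sends $\Axis_{\rho_0}(\varphi_1(g))\to\Axis_{\rho_0}(g)$ and $\Axis_{\rho_0}(\varphi_2(g))\to\Axis_{\rho_0}(\varphi_1^{-1}\varphi_2(g))$ (using that $\rho_0$ is a genuine $G$-action so $\rho_0(a)\,\mathrm{Axis}_{\rho_0}(b)=\mathrm{Axis}_{\rho_0}(aba^{-1})$, and $\varphi_1^{-1}(\varphi_1(g))=g$ etc.).

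For $(3)\Leftrightarrow(4)$: this is where Proposition~\ref{prop:dgo} and Remark~\ref{rk:dgo} enter, together with the Stable WPD hypothesis (Assumption~1) which guarantees $g$ is WPD in $T$ so that $E(g)$ is the unique maximal virtually cyclic subgroup of $G$ meeting $\langle g\rangle$ nontrivially. If $g$ and $h:=\varphi_1^{-1}\varphi_2(g)$ have the same $\rho_0$-axis $\ell$, then $\langle g,h\rangle$ stabilises $\ell$, acts on it by translations, hence has an infinite cyclic quotient with finite (in fact, via the WPD arc-stabilizer bound, controlled) kernel acting trivially on a long arc — more cleanly: $E(g)$ being almost malnormal and maximal, and $g,h$ both loxodromic with axis $\ell$, the overlap $\Axis(g)\cap\Axis(h)=\ell$ is unbounded, so by Lemma~\ref{lemma:overlap} applied with the $(L,N)$-WPD constants from Assumption~1 we get $h\in E(g)$. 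Conversely if $h\in E(g)$, then some nonzero powers satisfy $h g^n h^{-1}=g^m$ with $g$ loxodromic, forcing $h\cdot\Axis(g)=\Axis(g)$, i.e. $\Axis(h)=\Axis(g)$ (as $h$ preserves the axis of $g$ and is loxodromic itself, sharing it). For $(4)\Leftrightarrow(5)$: $\varphi_1^{-1}\varphi_2\in\Stab_\mathscr{A}(E(g))$ means $\varphi_1^{-1}\varphi_2(E(g))=E(g)$; since $\varphi_1^{-1}\varphi_2(E(g))=E(\varphi_1^{-1}\varphi_2(g))$ (automorphisms conjugate $E(\cdot)$ to $E(\cdot)$ of the image), and $E(\varphi_1^{-1}\varphi_2(g))$ is the unique maximal virtually cyclic subgroup containing $\varphi_1^{-1}\varphi_2(g)$, we get: $\varphi_1^{-1}\varphi_2(E(g))=E(g)$ iff $\varphi_1^{-1}\varphi_2(g)\in E(g)$ (if it lies in $E(g)$ then its $E$ equals $E(g)$ by Remark~\ref{rk:dgo}, and conversely if the images of $E(g)$ agree then $\varphi_1^{-1}\varphi_2(g)\in\varphi_1^{-1}\varphi_2(E(g))=E(g)$). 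The main obstacle I anticipate is the careful bookkeeping in $(1)\Leftrightarrow(3)$: one must argue cleanly that equality of the two \emph{parametrized} subsets $Y_{\varphi_i}$ of $\mathscr{D}$ is equivalent to equality of the two \emph{unparametrized} axes in $T$, which relies on Lemma~\ref{lem:center} to kill the ambiguity of the equivariant isometry and on the second paragraph of the proof (non-equivalence of distinct basepoints) to pin down the parametrization — getting the quantifiers over $x\in\Axis(g)$ right, rather than any genuinely deep point.
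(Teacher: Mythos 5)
There is a genuine gap, and it sits exactly at the point the hypotheses of Proposition~\ref{prop:SimpleOne} are designed to handle: you repeatedly treat an arbitrary automorphism $\varphi_1^{-1}\in\mathscr{A}$ as if it were realised by an isometry of $T$. The expression ``$\rho_0(\varphi_1^{-1})$, an isometry of $T$'' is not defined: $\rho_0$ is a homomorphism $G\to\Isom(T)$ and $\varphi_1^{-1}$ is an automorphism of $G$, not an element of $G$. In general no isometry of $T$ intertwines the $G$-action with its precomposition by $\varphi_1^{-1}$; such a realisation is available only for inner automorphisms and, by Assumption~3 (Nielsen realisation), for elements of $\langle\Stab_{\mathscr{A}}(E(g)),\mathrm{Inn}(G)\rangle$ --- and membership of $\varphi_1^{-1}\varphi_2$ in $\Stab_{\mathscr{A}}(E(g))$ is precisely assertion~5, which you cannot assume when proving $2\Leftrightarrow 3$. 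This is why the paper derives $2\Leftrightarrow 3$ from Assumption~4 (Persistence of long intersections) rather than by ``conjugating by $\varphi_1^{-1}$''; an alternative correct route is to pass through assertion~4 in both directions, using the stable WPD hypothesis and Lemma~\ref{lemma:overlap} (equal axes give infinite overlap, hence $\varphi_2(g)\in E(\varphi_1(g))=\varphi_1(E(g))$, and conversely common nonzero powers give equal axes), but your ``immediate'' argument as written would fail.

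The same issue undermines your direct proof of $1\Leftrightarrow 3$. Equality $[\rho_0\circ\varphi_1^{-1},x]=[\rho_0\circ\varphi_2^{-1},y]$ in $\mathscr{D}$ requires the existence of a $(\rho_0\circ\varphi_1^{-1},\rho_0\circ\varphi_2^{-1})$-equivariant isometry of $T$, i.e.\ an isometric realisation of $\varphi_2^{-1}\varphi_1$ compatible with the action; ``summing over $x$'' cannot manufacture such an isometry from the mere equality of the unparametrised axes, so the implication $3\Rightarrow 1$ is not established by your argument (only $1\Rightarrow 3$ can be extracted from the existence of the isometries $i_x$, as in the paper). You noticed the circularity yourself, but the workaround does not close it: the realising isometry $I_{\varphi}$ exists only after one knows $\varphi_1^{-1}\varphi_2\in\Stab_{\mathscr{A}}(E(g))$. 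The paper's proof is structured exactly to avoid this, proving $1\Rightarrow 3$ directly, then $3\Leftrightarrow 4$ (WPD plus Lemma~\ref{lemma:overlap}), $4\Leftrightarrow 5$ (maximality of $E(g)$, Remark~\ref{rk:dgo}), and only then $5\Rightarrow 1$ via Assumption~3. Your treatment of $3\Leftrightarrow 4$ and $4\Leftrightarrow 5$ is essentially the paper's and is fine; the cycle you propose, however, cannot be completed without rerouting the two steps above.
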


\begin{proof}[Proof of Claim~\ref{claim:equality-Y}]
The equivalence $2\Leftrightarrow 3$ follows from Assumption~4 (Persistence of long intersections). We will now prove the equivalence of Assertions~1,~3,~4 and~5, and for that we can (and shall) assume without loss of generality that $\varphi_1=\id$; for simplicity of notation, we also replace $\varphi_2$ with $\varphi$. 

\medskip\noindent
We first prove that $1\Rightarrow 3$. Assume that $Y_\mathrm{Id}= Y_{\varphi}$. Then there exists a bijective map $p : \Axis(g) \to \Axis(g)$ such that, for every $x \in \Axis(g)$, we have $(\rho_0 \circ \varphi^{-1},x) \sim (\rho_0, p(x))$. In other words, there exists a $(\rho_0\circ\varphi^{-1},\rho_0)$-equivariant isometry $i_x: T \to T$ sending $x$ to $p(x)$. Notice that, for every $g'\in\mathscr{A}\cdot g$ and every $x \in \Axis(g')$, the isometry $i_x$ sends a $\langle \rho_0 \circ \varphi^{-1}(g') \rangle$-invariant geodesic to a $\langle \rho_0(g') \rangle$-invariant geodesic, hence $i_x \left( \Axis_{\rho_0 \circ \varphi^{-1}}(g') \right)= \Axis(g')$. As a consequence, for every $x\in\Axis(g)$, the element $p(x)=i_x(x)$ belongs to $i_x \left( \Axis(g) \right) = i_x \left( \Axis_{\rho_0 \circ \varphi^{-1}}(\varphi(g)) \right) = \Axis(\varphi(g)).$
Hence $$\Axis(g) = \{ p(x) \mid x \in \Axis(g) \} \subseteq \Axis(\varphi(g)),$$
and finally $\Axis(g)= \Axis(\varphi(g))$ (if one axis is contained in the other, then they are equal). In other words, $g$ and $\varphi(g)$ have the same axis in $T$ for the $\rho_0$-action. 

\medskip\noindent
We now prove that $3\Leftrightarrow 4$. The implication $3\Rightarrow 4$ follows from the fact that $g$ is WPD in $T$ (Assumption~1) together with Lemma~\ref{lemma:overlap} (applied with $h=\varphi(g)$). For the converse implication, as $g$ and $\varphi(g)$ have infinite order and both belong to the virtually cyclic group $E(g)$, there exist non-zero integers $n,m$ such that $g^n=\varphi(g)^m$, which implies that $g$ and $\varphi(g)$ have the same axis. 

\medskip\noindent
We now prove that $4\Leftrightarrow 5$. The implication $5 \Rightarrow 4$ is clear. To prove the converse $4\Rightarrow 5$, if $\varphi(g)\in E(g)$, then $\varphi(g)$ has a power contained in $\langle g\rangle$. In other words $\varphi(E(g))$ is a virtually cyclic subgroup that intersects $\langle g\rangle$ nontrivially. As $E(g)$ is the maximal virtually cyclic subgroup of $G$ that intersects $\langle g\rangle$ nontrivially (see Remark~\ref{rk:dgo}), it follows that $\varphi(E(g))= E(g)$, as desired. 

\medskip\noindent
There remains to prove that $5\Rightarrow 1$. By Assumption~3 (Nielsen realisation), the automorphism $\varphi$ defines an isometry $I_{\varphi}$ of $T$ which extends the $G$-action. For every $h\in G$ and every $y\in T$, one then has $I_{\varphi}(hy)=\varphi(h)I_\varphi(y)$. It follows that $(\rho_0,x) \sim (\rho_0 \circ \varphi^{-1}, I_\varphi^{-1}(x))$ for every $x \in \Axis(g)$, because $z \mapsto I_\varphi^{-1}(z)$ defines a $(\rho_0,\rho_0\circ\varphi^{-1})$-equivariant isometry $T \to T$. Since $I_\varphi^{-1}$ sends $\Axis(g)$ to $\Axis(\varphi^{-1}(g)) $, and since $\Axis(\varphi^{-1}(g)) = \Axis(g)$ (as a consequence of $4 \Rightarrow 3$), we deduce that $Y_{\id}=Y_{\varphi}$ as desired.
\end{proof}

\noindent For every $g'\in\mathscr{A}\cdot g$, we let $\mathrm{proj}_{\Axis(g)}\Axis(g')$ be the closest-point projection of $\Axis(g')$ onto $\Axis(g)$: this is a subinterval of $\Axis(g)$.  
Given $\varphi,\psi\in\mathscr{A}$, set
$$\pi_{Y_\varphi}(Y_\psi):= \left\{ [\rho_0 \circ \varphi^{-1},x] \mid x \in \mathrm{proj}_{\Axis(g)} \Axis(\varphi^{-1}\psi(g)) \right\}.$$
We observe that this is well-defined, i.e.\ it does not depend on the choices of $\varphi$ and $\psi$. Indeed, if $\psi_1,\psi_2$ are such that $Y_{\psi_1}=Y_{\psi_2}$, then it follows from Claim \ref{claim:equality-Y} that $\varphi^{-1}(\psi_1(g))$ and $\varphi^{-1}(\psi_2(g))$ both belong to $\varphi^{-1}\psi_1(E(g))$, and therefore they have the same axis in $T$ for the $\rho_0$-action as $\varphi^{-1} \psi_1 (E(g))$ is virtually cyclic. This shows that our definition does not depend on the choice of $\psi$. Furthermore, if $\varphi_1,\varphi_2$ are such that $Y_{\varphi_1}=Y_{\varphi_2}$, then 
$$\left\{ [\rho_0 \circ \varphi_1^{-1},x] \mid x \in \mathrm{proj}_{\Axis(g)} \Axis(\varphi_1^{-1}\psi(g)) \right\}$$ is equal to $$\left\{[\rho_0\circ\varphi_2^{-1},\varphi_2^{-1}\circ\varphi_1\cdot x] \mid x\in\mathrm{proj}_{\Axis(g)}\Axis(\varphi_1^{-1}\psi(g))\right\},$$ where we think of $\varphi_2^{-1} \circ \varphi_1$ as an isometry of $T$ preserving the axis of $g$ (as allowed by Assumption 3 (Nielsen realisation)). Consequently, our subset is also equal to $$\left\{[\rho_0\circ\varphi_2^{-1},y] \mid y\in\mathrm{proj}_{\Axis(g)}\Axis(\varphi_2^{-1}\psi(g))\right\},$$
showing that our definition does not depend on the choice of $\varphi$.

\medskip\noindent Notice that
$$\begin{array}{lcl} \xi \cdot \pi_{Y_{\varphi}}(Y_\psi) & = & \left\{ [\rho_0 \circ \varphi^{-1} \circ \xi^{-1},x] \mid x \in \mathrm{proj}_{\Axis(g)} \Axis(\varphi^{-1}\psi(g)) \right\} \\ \\ & = &  \left\{ [\rho_0 \circ \varphi^{-1} \circ \xi^{-1},x] \mid x \in \mathrm{proj}_{\Axis(g)} \Axis( (\xi \varphi)^{-1} (\xi \psi)(g)) \right\} \\ \\ & = & \pi_{Y_{\xi \varphi}}( Y_{\xi \psi}). \end{array}$$
 Thus, the third assumption from Theorem~\ref{thm:BBF} (Equivariance) holds.

\medskip \noindent
For all $X,Y,Z \in \mathbb{Y}$, define
$$d_Y(X,Z):= \mathrm{diam}_Y \left( \pi_Y(X) \cup \pi_Y(Z) \right).$$
Notice that for all $\varphi,\psi,\xi\in\mathscr{A}$, one has
$$\begin{array}{lcl} d_{Y_\varphi}(Y_\psi, Y_\xi) & = & \mathrm{diam}_{\lambda_\varphi} \left( \pi_{Y_\varphi}(Y_\psi) \cup \pi_{Y_{\varphi}}(Y_\xi) \right) \\ \\ & = & \mathrm{diam}_{\lambda_\varphi} \left\{ [\rho_0 \circ \varphi^{-1},x] \mid x \in \mathrm{proj}_{\Axis(g)} \left( \Axis(\varphi^{-1}\psi(g)) \cup  \Axis(\varphi^{-1}\xi(g)) \right) \right\} \\ \\ & = & \mathrm{diam}_T \left(  \mathrm{proj}_{\Axis(g)} \Axis(\varphi^{-1}\psi(g)) \cup \mathrm{proj}_{\Axis(g)} \Axis(\varphi^{-1}\xi(g)) \right). \end{array}$$

\noindent  We now verify that the projection axioms from Theorem~\ref{thm:BBF} hold. 
Let $L\ge 1$ and $N\ge 0$ be such that for every $\varphi\in\mathscr{A}$, the element $\varphi(g)$ is $(L,N)$-WPD in $T$. Let $K:=(N+2)L$ (as in Lemma~\ref{lemma:overlap}). For every $C \geq 0$ and every $g' \in \mathscr{A} \cdot g$, let $n(C,g')$ denote the constant coming from Assumption~4 (Persistence of long intersections) as in Definition \ref{de:Persistence}. In the sequel, when we write $\|g\|_T$, we refer to the translation length of $g$ for the $\rho_0$-action on $T$. 

\medskip\noindent 
\textbf{Condition (P0).} This follows from the following more general claim when applied to $\varphi=\mathrm{Id}$.

\begin{claim}\label{claim:condition-P0}
For every $\varphi\in\mathscr{A}$, there exists $D(\varphi) \geq 0$ such that, for every $\psi \in \mathscr{A}$ satisfying $Y_\psi \neq Y_\varphi$, one has $\mathrm{diam}_T \left( \mathrm{proj}_{\Axis(\varphi(g))}(\Axis(\psi(g))) \right) \leq D(\varphi)$.
\end{claim}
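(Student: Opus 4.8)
Fix $\varphi\in\mathscr{A}$ and abbreviate $g':=\varphi(g)$. Since the closest-point projection of one geodesic line onto another in a tree is a subsegment of the target whose diameter equals the length of the intersection of the two lines (with the convention that an empty intersection has length $0$), it suffices to bound, uniformly in $\psi\in\mathscr{A}$ with $Y_\psi\neq Y_\varphi$, the length of the overlap $\Axis(\varphi(g))\cap\Axis(\psi(g))$. The natural first move is to invoke the \emph{stable WPD} assumption: $g'$ is $(L,N)$-WPD in $T$, so by Lemma~\ref{lemma:overlap} any $T$-loxodromic $h$ whose axis overlaps $\Axis(g')$ in a segment of length at least $(N+2)L\max\{\|g'\|_T,\|h\|_T\}$ must lie in $E(g')$; and Claim~\ref{claim:equality-Y} together with $Y_\psi\neq Y_\varphi$ tells us $\psi(g)\notin E(\varphi(g))$. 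The obstruction is that $\|\psi(g)\|_T$ is not bounded as $\psi$ varies, so a long overlap need not contain enough fundamental domains of $\Axis(\psi(g))$ to make Lemma~\ref{lemma:overlap} bite.

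To get around this I would route the argument through the \emph{persistence of long intersections} property applied to $g'=\varphi(g)\in\mathscr{A}\cdot g$ rather than to $g$ itself: this is legitimate because the constant $n(C,g')$ from Definition~\ref{de:Persistence} is allowed to depend on $g'$, hence on $\varphi$, and the constant $D(\varphi)$ we are producing may also depend on $\varphi$. So, arguing by contradiction, suppose $\Axis(\varphi(g))\cap\Axis(\psi(g))$ contains a segment of length at least $n(C,g')\|g'\|_T$ for the value $C:=(N+2)L\|g'\|_T+1$. Applying the persistence property of $g'$ with the automorphism $\varphi\psi^{-1}\in\mathscr{A}$ and the singleton $\mathcal{X}:=\{\psi(g)\}$ (whose unique element is $T$-loxodromic by Assumption~1) yields that $\Axis(u)\cap\Axis(g')$ contains a segment of length at least $C\|u\|_T$, where $u:=\varphi\psi^{-1}\varphi(g)$. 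Here $u$ is again $(L,N)$-WPD in $T$ by Assumption~1 (as $\varphi\psi^{-1}\varphi\in\mathscr{A}$), and $u\notin E(g')$: indeed $E(g')=\varphi(E(g))$, so $u\in E(g')$ would give $\psi^{-1}\varphi(g)\in E(g)$, forcing $Y_\psi=Y_\varphi$ by Claim~\ref{claim:equality-Y}; consequently $g'\notin E(u)$ as well, since if $g'\in E(u)$ then $E(u)=E(g')$ by the maximality in Remark~\ref{rk:dgo} and hence $u\in E(g')$, a contradiction.

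Now Lemma~\ref{lemma:overlap}, applied with the $(L,N)$-WPD element $u$ and the element $g'$ and using $g'\notin E(u)$, forces that same overlap $\Axis(u)\cap\Char_T(g')=\Axis(u)\cap\Axis(g')$ to have length strictly less than $(N+2)L\max\{\|u\|_T,\|g'\|_T\}$. Since $T$ is simplicial with unit edge lengths, loxodromic translation lengths are at least $1$, so $\|u\|_T,\|g'\|_T\geq 1$ and $\max\{\|u\|_T,\|g'\|_T\}\leq\|u\|_T\|g'\|_T$; combining the two estimates gives $C\|u\|_T<(N+2)L\|u\|_T\|g'\|_T$, i.e.\ $C<(N+2)L\|g'\|_T$, which contradicts the choice of $C$. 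Hence $\Axis(\varphi(g))\cap\Axis(\psi(g))$ has length less than $n(C,g')\|g'\|_T$, and one sets $D(\varphi):=n\big((N+2)L\|\varphi(g)\|_T+1,\ \varphi(g)\big)\cdot\|\varphi(g)\|_T$, a quantity depending only on $\varphi$ and on the fixed constants $L,N$. The one genuinely delicate step, beyond carefully unwinding the persistence property, is precisely this last trick of replacing $\max\{\|u\|_T,\|g'\|_T\}$ by the product $\|u\|_T\|g'\|_T$ (exploiting positivity of translation lengths in $T$) so as to cancel the uncontrolled factor $\|u\|_T$ and extract a genuine bound on $C$; the rest is bookkeeping with the definitions of $\mathscr{A}$-orbits, $E(\cdot)$, and Claim~\ref{claim:equality-Y}.
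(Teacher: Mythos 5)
Your proof is correct and follows essentially the same route as the paper's: a contrapositive argument applying the persistence of long intersections property of $\varphi(g)$ (with constant $C$ chosen proportional to a translation length so that, since translation lengths in $T$ are at least $1$, the product bound dominates the $\max$ in Lemma~\ref{lemma:overlap}), then Lemma~\ref{lemma:overlap} and Claim~\ref{claim:equality-Y} to force $Y_\psi=Y_\varphi$. The only (cosmetic) difference is that the paper pulls back by $\psi^{-1}$ and invokes the WPD property of $g$ directly, whereas you pull back by $\varphi\psi^{-1}$ and use the WPD property of $\varphi\psi^{-1}\varphi(g)$ together with the symmetry $g'\in E(u)\Leftrightarrow u\in E(g')$.
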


\begin{proof}[Proof of Claim~\ref{claim:condition-P0}]
Let $D(\varphi):=n(K\|g\|_T,\varphi(g))\|\varphi(g)\|_T$. Assume that the intersection between the axes of $\psi(g)$ and $\varphi(g)$ in $T$ (for the $\rho_0$-action) has length greater than $D(\varphi)$. It follows from Assumption~4 (Persistence of long intersections) that the intersection between the axes of $g$ and $\psi^{-1}\varphi(g)$ has length at least $K \|g \|_T \| \psi^{-1}\varphi(g)\|_T$ (whence at least $K\max\{\|g\|_T,\|\psi^{-1}\varphi(g)\|_T\}$ since all translation lengths of loxodromic elements are at least $1$). Lemma~\ref{lemma:overlap} therefore implies that $\psi^{-1}\varphi(g)\in E(g)$. Claim~\ref{claim:equality-Y} concludes that $Y_{\psi}=Y_{\varphi}$.
\end{proof}

\noindent
From now on we let $D:=D(\mathrm{Id})$: this bounds the diameter of the projection of the axis of $\varphi(g)$ onto the axis of $g$ for every $\varphi\in\mathscr{A}$ such that these axes are distinct. 
Let $$\theta := \left( 2D+ \max(1, n(2D+1,g), n(K,g)) \right) \|g\|_T.$$ We have just proved that Condition~(P0) from Theorem~\ref{thm:BBF} holds for this choice of $\theta$, and we will now check Conditions~(P1) and~(P2).

\medskip \noindent
\textbf{Condition (P1).} Let $\varphi,\psi,\xi\in\mathscr{A}$ be such that $Y_\varphi, Y_\psi,Y_\xi$ are pairwise distinct. Assume that $d_{Y_{\varphi}}(Y_\psi,Y_\xi)>\theta$, in particular $d_{Y_{\varphi}}(Y_\psi,Y_\xi) > \left(n(2D+1,g)+2D \right) \|g\|_T$.

\medskip \noindent
Then the union of the projections of $\Axis(\varphi^{-1} \psi(g))$ and $\Axis(\varphi^{-1} \xi(g))$ onto $\Axis(g)$ has diameter greater than $\left(n(2D+1,g)+2D \right) \|g\|_T$. As these projections have diameter at most $D$, the axes of $\varphi^{-1}\psi(g)$ and $\varphi^{-1}\xi(g)$ are disjoint, and their bridge contains a subsegment of $\Axis(g)$ of length greater than $\left( n(2D+1,g)+2D \right) \|g\|_T-2D \geq n(2D+1,g) \|g\|_T$. Lemma~\ref{lemma:culler-morgan} therefore implies that $\varphi^{-1}(\psi(g)\xi(g))$ is loxodromic (for the $\rho_0$-action) and its axis intersects $\Axis(g)$ along a segment of length at least $n(2D+1,g) \|g\|_T$.

\medskip \noindent
It follows from Assumption~4 (Persistence of long intersections) applied to $g'=g$ (by choosing $h=\varphi^{-1}(\psi(g)\xi(g))$ and the automorphism $\psi^{-1}\varphi$ in Definition~\ref{de:Persistence}) 
that $g\psi^{-1}\xi(g)$ is loxodromic and that the intersection of the axes of $g\psi^{-1}\xi(g)$ and $\psi^{-1}\varphi(g)$ (for the $\rho_0$-action) has diameter at least $2D+1$ (in particular these axes intersect each other). Applying Lemma~\ref{lemma:action-on-tree} with $h=\psi^{-1}\xi(g)$ and $h'=\psi^{-1}\varphi(g)$ shows that the distance between the projections of $\Axis(\psi^{-1}\xi(g))$ and of $\Axis(\psi^{-1}\varphi(g))$ onto $\Axis(g)$ is at most $\|g\|_T$. As these projections have diameter at most $D$, we conclude that $d_{Y_\psi}(Y_\varphi,Y_{\xi}) \leq 2D+\|g\|_T\leq\theta$.
\qed

\medskip \noindent
\textbf{Condition (P2).} Let $\varphi,\psi\in\mathscr{A}$ be such that $Y_{\varphi}\neq Y_{\psi}$. Assume for contradiction that $$\{ Y_\xi \neq Y_{\varphi}, Y_{\psi} \mid d_{Y_\xi}(Y_{\varphi}, Y_\psi)> \theta\}$$ is infinite. In particular, $$\{ Y_\xi \neq Y_{\varphi}, Y_{\psi} \mid d_{Y_\xi}(Y_{\varphi}, Y_\psi)> (n(K,g)+2D) \| g \|_T\}$$ is infinite.

\medskip \noindent
Let $(\xi_i)_{i\in\mathbb{N}}\in\mathscr{A}^{\mathbb{N}}$ be an infinite sequence such that the spaces $Y_{\xi_i}$ are pairwise distinct and all belong to the above set. Fix some $i \geq 1$. Then the union of the projections onto $\Axis(g)$ of $\Axis(\xi_i^{-1} \varphi(g))$ and $\Axis(\xi_i^{-1}\psi(g))$ has diameter at least $(n(K,g)+2D) \|g\|_T$. As these projections have diameter at most $D$, it follows using Lemma~\ref{lemma:culler-morgan} that $$\Axis(g)\cap\bigcap_{n,m\in\mathbb{Z}\setminus\{0\}}\Axis(\xi_i^{-1}(\varphi(g)^n \psi(g)^m))$$ has diameter at least $$(n(K,g)+2D) \|g\|_T-2D \geq n(K,g) \|g\|_T.$$  
It follows from Assumption~4 (Persistence of long intersections) that for every $n,m\in\mathbb{Z}\setminus\{0\}$, the element $\varphi(g)^n\psi(g)^m$ is loxodromic (for the $\rho_0$-action) and that $$\Axis(\xi_i(g))\cap\bigcap_{n,m\in\mathbb{Z}\setminus\{0\}}\Axis(\varphi(g)^n \psi(g)^m)$$ has diameter at least $K\|\xi_i(g) \|_T$. 
Using Lemma~\ref{lemma:tree-extra}, it follows that the axes of $\varphi(g)$ and $\psi(g)$ are disjoint, and that the axis of $\xi_i(g)$ intersects the bridge between $\Axis(\varphi(g))$ and $\Axis(\psi(g))$ along a subsegment $\sigma_i$ of length at least $K\|\xi_i(g)\|_T$. 

\medskip \noindent
Up to extracting a subsequence,
we may suppose without loss of generality that $\sigma_i$ does not depend on $i$. Let $\sigma$ denote this subsegment. Also, because the translation lengths of the elements $\xi_i(g)$ are bounded by the distance between $\Axis(\varphi(g))$ and $\Axis(\psi(g))$, up to extracting a subsequence, we may suppose without loss of generality that the elements $\xi_i(g)$ all have the same translation length. Now, for every $i \geq 2$, the axes of $\xi_1(g)$ and $\xi_i(g)$ have an overlap of length at least $K\|\xi_1(g)\|_T$. As $\xi_1(g)$ is $(L,N)$-WPD (Assumption~1), it follows from Lemma~\ref{lemma:overlap} that $\xi_i(g)\in \xi_1(E(g))$, i.e.\ $\xi_1^{-1}\xi_i(g)\in E(g)$. By Claim~\ref{claim:equality-Y}, it follows that $Y_{\xi_1}=Y_{\xi_i}$, a contradiction.
\qed 

\medskip \noindent
Finally, the stabiliser of $Y_\mathrm{Id}$ coincides with $\mathrm{Stab}_\mathscr{A}(E(g))$ according to Claim~\ref{claim:equality-Y}, so it is virtually cyclic in view of Assumption~2 (Elementary fixator) together with Lemma~\ref{lemma:elementary-fixator}, and it acts with unbounded orbits on $Y_\mathrm{Id}$. So the unboundedness and local acylindricity conditions of Theorem~\ref{thm:BBF} are clear. And the global acylindricity will follow from our next claim:

\begin{claim}\label{claim:acylindricity}
Let $\varphi,\psi\in\mathscr{A}$. If $Y_\psi \neq Y_\varphi$, then $\mathrm{Stab}_{\mathscr{A}}(Y_\psi) \cap \mathrm{Stab}_{\mathscr{A}}(Y_\varphi)$ has cardinality at most $[\Stab_{\mathscr{A}}(E(g)): \langle \mathrm{ad}_g \rangle]$.
\end{claim}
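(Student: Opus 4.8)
The plan is as follows. Suppose $Y_\psi\neq Y_\varphi$ and let $\xi\in\Stab_{\mathscr{A}}(Y_\psi)\cap\Stab_{\mathscr{A}}(Y_\varphi)$. By definition of the $\mathscr{A}$-action on $\mathbb{Y}$, we have $\xi\cdot Y_\psi=Y_{\xi\psi}$ and $\xi\cdot Y_\varphi=Y_{\xi\varphi}$, so $Y_{\xi\psi}=Y_\psi$ and $Y_{\xi\varphi}=Y_\varphi$. Applying the equivalences of Claim~\ref{claim:equality-Y} (in the form $1\Leftrightarrow 5$) to the pairs $(\xi\psi,\psi)$ and $(\xi\varphi,\varphi)$, we get $\psi^{-1}\xi\psi\in\Stab_{\mathscr{A}}(E(g))$ and $\varphi^{-1}\xi\varphi\in\Stab_{\mathscr{A}}(E(g))$. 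Equivalently, writing $H:=\Stab_{\mathscr{A}}(E(g))$, we obtain $\xi\in\psi H\psi^{-1}\cap\varphi H\varphi^{-1}$.

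The next step is to analyse this intersection of conjugates. Since $g$ is WPD in $T$ (Assumption~1), Proposition~\ref{prop:dgo} tells us that $E(g)$ is almost malnormal in $G$; I would like to promote this to an almost malnormality statement for $H=\Stab_{\mathscr{A}}(E(g))$ inside $\mathscr{A}$. To do this, observe that $\varphi^{-1}\psi\notin H$ (otherwise Claim~\ref{claim:equality-Y} would give $Y_\varphi=Y_\psi$), so $\xi':=\psi^{-1}\xi\psi$ lies in $H\cap(\psi^{-1}\varphi)H(\psi^{-1}\varphi)^{-1}$ with $\psi^{-1}\varphi\notin H$. Now $\xi'\in H$ means $\xi'(E(g))=E(g)$; and $\xi'\in(\psi^{-1}\varphi)H(\psi^{-1}\varphi)^{-1}$ means $(\psi^{-1}\varphi)^{-1}\xi'(\psi^{-1}\varphi)$ preserves $E(g)$, i.e. $\xi'$ preserves $(\psi^{-1}\varphi)(E(g))$. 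So $\xi'$ preserves both $E(g)$ and the distinct virtually cyclic subgroup $(\psi^{-1}\varphi)(E(g))$. The key point is that $\xi'$ then preserves $E(g)\cap(\psi^{-1}\varphi)E(g)(\psi^{-1}\varphi)^{-1}$, which is finite by almost malnormality of $E(g)$; equivalently, $\xi'$ has a power fixing $g$, and one argues $\xi'$ itself lies in a controlled finite-index-in-$H$ subgroup.

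Here is the cleaner route I would actually take, avoiding case analysis. The group $\langle\ad_g\rangle$ has finite index $m:=[H:\langle\ad_g\rangle]$ in $H$ (Assumption~2 plus Lemma~\ref{lemma:elementary-fixator}), and it is normalised by every element of $H$ (since $\ad_g$ generates, up to finite index, and $H$ preserves $E(g)\supseteq\langle g\rangle$, so $H$ preserves $\langle g\rangle$ up to finite index — more carefully, $\Fix_{\mathscr{A}}(g)=\langle\ad_g\rangle$ is not normal, but $\Stab_{\mathscr{A}}(\langle g\rangle)$ contains it with index $\le 2$ and is normalised appropriately). The element $\xi'=\psi^{-1}\xi\psi$ conjugates $\ad_g$ to $\ad_{\xi'(g)}$ with $\xi'(g)\in E(g)$; and the element $\xi'':=\varphi^{-1}\xi\varphi$ similarly has $\xi''(g)\in E(g)$. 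Since $\xi'=\psi^{-1}\varphi\,\xi''\,\varphi^{-1}\psi$, the element $\xi$ conjugates $g$ into $\psi E(g)\psi^{-1}\cap\varphi E(g)\varphi^{-1}$; but $\psi E(g)\psi^{-1}$ and $\varphi E(g)\varphi^{-1}$ are distinct (as $Y_\psi\neq Y_\varphi$) virtually cyclic subgroups, so by almost malnormality of $E(g)$ their intersection is finite, forcing $\xi(g')=g'$ for $g':=\psi(g)$ (the element $\psi(g)$ has a finite-index infinite cyclic subgroup contained in a finite group only if it is trivial — contradiction — unless the relevant power of $\xi$ fixes it; one then runs the standard argument). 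Thus the map $\xi\mapsto\psi^{-1}\xi\psi$ embeds $\Stab_{\mathscr{A}}(Y_\psi)\cap\Stab_{\mathscr{A}}(Y_\varphi)$ into a subgroup of $H$ all of whose elements fix $g$ — hence into $\Fix_{\mathscr{A}}(g)=\langle\ad_g\rangle$'s ambient group of the right size — and since $\langle\ad_g\rangle$ has index $[H:\langle\ad_g\rangle]$ in $H$, the intersection has cardinality at most $[\Stab_{\mathscr{A}}(E(g)):\langle\ad_g\rangle]$.

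I expect the main obstacle to be the passage from ``$\xi$ conjugates $g$ into the finite group $E(g)\cap(\psi^{-1}\varphi)E(g)(\psi^{-1}\varphi)^{-1}$'' to ``$\xi$ (or the relevant power, or the relevant coset representative) fixes $g$'': one must be careful that $g$ has infinite order, so $\ad_g$ is genuinely conjugated to an inner automorphism by an element of an infinite cyclic group, and deduce that $\psi^{-1}\xi\psi$ actually lies in the normaliser-type subgroup $\Stab_{\mathscr{A}}(\langle g\rangle)$ and hence, up to the index-$\le 2$ and finite-index factors already accounted for in Lemma~\ref{lemma:elementary-fixator}, in $\langle\ad_g\rangle$. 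Once the bookkeeping of these finite indices is matched with the quantity $[\Stab_{\mathscr{A}}(E(g)):\langle\ad_g\rangle]$, the claim follows, and together with the unboundedness and local acylindricity already observed it supplies the global acylindricity hypothesis (Assumption~5) of Theorem~\ref{thm:BBF}, completing the proof of Proposition~\ref{prop:SimpleOne}.
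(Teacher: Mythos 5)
Your opening reduction is fine and matches the paper's starting point: by Claim~\ref{claim:equality-Y}, an element $\xi\in\Stab_{\mathscr{A}}(Y_\psi)\cap\Stab_{\mathscr{A}}(Y_\varphi)$ satisfies $\psi^{-1}\xi\psi,\ \varphi^{-1}\xi\varphi\in\Stab_{\mathscr{A}}(E(g))$, i.e.\ $\xi$ stabilises both $\psi(E(g))=E(\psi(g))$ and $\varphi(E(g))=E(\varphi(g))$. But the pivotal step of your sketch does not follow. From $\xi'(g)\in E(g)$ and $\xi''(g)\in E(g)$ you only get $\xi(\psi(g))\in\psi(E(g))$ and $\xi(\varphi(g))\in\varphi(E(g))$ --- two statements about \emph{different} elements --- and nothing places any single element in the intersection $\psi(E(g))\cap\varphi(E(g))$; so the deduction ``$\xi$ conjugates $g$ into $\psi E(g)\psi^{-1}\cap\varphi E(g)\varphi^{-1}$'' and everything after it is unsupported. (Two side issues: since $\psi$ is an automorphism, the relevant subgroup is the image $\psi(E(g))$, not a conjugate, and almost malnormality in Proposition~\ref{prop:dgo} concerns conjugation by elements of $G$; finiteness of $E(\psi(g))\cap E(\varphi(g))$ should instead come from the maximality statement in Remark~\ref{rk:dgo} --- that part is repairable. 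Also $\Fix_{\mathscr{A}}(g)=\langle\ad_g\rangle$ is not given; Assumption~2 only provides finite index.)

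More importantly, even the endpoint you are aiming for --- that every $\xi$ in the intersection fixes $\psi(g)$ --- would not yield the claim: $\ad_{\psi(g)}$ and all its powers fix $\psi(g)$, and $\Fix_{\mathscr{A}}(\psi(g))$ is an infinite group containing $\langle\ad_{\psi(g)}\rangle$ with finite index, so ``embedding into the fixator'' gives no cardinality bound, and your final count is not a valid inference. What is actually needed is that the intersection meets $\langle\ad_{\psi(g)}\rangle$ \emph{trivially}; since the intersection is a subgroup of $\Stab_{\mathscr{A}}(\psi(E(g)))$, a pigeonhole on cosets of $\langle\ad_{\psi(g)}\rangle$ then gives the bound $[\Stab_{\mathscr{A}}(E(g)):\langle\ad_g\rangle]$. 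This is exactly the paper's (referee's) argument: assume $\psi=\mathrm{Id}$; if the intersection had more elements than the index, two of them would lie in the same coset, so some nontrivial power $\ad_g^k$ would stabilise $\varphi(E(g))=E(\varphi(g))$; almost malnormality of $E(\varphi(g))$ forces $g^k\in E(\varphi(g))$, Remark~\ref{rk:dgo} then gives $\varphi(g)\in E(g)$, and Claim~\ref{claim:equality-Y} yields $Y_\varphi=Y_{\mathrm{Id}}$, contradicting the hypothesis. Your sketch never establishes this trivial-intersection statement, which is the real content of the claim, so as written the proof has a genuine gap.
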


\begin{proof}[Proof of Claim~\ref{claim:acylindricity}]
The following proof was provided to us by the anonymous referee, simplifying our original argument. We assume without loss of generality that $\psi=\mathrm{Id}$. If $\mathrm{Stab}_{\mathscr{A}}(Y_\mathrm{Id}) \cap \mathrm{Stab}_{\mathscr{A}}(Y_\varphi)$ has cardinality bigger than $[\Stab_{\mathscr{A}}(E(g)): \langle \mathrm{ad}_g \rangle]$, then $\langle \ad_g\rangle$ has nontrivial intersection with $\Stab_{\mathscr{A}}(Y_\varphi)=\Stab_{\mathscr{A}}(\varphi(E(g)))$ (this last equality comes from Claim~\ref{claim:equality-Y}). Hence, there exists $k>0$ such that $g^k E(\varphi(g))g^{-k}=E(\varphi(g))$. It follows that $g^k\in E(\varphi(g))$, and Remark~\ref{rk:dgo} then implies that $\varphi^{-1}(g)\in E(g)$. Using Claim~\ref{claim:equality-Y} again shows that $Y_{\varphi}=Y_{\mathrm{Id}}$.
\end{proof}

\noindent
By Assumption~2 (Elementary fixator), the group $\langle\ad_g\rangle$ has finite index in $\{\varphi\in\mathscr{A}|\varphi(g)=g\}$, and by Lemma~\ref{lemma:elementary-fixator} this in turn has finite index in $\Stab_{\mathscr{A}}(E(g))$. In other words $[\Stab_{\mathscr{A}}(E(g)):\langle \mathrm{ad}_g\rangle]$ is finite. It therefore follows from Claim~\ref{claim:acylindricity} that Condition~5 from Theorem~\ref{thm:BBF} (Global acylindricity) holds (with $N=2$ and $B=[\Stab_{\mathscr{A}}(E(g)):\langle \mathrm{ad}_g\rangle]$). Conditions~1 (Projection axioms), 2 (Unboundedness), 3 (Isometric action) and 4 (Local acylindricity) have been checked above. Thus, we have proved that Theorem~\ref{thm:BBF} applies, and as $\mathscr{A}$ is not virtually cyclic by assumption, we conclude that $\mathscr{A}$ is acylindrically hyperbolic. Moreover, because $\mathrm{ad}_g$ is a $Y_{\mathrm{Id}}$-loxodromic element of the stabiliser of $Y_\mathrm{Id}$, it also follows from Theorem \ref{thm:BBF} that $\mathrm{ad}_g$ is a generalised loxodromic element of $\mathscr{A}$.
\end{proof}

\subsection{The case of an $\mathrm{Aut}(G)$-invariant tree}

\noindent
The simplest case where Proposition \ref{prop:SimpleOne} applies is when the $G$-tree $T$ is \emph{$\mathscr{A}$-invariant}, i.e.\ the action $\mathrm{Inn}(G) \simeq G/Z(G) \curvearrowright T$ extends to an isometric action $\mathscr{A} \curvearrowright T$. This may happen when $G$ admits a canonical JSJ decomposition. For instance, this strategy has been successfully applied in \cite{MR4011668} to one-ended hyperbolic groups, and it will be used in the next section in the broader context of one-ended relatively hyperbolic groups. A direct proof of the following criterion is essentially contained in the argument given in \cite{MR4011668}, but it can also be deduced from Proposition \ref{prop:SimpleOne}. 

\begin{prop}\label{prop:JSJcanonical}
Let $G$ be a group, let $\mathscr{A} \subseteq \mathrm{Aut}(G)$ be a subgroup which is not virtually cyclic, and let $T$ be an $\mathscr{A}$-invariant minimal simplicial $G$-tree. Assume that the $G$-action on $T$ is nonelementary, and that there exists an element $g\in G$ which is WPD in $T$ such that $\langle \mathrm{ad}_g \rangle$ is contained in $\mathscr{A}$ and has finite index in $\{ \varphi \in \mathrm{Aut}(G) \mid \varphi(g)=g\}$. Then $\mathscr{A}$ is acylindrically hyperbolic.
\end{prop}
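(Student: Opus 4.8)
The plan is to deduce Proposition~\ref{prop:JSJcanonical} from Proposition~\ref{prop:SimpleOne} by verifying the four numbered hypotheses of the latter. The key observation is that when $T$ is $\mathscr{A}$-invariant, the $G$-action on $T$ extends to an action of $\mathscr{A}$ (indeed of $\langle\mathscr{A},\mathrm{Inn}(G)\rangle$, since $\mathrm{Inn}(G)$ acts through $G/Z(G)$), and this extension trivialises almost all the work. First I would fix the notation: let $\rho_0:G\to\Isom(T)$ be the given action, and let $\bar\rho:\mathscr{A}\to\Isom(T)$ be the extension hypothesised in the statement, so that for $\varphi\in\mathscr{A}$, $h\in G$ and $y\in T$ one has $\bar\rho(\varphi)(hy)=\varphi(h)\bar\rho(\varphi)(y)$. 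A first point to address is that $\bar\rho$ is genuinely a homomorphism extending $\rho_0\circ(\text{projection to }G/Z(G))$; this is part of the definition of $\mathscr{A}$-invariance, but one should note (using Lemma~\ref{lem:center}, since the $G$-action on $T$ is minimal and nonelementary) that such an extension is unique, so there is no ambiguity.

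The heart of the argument is then the verification of the four assumptions of Proposition~\ref{prop:SimpleOne}. For Assumption~1 (\textbf{Stable WPD}): since $g$ is WPD in $T$, pick $L,N$ with $g$ being $(L,N)$-WPD; then for $\varphi\in\mathscr{A}$ the element $\varphi(g)$ is conjugate, inside $\Isom(T)$, to $g$ via the isometry $\bar\rho(\varphi)$ — explicitly $\bar\rho(\varphi)\circ\rho_0(g)\circ\bar\rho(\varphi)^{-1}=\rho_0(\varphi(g))$ — so $\Axis_T(\varphi(g))=\bar\rho(\varphi)(\Axis_T(g))$ and $\bar\rho(\varphi)$ conjugates arc stabilisers of $\Axis_T(\varphi(g))$ to arc stabilisers of $\Axis_T(g)$; hence $\varphi(g)$ is also $(L,N)$-WPD, with the same constants. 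Assumption~2 (\textbf{Elementary fixator}) is exactly the hypothesis that $\langle\mathrm{ad}_g\rangle$ has finite index in $\{\varphi\in\mathrm{Aut}(G)\mid\varphi(g)=g\}$, intersected with $\mathscr{A}$ (and $\langle\mathrm{ad}_g\rangle\subseteq\mathscr{A}$ is assumed). Assumption~3 (\textbf{Nielsen realisation}) is immediate: the action $\bar\rho$ restricted to $\langle\Stab_{\mathscr{A}}(E(g)),\mathrm{Inn}(G)\rangle\subseteq\mathscr{A}$ is the required action, and by construction it restricts on $\mathrm{Inn}(G)$ to the action induced by $G/Z(G)\curvearrowright T$.

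The step I expect to be the main point — though still not hard given the machinery — is Assumption~4 (\textbf{Persistence of long intersections}). Here the $\mathscr{A}$-invariance makes it trivial rather than a limiting argument: for any $g'\in\mathscr{A}\cdot g$, any $\varphi\in\Aut(G)$ with $\varphi\in\mathscr{A}$ (note the persistence property need only be checked for $\varphi$ ranging over $\mathscr{A}$ in the application, but in fact one can argue for all $\varphi\in\Aut(G)$ that lie in $\mathscr{A}$), and any subset $\mathcal{X}\subseteq G$ of $T$-loxodromic elements, the isometry $\bar\rho(\varphi)$ carries $\Axis_T(g')$ to $\Axis_T(\varphi(g'))$ and each $\Axis_T(h)$ to $\Axis_T(\varphi(h))$, hence carries $\Axis_T(g')\cap\bigcap_{h\in\mathcal X}\Axis_T(h)$ isometrically onto $\Axis_T(\varphi(g'))\cap\bigcap_{h\in\mathcal X}\Axis_T(\varphi(h))$, and $\|\varphi(g')\|_T=\|g'\|_T$ since $\bar\rho(\varphi)$ conjugates the two. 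So one may take $n(C)=C$ (or even $n(C)=\lceil C\rceil$), and the property holds. Strictly, the persistence property in Definition~\ref{de:Persistence} quantifies over $\varphi\in\Aut(G)$, not $\varphi\in\mathscr{A}$; but inspecting the proof of Proposition~\ref{prop:SimpleOne}, the property is only ever invoked for automorphisms lying in $\mathscr{A}$, so this suffices — alternatively one states a mild variant of Proposition~\ref{prop:SimpleOne} with $\varphi$ ranging over $\mathscr{A}$, which the same proof gives verbatim. Having checked Assumptions~1--4, and since $\mathscr{A}$ is not virtually cyclic, Proposition~\ref{prop:SimpleOne} yields that $\mathscr{A}$ is acylindrically hyperbolic (and $\mathrm{ad}_g$ is a generalised loxodromic element), which completes the proof. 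The only genuine subtlety is being careful about the domain of quantification of $\varphi$ in the persistence property; everything else is a direct transport-of-structure via the extended action $\bar\rho$.
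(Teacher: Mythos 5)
Your proposal is correct and follows essentially the same route as the paper, which likewise deduces the statement from Proposition~\ref{prop:SimpleOne} by observing that Conditions~1, 3 and~4 hold because $T$ is $\mathscr{A}$-invariant (the paper's proof is a two-line version of yours). Your extra care about the quantifier in Definition~\ref{de:Persistence} -- that persistence is only ever invoked in the proof of Proposition~\ref{prop:SimpleOne} for automorphisms lying in $\mathscr{A}$ -- is a legitimate refinement the paper glosses over; just note that $\langle\Stab_{\mathscr{A}}(E(g)),\mathrm{Inn}(G)\rangle$ need not lie in $\mathscr{A}$, so for the Nielsen realisation condition you should invoke your earlier observation that the action extends to $\langle\mathscr{A},\mathrm{Inn}(G)\rangle$ (with inner automorphisms acting as in $G/Z(G)$, by Lemma~\ref{lem:center}).
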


\begin{proof}
Conditions 1 (Stable WPD), 3 (Nielsen realisation) and 4 (Persistence of long intersections) in Proposition \ref{prop:SimpleOne} are clearly sastisfied as $T$ is $\mathscr{A}$-invariant. Consequently, Proposition \ref{prop:SimpleOne} applies and the desired conclusion follows.
\end{proof}

\subsection{A word on the Nielsen realisation assumption}

\noindent 
Interestingly, for the price of a slightly weaker conclusion, the Nielsen realisation assumption can often be removed from the conditions of our criterion if one has more information about the outer automorphism group of $G$, namely if we know that $\mathrm{Out}(G)$ is virtually torsion-free or residually finite. Having a residually finite outer automorphism group is widespread, as a consequence of the strategy introduced by Grossman in \cite{MR405423}. For instance, Minasyan and Osin proved that the outer automorphism group of every finitely generated residually finite infinitely-ended group is residually finite \cite{MO}. See also \cite{MR2231162, MR2593663, MR3605030} for other instances of such statements. 

\medskip \noindent
The price to pay in the conclusion is that without the Nielsen realisation assumption, we can only prove that $\mathrm{Aut}(G)$ contains a finite-index subgroup which is acylindrically hyperbolic. The stability of acylindrical hyperbolicity under finite-index overgroups is still open to our knowledge (see the discussion in \cite{MR3968890}), so \emph{a priori} we cannot conclude that the entire automorphism group is acylindrically hyperbolic. Nevertheless, many of the interesting properties which can be deduced from being acylindrically hyperbolic, such as the existence of uncountably many normal subgroups \cite[Theorem 2.33]{DGO}, pass from a finite-index subgroup to the overgroup, so proving that a group is virtually acylindrically hyperbolic remains of interest.

\begin{prop}\label{prop:SimpleTwo}
Let $G$ be a group such that $\mathrm{Out}(G)$ is virtually torsion-free or residually finite, and $\mathscr{A} \subseteq \mathrm{Aut}(G)$ a subgroup which is not virtually cyclic. Assume that there exist an element $g\in G$ with $\ad_g\in\mathscr{A}$, and a nonelementary simplicial minimal $G$-tree $T$ with the following properties:
\begin{enumerate}
	\item \textbf{Stable WPD:} There exist $L,N \geq 0$ such that, for every automorphism $\varphi \in \mathscr{A}$, the element $\varphi(g)$ is $(L,N)$-WPD in $T$.
	\item \textbf{Elementary fixator:} $\langle \mathrm{ad}_g \rangle$ has finite index in $\{ \varphi \in \mathscr{A} \mid \varphi(g)=g\}$.
	\item \textbf{Persistence of long intersections:}
	All elements in $\mathscr{A}\cdot g$ have the persistence of long intersections property in $T$. 
\end{enumerate}
Then $\mathscr{A}$ is virtually acylindrically hyperbolic.
\end{prop}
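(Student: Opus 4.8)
The plan is to deduce Proposition~\ref{prop:SimpleTwo} from Proposition~\ref{prop:SimpleOne} by passing to a suitable finite-index subgroup $\mathscr{A}_0\subseteq\mathscr{A}$ on which the \emph{Nielsen realisation} hypothesis becomes automatic. The starting point is the reasoning recorded in the remark following Proposition~\ref{prop:SimpleOne}: combining \emph{Stable WPD} (which in particular says that $g$ itself is WPD in the simplicial, hence hyperbolic, tree $T$) and \emph{Elementary fixator} with Lemma~\ref{lemma:elementary-fixator} (applied in $\mathrm{Aut}(G)$ and then intersected with $\mathscr{A}$), one gets that $\langle\mathrm{ad}_g\rangle$ has finite index in $\Stab_{\mathscr{A}}(E(g))$. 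Since $\mathrm{ad}_g$ is trivial in $\mathrm{Out}(G)$, the image $H$ of $\Stab_{\mathscr{A}}(E(g))$ under the natural map $q\colon\mathscr{A}\to\mathrm{Out}(G)$ is a \emph{finite} subgroup of $\mathrm{Out}(G)$.

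First I would choose a finite-index subgroup $Q\le\mathrm{Out}(G)$ with $Q\cap H=\{1\}$. If $\mathrm{Out}(G)$ is virtually torsion-free, take $Q$ torsion-free of finite index, so that $Q\cap H$, being a finite subgroup of a torsion-free group, is trivial. If $\mathrm{Out}(G)$ is residually finite, for each of the finitely many nontrivial $h\in H$ pick a finite-index subgroup of $\mathrm{Out}(G)$ not containing $h$, and let $Q$ be their intersection. Now set $\mathscr{A}_0:=q^{-1}(Q)$. This is a finite-index subgroup of $\mathscr{A}$; it is not virtually cyclic since $\mathscr{A}$ is not; and it contains $\mathrm{ad}_g$, whose image in $\mathrm{Out}(G)$ is trivial and hence lies in $Q$.

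Next I would verify the four hypotheses of Proposition~\ref{prop:SimpleOne} for the pair $(\mathscr{A}_0,g)$ and the same tree $T$. \emph{Stable WPD} and \emph{Persistence of long intersections} are inherited verbatim, since $\mathscr{A}_0\subseteq\mathscr{A}$ and $\mathscr{A}_0\cdot g\subseteq\mathscr{A}\cdot g$. \emph{Elementary fixator} holds because $\{\varphi\in\mathscr{A}_0\mid\varphi(g)=g\}$ is a subgroup of $\{\varphi\in\mathscr{A}\mid\varphi(g)=g\}$ containing $\langle\mathrm{ad}_g\rangle$, which already has finite index in the latter. For \emph{Nielsen realisation}, note that $\Stab_{\mathscr{A}_0}(E(g))\subseteq\Stab_{\mathscr{A}}(E(g))$ forces its $q$-image into $H$, while $\Stab_{\mathscr{A}_0}(E(g))\subseteq\mathscr{A}_0$ forces it into $Q$; hence this image lies in $H\cap Q=\{1\}$, so $\Stab_{\mathscr{A}_0}(E(g))\subseteq\mathrm{Inn}(G)$. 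Consequently $\langle\Stab_{\mathscr{A}_0}(E(g)),\mathrm{Inn}(G)\rangle=\mathrm{Inn}(G)$, which acts on $T$ via the isomorphism $\mathrm{Inn}(G)\simeq G/Z(G)$ of Lemma~\ref{lem:center}; this is exactly the action required by \emph{Nielsen realisation}.

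Applying Proposition~\ref{prop:SimpleOne} to $\mathscr{A}_0$ then shows that $\mathscr{A}_0$ is acylindrically hyperbolic, and therefore $\mathscr{A}$ is virtually acylindrically hyperbolic. The argument is essentially bookkeeping; the only step requiring genuine input is the finiteness of $H$ (coming from \emph{Stable WPD}, \emph{Elementary fixator} and Lemma~\ref{lemma:elementary-fixator}) together with the separation $Q\cap H=\{1\}$, and it is precisely at the latter step that the hypothesis on $\mathrm{Out}(G)$ is used.
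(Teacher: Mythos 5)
Your argument is correct and follows essentially the same route as the paper: deduce from \emph{Stable WPD}, \emph{Elementary fixator} and Lemma~\ref{lemma:elementary-fixator} that $\Stab_{\mathscr{A}}(E(g))$ has finite image in $\Out(G)$, then use virtual torsion-freeness or residual finiteness of $\Out(G)$ to pass to a finite-index subgroup of $\mathscr{A}$ on which the stabiliser of $E(g)$ lies in $\mathrm{Inn}(G)$, making the Nielsen realisation hypothesis of Proposition~\ref{prop:SimpleOne} automatic. Your construction of $Q$ and $\mathscr{A}_0=q^{-1}(Q)$ simply makes explicit the finite-index subgroup the paper invokes, and the remaining verifications match the paper's.
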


\begin{proof}
By Lemma~\ref{lemma:elementary-fixator}, the group $\Stab_{\mathscr{A}}(E(g))$ contains $\Fix_{\mathscr{A}}(g)$ as a finite-index subgroup. It thus follows from Assumption~2 (Elementary fixator) that $\Stab_{\mathscr{A}}(E(g))$ has finite image in $\Out(G)$. As $\mathrm{Out}(G)$ is virtually torsion-free or residually finite, there exists a finite-index subgroup $\mathscr{A}_- \subseteq \mathscr{A}$ such that $\Stab_{\mathscr{A}_-}(E(g))$ is contained in $\mathrm{Inn}(G)$ (and with $\ad_g\in\mathscr{A}_-$). Now, we want to apply Proposition \ref{prop:SimpleOne} to $\mathscr{A}_-$ to deduce that $\mathscr{A}_-$ is acylindrically hyperbolic. Notice that the Nielsen realisation condition from Proposition~\ref{prop:SimpleOne} is now obviously satisfied, and the other conditions are hypotheses in our proposition. Consequently, Proposition~\ref{prop:SimpleOne} applies and the desired conclusion follows.
\end{proof}

\section{Relatively hyperbolic groups: the one-ended case}\label{sec:rh-one-ended}

In this section, we establish Theorem~\ref{thm:IntroRHacyl} from the introduction in the case where $G$ is one-ended relative to the collection $\mathcal{P}$ of peripheral subgroups. We refer the reader to \cite{Bow} for the definition of hyperbolicity of a group relative to a collection of subgroups; we say that a group is \emph{relatively hyperbolic} if there exists a finite collection $\mathcal{P}$ of proper subgroups such that $G$ is hyperbolic relative to $\mathcal{P}$.

\subsection{Nowhere elliptic elements}

Let $G$ be a group and $\mathcal{P}$ a collection of subgroups. A subgroup $H\subseteq G$ is \emph{$\calp$-elementary} if it is either virtually cyclic (possibly finite) or conjugate into a subgroup in $\calp$. A subgroup $H\subseteq G$ is an \emph{arc stabiliser} in a real $G$-tree $T$ if $H$ fixes a nondegenerate segment of $T$ pointwise.

\begin{de}\label{de:nowhere-elliptic}
Let $G$ be a group. An element $g\in G$ is \emph{nowhere elliptic in $\calz$-trees} if $g$ is not elliptic in any real $G$-tree with (finite or) virtually cyclic arc stabilisers. Given a collection $\calp$ of subgroups of $G$, we say that $g$ is \emph{nowhere elliptic in $\calz\calp$-trees} if $g$ is not elliptic in any real $G$-tree with $\calp$-elementary arc stabilisers.
\end{de}

\noindent
The main goal of this section is to construct elements that are nowhere elliptic in $\calz\calp$-trees, for specific families $\calp$ of subgroups, including peripheral subgroups in relatively hyperbolic groups. To be precise, the families of subgroups we are interested in are as follows -- we mention that in the sequel of the paper, we will actually only work with Property~(RAH2) (which also appears in \cite[Definition~4.21]{AutRAAG} under a different terminology) and never use Proposition~\ref{prop:RAH}, but we include it with a proof sketch mainly to justify our terminology.

\begin{prop}\label{prop:RAH}
Let $G$ be a group and $\mathcal{P}$ a finite collection of subgroups. The following assertions are equivalent:
\begin{description}
	\item[(RAH1)] $G$ admits an unbounded acylindrical action on a hyperbolic space such that each subgroup in $\mathcal{P}$ has bounded orbits;
	\item[(RAH1')] $G$ admits an unbounded acylindrical action on a quasi-tree such that each subgroup in $\mathcal{P}$ has bounded orbits;
	\item[(RAH2)] $G$ admits an action on a hyperbolic space with at least one WPD element such that each subgroup in $\mathcal{P}$ has bounded orbits;
	\item[(RAH2')] $G$ admits an action on a quasi-tree with at least one WPD element such that each subgroup in $\mathcal{P}$ has bounded orbits;
	\item[(RAH3)] $G$ admits an action on a geodesic metric space with at least one WPD contracting element such that each subgroup in $\mathcal{P}$ has bounded orbits.
\end{description}
When these equivalent conditions are satisfied, we say that $G$ is \emph{acylindrically hyperbolic relative to $\mathcal{P}$}.
\end{prop}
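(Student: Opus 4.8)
The plan is to prove the cycle of implications (RAH1') $\Rightarrow$ (RAH1) $\Rightarrow$ (RAH2) $\Rightarrow$ (RAH3) $\Rightarrow$ (RAH1'), together with (RAH1') $\Rightarrow$ (RAH2') $\Rightarrow$ (RAH2); since (RAH2) already lies on the cycle, these implications give the equivalence of all five conditions. The implications (RAH1') $\Rightarrow$ (RAH1) and (RAH2') $\Rightarrow$ (RAH2) are immediate because a quasi-tree is hyperbolic. For (RAH1) $\Rightarrow$ (RAH2), and likewise for (RAH1') $\Rightarrow$ (RAH2') carried out inside the quasi-tree, one invokes Osin's classification of acylindrical actions on hyperbolic spaces \cite{OsinAcyl}: an unbounded acylindrical action contains a loxodromic element, and a loxodromic element of an acylindrical action is automatically WPD; the subgroups in $\mathcal{P}$ keep bounded orbits since the space is unchanged. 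For (RAH2) $\Rightarrow$ (RAH3), one notes that a loxodromic element of an action on a hyperbolic space (which may be assumed geodesic) admits a quasi-axis that is a quasi-geodesic, hence a strongly contracting subset, so the element is a WPD contracting element; again $\mathcal{P}$ is untouched.

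The substantial implication is (RAH3) $\Rightarrow$ (RAH1'). Starting from an action $G \curvearrowright S$ on a geodesic space with a WPD contracting element $g$ such that each $P \in \mathcal{P}$ has bounded orbit, the first step is to record, using the Dahmani--Guirardel--Osin analysis (Proposition~\ref{prop:dgo}, whose proof extends to WPD contracting elements, e.g. via work of Sisto on hyperbolically embedded subgroups), that $E(g)$ is virtually cyclic, self-commensurating and almost malnormal, and that the $E(g)$-orbit in $S$ is a contracting quasi-line. The second step is to run the Bestvina--Bromberg--Fujiwara projection complex construction \cite{BBF} on the collection $\mathbb{Y}$ of left cosets of $E(g)$ in $G$, each metrized by its orbit in $S$ (replaced by a genuine geodesic quasi-line at bounded Hausdorff distance), with projections given by closest-point projection in $S$. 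Contraction of $g$ yields the projection axioms (P0) and (P1) with uniform constants, and the WPD hypothesis yields (P2) as well as the local and global acylindricity requirements of Theorem~\ref{thm:BBF} (the global one via almost malnormality of $E(g)$). Since every space in $\mathbb{Y}$ is a quasi-line, the space $\mathcal{C}_K(\mathbb{Y})$ appearing in the proof of Theorem~\ref{thm:BBF} is a quasi-tree \cite{BBF}, and $G$ acts on it acylindrically and with unbounded orbits (the coset $E(g)$, on which $g$ is loxodromic, already provides an unbounded orbit).

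The remaining point, and the main obstacle, is to check that each $P \in \mathcal{P}$ still has bounded orbit in $\mathcal{C}_K(\mathbb{Y})$. The naive approach fails, as there is in general no coarsely Lipschitz $G$-map $\mathcal{C}_K(\mathbb{Y}) \to S$: the length-$K$ edges of the projection complex may correspond to long paths in $S$. Instead, the plan is to show that every $\mathcal{C}_K(\mathbb{Y})$-loxodromic element $p \in G$ already has unbounded orbit in $S$. If $p$ stabilises a coset and acts loxodromically on it, then $p$ lies in a conjugate of $E(g)$, so $p$ is $S$-contracting and in particular $S$-loxodromic. Otherwise the axis of $p$ in $\mathcal{C}_K(\mathbb{Y})$ meets infinitely many cosets, with large projections between consecutive ones, and the estimates underlying the projection complex construction \cite{BBF, BBFS} show that the corresponding concatenation of long sub-segments of these cosets and of the bridges between them is an $S$-quasi-geodesic along which $p$ translates; so $p$ is $S$-unbounded. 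Consequently $P$, having bounded orbit in $S$, contains no $\mathcal{C}_K(\mathbb{Y})$-loxodromic element; since the $G$-action on the hyperbolic space $\mathcal{C}_K(\mathbb{Y})$ is acylindrical, Osin's classification \cite{OsinAcyl} applied to the $P$-action forces $P$ to have bounded orbit. This establishes (RAH1') and closes the cycle.

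Apart from that last step, the only other technical point is the verification of the projection axioms and of the acylindricity conditions for the coset collection $\mathbb{Y}$; these are routine once one has the uniform contraction of the $E(g)$-cosets and the WPD property of $g$, and follow the template of \cite{BBF, BBFS}. I therefore expect the control of the peripheral subgroups inside the new quasi-tree — i.e. identifying $\mathcal{C}_K(\mathbb{Y})$-loxodromic elements with Morse elements of the original action — to be the genuinely delicate part of the argument.
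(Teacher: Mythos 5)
Your proposal is correct, and for most of the argument it follows the same route as the paper: the easy implications are handled identically (Osin's trichotomy plus ``loxodromic in an acylindrical action is WPD'', and ``loxodromic on a hyperbolic space is contracting''), and for (RAH3)$\Rightarrow$(RAH1') both you and the paper run the Bestvina--Bromberg--Fujiwara/BBFS construction on the $G$-translates of a contracting (quasi-)axis of $g$ -- equivalently, cosets of $E(g)$ -- with nearest-point projections, getting an acylindrical unbounded action on a quasi-tree $\mathcal{C}_K(\mathbb{Y})$. (One small misattribution: the projection axioms, including (P2), come from the contraction property alone, via \cite[Theorem~5.2]{BBFS}; WPD enters only in the local and global acylindricity conditions.) Where you genuinely diverge is the final step, controlling the peripheral subgroups in $\mathcal{C}_K(\mathbb{Y})$. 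The paper's argument is more direct: it fixes a basepoint $x$ on the axis, lets $D$ be the maximal diameter of the orbits $H\cdot x$ for $H\in\mathcal{P}$, and chooses $K$ larger than $D$ plus the contraction and projection constants; then peripheral elements create no $K$-large projections, and the distance estimate of \cite[Theorem~6.3]{BBFS} bounds $d_{\mathcal{C}}(x,hx)$ by roughly $2D+3$ outright. Your argument is indirect: you show that every $\mathcal{C}_K(\mathbb{Y})$-loxodromic element already has unbounded orbit in the original space $S$ (via the coarse lower bound on $d_S$ coming from sums of large projections along contracting axes), and then invoke Osin's trichotomy for the restricted acylindrical $P$-action to conclude boundedness. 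This works, and it buys something the paper's proof does not: your quasi-tree and constant $K$ are independent of $\mathcal{P}$, so bounded $S$-orbits for any subgroup automatically give bounded orbits on the fixed quasi-tree; the price is the extra (standard but not free) lemma identifying $\mathcal{C}_K(\mathbb{Y})$-loxodromics with $S$-unbounded elements, whereas the paper gets away with a one-line distance estimate by letting $K$ depend on the peripheral orbit diameters.
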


\noindent
Recall that an isometry $g \in \mathrm{Isom}(X)$ of a geodesic metric space $X$ is \emph{contracting} if there exists a point $x \in X$ such that $n \mapsto g^nx$ defines a quasi-isometric embedding $\mathbb{Z} \to X$ and such that the nearest-point projection onto the orbit $\langle g \rangle \cdot x$ of every metric ball disjoint from $\langle g \rangle \cdot x$ is has diameter bounded by a uniform constant (referred to as the \emph{contraction constant}). For $\mathcal{P}= \emptyset$, the equivalences $(RAH1) \Leftrightarrow (RAH2)$, $(RAH2) \Leftrightarrow (RAH3)$ and $(RAH1) \Leftrightarrow (RAH1')$ are proved respectively in \cite[Theorem~1.2]{OsinAcyl}, \cite[Theorem~H]{BBF} and \cite{MR3685605}. Proposition~\ref{prop:RAH} can be recovered from these arguments. 

\begin{proof}[Sketch of proof of Proposition \ref{prop:RAH}.]
The implications $(RAH1) \Rightarrow (RAH2)$ and $(RAH1') \Rightarrow (RAH2')$ follow from the observation that every loxodromic element is WPD when the action acylindrical and that loxodromic elements do exist as a consequence of \cite[Theorem~1.1]{OsinAcyl}; the implications $(RAH1') \Rightarrow (RAH1)$ and $(RAH2') \Rightarrow (RAH2)$ are clear; and the implication $(RAH2) \Rightarrow (RAH3)$ follows from the observation that loxodromic isometries in hyperbolic spaces are always contracting (see for instance \cite[Lemma~3.1]{MR3849623}). 

\medskip \noindent
Now, assume that $(RAH3)$ holds, i.e. $G$ acts on a geodesic metric space $X$ with a WPD contracting element $g \in G$ such that each subgroup in $\mathcal{P}$ is elliptic. We want to prove that $(RAH1')$ holds. Following \cite[Section 5.2]{BBFS}, we assume without loss of generality that $g$ acts as a translation on a bi-infinite geodesic line $\gamma$ (this is always possible up to replacing $X$ by a quasi-isometric space, as explained in the proof of \cite[Proposition~6(2)]{BF-WPD}). We will also assume without loss of generality that $\gamma$ is $EC(g)$-invariant where 
$$EC(g):= \{h \in G \mid \text{$\gamma$ and $h\gamma$ at finite Hausdorff distance} \}$$ (it follows from the fact that $g$ is a WPD element that $EC(g)=E(g)$). Let $\mathbb{Y}$ denote the collection of the $G$-translates of $\gamma$, and, for all $A,B \in \mathbb{Y}$, let $\pi_A(B)$ denote the nearest-point projection of $B$ onto $A$. From Theorem \ref{thm:BBF}, Assertion 1 (Projection axioms) is satisfied according to \cite[Theorem 5.2]{BBFS}; Assertions 2 (Unboundedness) and 3 (Equivariance) are satisfied by construction; and Assertion 4 (Local acylindricity) follows from the fact that $g$ is WPD. In order to verify Assertion~5 (Global acylindricity), obseve that, if $\gamma_1$ and $\gamma_2$ are two distinct $G$-translates of $\gamma$, then $\mathrm{Stab}(\gamma_1) \cap \mathrm{Stab}(\gamma_2)$ is finite. Indeed, the nearest-point projection of $\gamma_2$ onto $\gamma_1$ is bounded (because $g$ is contracting), so $\mathrm{Stab}(\gamma_1) \cap \mathrm{Stab}(\gamma_2)$ stabilises $\gamma_1$ and has bounded orbits. The desired conclusion follows from the fact that $\mathrm{Stab}(\gamma_1)$ acts properly on $\gamma_1$ since $\langle g \rangle$ has finite index $EC(g)$ (because $g$ is contracting and WPD). Thus, the assumptions of Theorem \ref{thm:BBF} are satisfied. By reproducing its proof, we define a new projection map $\pi'$ and new maps $\delta_Y$, and we construct a hyperbolic space $\mathcal{C}_K(\mathbb{Y})$ on which $G$ acts acylindrically with $g$ loxodromic. Actually, $\mathcal{C}_K(\mathbb{Y})$ is a quasi-tree according to \cite[Theorem~6.6]{BBFS}.

\medskip \noindent
In order to conclude the proof of our proposition, we claim that every subgroup in $\mathcal{P}$ has bounded orbits in $\mathcal{C}_K(\mathbb{Y})$. Fix a point $x \in \gamma$ and set 
$$D:= \max \{ \text{diameter of $H \cdot x$ in $X$} \mid H \in \mathcal{P}\}.$$ 
Also, let $C \geq 0$ denote the contraction constant of $\gamma$. If we choose $K>D+2(C+\theta)$, it follows from \cite[Theorem 6.3]{BBFS} combined with the projection axioms satisfied by the maps $\delta_Y$ that, given an arbitrary element $h$ in a subgroup in $\mathcal{P}$, we have
$$d_\mathcal{C}(x,hx) \leq 2 d_\gamma(x,hx) +3 \leq 2D+3$$
because $\delta_Y(x,hx) \leq d_Y(x,hx)+2\theta = d_X(x,hx)+ 2(C+\theta) \leq D+2(C+\theta)$ for every $Y \in \mathbb{Y}$. Therefore, every subgroup in $\mathcal{P}$ has a bounded orbit, concluding the proof of our proposition.
\end{proof}

\noindent
Our main construction of nowhere elliptic elements is the following.

\begin{lemma}\label{lemma:existence-nowhere-elliptic}
Let $G$ be a group which is not virtually cyclic, and $\calp$ a collection of subgroups. If $G$ is acylindrically hyperbolic relative to $\mathcal{P}$, then there exists an element $g\in G$ which is nowhere elliptic in $\calz\calp$-trees.
\end{lemma}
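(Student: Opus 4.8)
The plan is to produce the element $g$ as a \emph{generalised loxodromic element} for an action witnessing that $G$ is acylindrically hyperbolic relative to $\calp$, and then to show that any such element is automatically nowhere elliptic in $\calz\calp$-trees. First I would use Property~(RAH2) (or equivalently (RAH1)): $G$ admits a nonelementary acylindrical action on a hyperbolic space $Y$ in which every subgroup in $\calp$ has bounded orbits. Since $G$ is not virtually cyclic, \cite[Theorem~1.1]{OsinAcyl} (or \cite{BF-WPD}) yields a loxodromic WPD element $g\in G$ for this action; moreover one can arrange, using \cite{DGO}, that $E(g)$ is virtually cyclic and equals the maximal elementary subgroup containing $g$. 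This $g$ is my candidate.

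The core of the argument is the following claim: if $g$ were elliptic in some real $G$-tree $S$ with $\calp$-elementary arc stabilisers, we would reach a contradiction. The mechanism I expect to use is a \emph{comparison of actions} via translation-length functions together with an acylindrical-accessibility / Rips-machine type input. Concretely: the action of $G$ on $Y$ is acylindrical and nonelementary, so $G$ contains WPD (hence contracting) elements, and by the Bestvina--Bromberg--Fujiwara quasi-tree construction (Theorem~\ref{thm:BBF}, as already used in the proof sketch of Proposition~\ref{prop:RAH}) $G$ even acts acylindrically on a quasi-tree with $g$ loxodromic and each $P\in\calp$ bounded. On the other hand, an action on a real tree $S$ with $\calp$-elementary (in particular ``small'' in the appropriate sense) arc stabilisers in which $g$ is elliptic would, via a limiting/Rips-theory analysis, force $g$ to lie in a subgroup that is either $\calp$-conjugate or elementary or supports a further splitting — in all cases contradicting that $g$ is loxodromic with virtually cyclic $E(g)$ in the acylindrical action on $Y$ while all of $\calp$ is elliptic there. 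The cleanest route is probably: assume $g$ fixes a point of $S$; look at the minimal subtree and run the structure theory of stable (or just small) actions on $\mathbb{R}$-trees to split $G$ (or a point stabiliser) over a $\calp$-elementary subgroup; then observe that $g$, being loxodromic-WPD on $Y$ and hence not contained in any $\calp$-elementary subgroup, must act loxodromically on the Bass--Serre tree of this splitting — contradicting that $g$ fixed a point of $S$ (after checking the splitting is compatible with, or refines, the data coming from $S$).

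The step I expect to be the main obstacle is precisely this compatibility: extracting from ``$g$ elliptic in \emph{some} $\calp$-elementary real $G$-tree'' an honest \emph{splitting} of $G$ over a $\calp$-elementary subgroup that $g$ respects, and ensuring the arc stabilisers really are controlled (one may need to pass to a minimal subtree, discard exotic components, and invoke that $G$ is finitely generated or at least that the relevant point/arc stabilisers are — this is where the hypotheses of Proposition~\ref{prop:RAH} and finiteness of $\calp$ get used). Everything else — existence of the WPD element, boundedness of $\calp$, virtual cyclicity of $E(g)$ — is standard and quotable from \cite{OsinAcyl,BF-WPD,DGO} and the earlier parts of the paper. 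I would organise the write-up as: (1) produce $g$ via (RAH2) and \cite{OsinAcyl}; (2) record $E(g)$ virtually cyclic and $g\notin$ any $\calp$-elementary subgroup; (3) suppose for contradiction $g$ is elliptic in a $\calp$-elementary real $G$-tree $S$ and derive a $\calp$-elementary splitting with $g$ loxodromic on the Bass--Serre tree; (4) contradiction.
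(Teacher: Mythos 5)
There is a genuine gap, and it is in the very first move: your candidate element does not work, and the claimed implication ``generalised loxodromic for the relative acylindrical action $\Rightarrow$ nowhere elliptic in $\calz\calp$-trees'' is false. Take $G=\mathbb{F}_n$ with $\calp=\emptyset$ acting on its Cayley tree: every nontrivial element is loxodromic and WPD for this acylindrical action, yet a basis element $a$ (or any element of the free factor $\mathbb{F}_{n-1}$) is elliptic in the Bass--Serre tree of $\mathbb{F}_n=\langle a\rangle\ast\mathbb{F}_{n-1}$, which has trivial arc stabilisers and hence is a $\calz$-tree. This is exactly the point emphasised in the introduction of the paper: sufficiently generic elements are needed, and WPD elements of one fixed acylindrical action need not be generic. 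Your proposed contradiction in step (3) fails for the same reason: from ``$g$ elliptic in a $\calp$-elementary splitting'' you cannot conclude that $g$ acts loxodromically on the Bass--Serre tree, because $g$ may simply lie in a vertex group that is not $\calp$-elementary (as in the example above). So being WPD on $Y$ and not conjugate into $\calp$ is perfectly compatible with ellipticity in such a splitting, and no contradiction arises. (There are also secondary issues: the Rips/stability machinery you invoke needs hypotheses --- finite generation, stability of the action --- that are not available for an arbitrary real $G$-tree with $\calp$-elementary arc stabilisers.)

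The paper's actual argument has to work harder and produces a \emph{different} element than the WPD element it starts with. It fixes a finite generating set $X_0$, chooses the WPD element $g$ so that no infinite-order element of $X=(X_0\cup\{1\})^4$ lies in $E(g)$, and replaces $g$ by a power so that (by Dahmani--Guirardel--Osin) the normal closure $\langle\langle g\rangle\rangle$ is free with all nontrivial elements loxodromic on $Y$; in particular it meets all conjugates of subgroups in $\calp$ trivially. It then forms the free subgroup $H=\langle sgs^{-1},\ s\in X\rangle$ of rank at least two and takes $h\in H$ loxodromic in every nontrivial simplicial $H$-splitting with cyclic edge stabilisers (a ``test-element''-type result for free groups). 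The key technical step --- which has no analogue in your sketch --- is showing that $H$ has no global fixed point in an arbitrary real $G$-tree $T$ with $\calp$-elementary arc stabilisers; this uses the choice of $X$, Lemma~\ref{lemma:culler-morgan}, and almost malnormality of $E(g)$ to produce a $T$-loxodromic element of $H$. Since $H$ is free and meets conjugates of $\calp$ trivially, its action on $T$ has cyclic arc stabilisers, and Guirardel's result converting real-tree actions into simplicial splittings then forces $h$ to be loxodromic in $T$. The nowhere elliptic element of the lemma is $h$, not $g$; your plan would need to be restructured along these lines rather than patched.
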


\begin{proof}
Fix a finite generating set $X_0$ of $G$, and set $X= (X_0\cup\{1\})^4$. Let $G$ act on a hyperbolic space $Y$ with at least one WPD element such that all the subgroups in $\mathcal{P}$ are elliptic. As $G$ is not virtually cyclic, the set
$$\left\{ E(g) \mid g \in G \ \text{~is WPD in~} Y \right\}$$
is an infinite collection of maximal virtually cyclic subgroups. Notice that if $g,g'\in G$ are two WPD elements for the $G$-action on $Y$ with $E(g)\neq E(g')$, then $E(g)\cap E(g')$ is finite. Therefore, every element $s\in X$ of infinite order is contained in at most one subgroup of $G$ of the form $E(g)$ with $g$ WPD in $Y$. It follows that we can (and shall) choose a WPD element $g \in G$ such that for every $s\in X$ of infinite order, one has $s \notin E(g)$. From \cite[Theorem 8.7]{DGO}, we know that, up to replacing $g$ with a sufficiently large power, the normal subgroup $\langle \langle g \rangle \rangle$ is free, and every nontrivial element of $\langle\langle g\rangle\rangle$ acts loxodromically on $Y$. Consequently, $\langle \langle g \rangle \rangle$ intersects every subgroup of $G$ which is conjugate to a subgroup in $\mathcal{P}$ trivially. As a subgroup of $\langle \langle g \rangle \rangle$, the group $H:= \langle sgs^{-1}, \ s \in X \rangle$  is free; moreover, its rank is at least two because otherwise, for every $s \in X$, we would have $sE(g)s^{-1} \subseteq E(g)$, which would imply $s \in E(g)$ as $E(g)$ is almost malnormal (Proposition~\ref{prop:dgo}). 

\medskip\noindent According to \cite{MR2585579} (see also \cite{Solie, MR3367520, GuirardelLevittRandomSplit}), there exists $h \in H$ which is loxodromic in every nontrivial simplicial $H$-action on a simplicial tree with cyclic (possibly finite) edge stabilisers. We will prove that $h$ is nowhere elliptic in $\calz\calp$-trees, which will conclude our proof.

\medskip \noindent
So let $T$ be an $\mathbb{R}$-tree equipped with a nontrivial isometric action of $G$ with $\calp$-elementary arc stabilisers, and let us prove that $h$ is loxodromic in $T$. Up to replacing $T$ by its minimal $G$-invariant subtree, we will assume that the $G$-action on $T$ is minimal. We will first prove that $H$ does not fix any point in $T$. If $g$ acts loxodromically on $T$, there is nothing to prove as $g \in H$. We thus suppose that $\mathrm{Fix}(g)$ is non-empty. 

\medskip\noindent We claim that $(X_0\cup\{1\})^2$ contains an element which acts loxodromically on $T$. This is obvious if $X_0$ contains a loxodromic isometry, so assume that all the elements of $X_0$ are elliptic. As $G=\langle X_0\rangle$ does not fix any point in $T$,  there exist two distinct elements $x,x'\in X_0$ such that $\mathrm{Fix}(x) \cap \mathrm{Fix}(x') = \emptyset$. Then the product $xx'$ is loxodromic by Lemma \ref{lemma:culler-morgan}, which proves our claim. 

\medskip\noindent Let now $s\in (X_0\cup\{1\})^2$ be a loxodromic element, as provided by the above paragraph. If $\mathrm{Fix}(g)$ is reduced to a single point, then $\mathrm{Fix}(g)$ and $\mathrm{Fix}(sgs^{-1})=s \mathrm{Fix}(g)$ are disjoint, so that $g \cdot sgs^{-1}$ defines a $T$-loxodromic element in $H$. From now on, we assume that $\mathrm{Fix}(g)$ is not reduced to a single point. If the axis of $s$ has a subsegment of length bigger than $\|s\|_T$ contained in $\mathrm{Fix}(g)$, then the commutator $[g,s]$ fixes a non-degenerate arc $I \subseteq \mathrm{Fix}(g)$. Notice that $\Stab(I)$ is a $\calp$-elementary subgroup that contains $g$; as $\langle \langle g \rangle \rangle$ intersects the groups in $\mathcal{P}$ trivially, it follows that $\Stab(I)$ must be virtually cyclic. As a consequence, $\mathrm{Stab}(I) \subseteq E(g)$. Hence the element $sg^{-1}s^{-1} = g^{-1}[g,s]$ belongs to $g^{-1} E(g)=E(g)$,
so $\langle g \rangle \subseteq E(g) \cap sE(g)s^{-1}$. As $E(g)$ is almost malnormal (Proposition~\ref{prop:dgo}), this implies that $s \in E(g)$. As $s$ has infinite order (being loxodromic in $T$), this contradicts our choice of $g$. Thus, we have proved that the intersection between the axis of $s$ and $\mathrm{Fix}(g)$ is a (possibly empty) segment of length at most $\|s\|_T$. As a consequence, $\mathrm{Fix}(g)$ and $\mathrm{Fix}(s^2gs^{-2})=s^2 \mathrm{Fix}(g)$ are disjoint, so that $g \cdot s^2gs^{-2}$ defines a $T$-loxodromic element which belongs to $H$. 

\medskip \noindent
So $H$ acts nontrivially on $T$. As $H$ is free and intersects all conjugates of subgroups in $\mathcal{P}$ trivially, the $H$-action on $T$ has cyclic arc stabilisers. If $h$ is elliptic, it follows from \cite[Corollary 5.2]{GuirardelRealTree} that one can also find a nontrivial $H$-action on a simplicial tree with cyclic edge stabilisers in which $h$ is elliptic. This contradicts the choice of $h$. Therefore, $h$ must be loxodromic in $T$, as desired. 
\end{proof}

\noindent
In some situations, being nowhere elliptic in $\calz\calp$-trees (with respect to a well-chosen family $\calp$ of subgroups) is sufficient in order to verify Condition~2 (Elementary fixator) from Proposition~\ref{prop:SimpleOne}. For instance, the strategy works for some right-angled Artin groups \cite{AutRAAG}, and also for hyperbolic and relatively hyperbolic groups thanks to the following fact. 

\begin{prop}[{Guirardel--Levitt \cite[Corollary~7.13]{GL-splittings}}]\label{prop:nowhere-elliptic-fixator}
Let $G$ be a group which is hyperbolic relative to a finite set $\calp$ of finitely generated subgroups, and let $g\in G$ be nowhere elliptic in $\calz\calp$-trees. Then $\Fix_{\Aut(G)}(g)$ contains $\langle \ad_g\rangle$ as a finite-index subgroup.
\end{prop}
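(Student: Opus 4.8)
The plan is to split the statement into two complementary facts about the natural homomorphism $\Fix_{\Aut(G)}(g)\to\Out(G)$: its kernel is virtually $\langle\ad_g\rangle$, and its image is finite. Since $\ad_g\in\Fix_{\Aut(G)}(g)$ lies in the kernel, these together imply that $\langle\ad_g\rangle$ has finite index in $\Fix_{\Aut(G)}(g)$, which is the claim.

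For the kernel: it equals $\Fix_{\Aut(G)}(g)\cap\mathrm{Inn}(G)=\{\ad_h\mid h\in C_G(g)\}$, which is isomorphic to $C_G(g)/Z(G)$. Since $g$ is nowhere elliptic in $\calz\calp$-trees it is in particular not conjugate into any $P\in\calp$, so $g$ is a loxodromic element of the relatively hyperbolic group $(G,\calp)$. Standard facts about relatively hyperbolic groups then give that $C_G(g)$ is virtually cyclic with $\langle g\rangle$ of finite index, and that $\ad_g$ has infinite order in $\mathrm{Inn}(G)$ (as $Z(G)\cap\langle g\rangle$ is trivial). Hence the kernel is virtually $\langle\ad_g\rangle$.

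For the image: suppose for contradiction it is infinite and pick $\varphi_i\in\Fix_{\Aut(G)}(g)$, $i\in\mathbb{N}$, with pairwise distinct images in $\Out(G)$. I would run a Paulin-type compactness argument adapted to the relatively hyperbolic setting. The group $G$ is finitely generated (being hyperbolic relative to finitely many finitely generated subgroups); fix a finite generating set $X$, let $G$ act on its Cayley graph by $\rho_0$, and set $\rho_i:=\rho_0\circ\varphi_i^{-1}$. Since the $\varphi_i$ are pairwise distinct, the maximal generator displacements grow (one must take some care to rule out the degenerate case where translation lengths in the coned-off Cayley graph fail to grow, which is exactly where the relative hyperbolic structure and the finite generation of the peripheral subgroups enter). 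Rescaling by the diverging displacements and passing to a subsequence, the $\rho_i$ converge in the equivariant Gromov--Hausdorff topology to a nontrivial minimal isometric $G$-action on an $\mathbb{R}$-tree $T_\infty$; by the relative Rips machine (Bestvina--Feighn, Sela, Dahmani, Groves, Guirardel--Levitt), $T_\infty$ has $\calp$-elementary arc stabilisers, i.e.\ it is a nontrivial $\calz\calp$-tree. On the other hand $\varphi_i(g)=g$ forces $\rho_i(g)=\rho_0(g)$, so $\|g\|_{\rho_i}=\|g\|_{\rho_0}$ is independent of $i$; after rescaling the translation length of $g$ tends to $0$, so $g$ is elliptic in $T_\infty$. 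This contradicts the hypothesis that $g$ is nowhere elliptic in $\calz\calp$-trees.

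\paragraph*{Main obstacle.}
The delicate point is the construction of the limiting tree $T_\infty$ and, above all, the identification of its arc stabilisers as $\calp$-elementary: this is the technical core of Guirardel--Levitt's analysis of automorphisms of relatively hyperbolic groups and relies on the relative version of the Rips machine together with acylindrical-accessibility-type inputs, the finite generation of each $P\in\calp$ being used both to ensure $G$ is finitely generated and to carry out these $\mathbb{R}$-tree limit arguments. Everything else -- the computation of the kernel, the virtual cyclicity of $C_G(g)$, and the final contradiction via translation lengths -- is routine.
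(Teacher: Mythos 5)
The first thing to note is that the paper does not prove this proposition at all: it is imported as a black box from Guirardel--Levitt \cite[Corollary~7.13]{GL-splittings}, so there is no internal argument to compare your sketch against. Your outline does have the expected shape of such a proof: computing the kernel of $\Fix_{\Aut(G)}(g)\to\Out(G)$ as $C_G(g)/Z(G)$, and bounding the image by a rescaled Paulin-type limit in which $\varphi_i(g)=g$ forces the translation length of $g$ to vanish in the limit tree, contradicting nowhere-ellipticity; that final contradiction, and the standard step that pairwise distinct classes in $\Out(G)$ force the displacements to diverge, are fine.

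However, both places where genuine content is required are glossed over. For the kernel, the inference ``$g$ nowhere elliptic in $\calz\calp$-trees $\Rightarrow$ $g$ not conjugate into any $P\in\calp$, hence $C_G(g)$ virtually cyclic'' is not a formal consequence of Definition~\ref{de:nowhere-elliptic}: $\calz\calp$-trees only constrain \emph{arc} stabilisers, so peripheral subgroups need not be elliptic in them, and if $(G,\calp)$ admits no nontrivial $\calz\calp$-tree at all the hypothesis is vacuous and places no restriction on $g$ (not even infinite order). Excluding parabolic and torsion elements, or otherwise proving $C_G(g)$ elementary, therefore needs an actual argument, and this is exactly where the precise hypotheses of the Guirardel--Levitt statement do work. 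For the image, the identification of arc stabilisers of the limit tree as $\calp$-elementary --- which you rightly single out as the main obstacle --- is not supplied by the Rips machine: the Rips machine analyses a stable action \emph{after} one knows arc stabilisers are small. What is needed is the relative version of Paulin's and Bestvina's limiting argument (Belegradek--Szczepa\'nski; Section~7 of Guirardel--Levitt). Moreover, since you allow arbitrary $\varphi_i\in\Aut(G)$ rather than $\Aut(G,\calp)$, the natural output of that argument is that arc stabilisers are elementary with respect to the twisted peripheral structures $\varphi_i(\calp)$, not with respect to $\calp$ itself, so an additional step is required to obtain a genuine $\calz\calp$-tree. Deferring these points to the Guirardel--Levitt machinery makes the proposal essentially circular, since they constitute the content of the very result being proved.
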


\subsection{Acylindrical hyperbolicity of the automorphism group}

\noindent We start by recording the following lemma.

\begin{lemma}\label{lemma:acyl-hyp-subgroup}
Let $G$ be a group such that $\mathrm{Inn}(G)$ is infinite. If $\Aut(G)$ is acylindrically hyperbolic, then every subgroup $\mathscr{A}$ of $\Aut(G)$ that contains $\mathrm{Inn}(G)$ is acylindrically hyperbolic.
\end{lemma}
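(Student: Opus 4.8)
The plan is to reduce the statement to the theory of acylindrically hyperbolic groups: recall that by Osin's characterization (and work of Dahmani--Guirardel--Osin \cite{DGO,OsinAcyl}), a group is acylindrically hyperbolic if and only if it contains a generalised loxodromic element, equivalently a WPD element for some (possibly non-acylindrical) action on a hyperbolic space. The key external input I would invoke is the result (Antol\'in--Minasyan--Sisto, or more directly \cite[Theorem 2.27]{DGO}) that if $G$ is acylindrically hyperbolic and $G$ is a normal subgroup (or more generally an $s$-normal subgroup, meaning $G\cap gGg^{-1}$ is infinite for every $g$) of a group $\mathscr{A}$, then $\mathscr{A}$ is acylindrically hyperbolic. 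Here $\mathrm{Inn}(G)$ is normal in $\Aut(G)$, hence normal in any intermediate subgroup $\mathscr{A}$, so this applies verbatim once we know $\mathrm{Inn}(G)$ is acylindrically hyperbolic.

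So the first step is to observe that $\mathrm{Inn}(G)\cong G/Z(G)$ is itself acylindrically hyperbolic. This is where the hypothesis that $\mathrm{Inn}(G)$ is infinite enters: $\Aut(G)$ acylindrically hyperbolic together with $\mathrm{Inn}(G)$ being an infinite normal subgroup forces $\mathrm{Inn}(G)$ to be acylindrically hyperbolic. Indeed, an infinite normal subgroup of an acylindrically hyperbolic group is either finite or acylindrically hyperbolic (this is \cite[Corollary 7.3]{DGO}, or can be extracted from the classification of normal subgroups there); since $\mathrm{Inn}(G)$ is infinite, it must be acylindrically hyperbolic. One must also note that $\mathrm{Inn}(G)$ is not virtually cyclic here — but this is automatic: a normal subgroup of an acylindrically hyperbolic group that is virtually cyclic would be finite (acylindrically hyperbolic groups have no infinite virtually cyclic normal subgroups, cf.\ \cite[Lemma 7.2]{DGO}), contradicting infiniteness.

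The second step is to apply the normal-overgroup result in the other direction: given any $\mathscr{A}$ with $\mathrm{Inn}(G)\subseteq\mathscr{A}\subseteq\Aut(G)$, the subgroup $\mathrm{Inn}(G)$ is normal in $\mathscr{A}$ (it is characteristic in $G$, hence normal in all of $\Aut(G)$, a fortiori in $\mathscr{A}$), and it is acylindrically hyperbolic by Step~1. By \cite[Corollary 1.5]{MSAntolin} / \cite[Theorem 2.27 or Corollary 7.3]{DGO}, a group containing an infinite acylindrically hyperbolic normal subgroup is acylindrically hyperbolic; thus $\mathscr{A}$ is acylindrically hyperbolic.

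The main obstacle — really the only subtlety — is making sure the normality is genuinely normality and not merely $s$-normality, and that the infiniteness hypothesis is used correctly; this is precisely why the statement assumes $\mathrm{Inn}(G)$ infinite rather than, say, $G$ infinite (if $Z(G)$ has finite index, $\mathrm{Inn}(G)$ could be finite and the argument collapses). Everything else is a direct citation. One could alternatively give a more self-contained argument: $\mathrm{Aut}(G)$ acts on some hyperbolic space with a WPD element $\varphi$; after replacing $\varphi$ by a power lying in a suitable elementary subgroup one finds that $\mathrm{Inn}(G)$ contains a WPD element for the restricted action (using that $\mathrm{Inn}(G)$ is infinite and normal, so it cannot be elliptic and must contain a loxodromic element, which is WPD since WPD is inherited by subgroups containing the element), and then the same element is WPD for the $\mathscr{A}$-action; but invoking the cited structural results is cleaner.
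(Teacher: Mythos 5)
Your Step 2 is where the argument breaks down: the claim that ``a group containing an infinite acylindrically hyperbolic normal subgroup is acylindrically hyperbolic'' is not a theorem, and in fact it is false. For instance, $\mathbb{F}_2$ is an infinite normal (even s-normal) subgroup of $\mathbb{F}_2\times\mathbb{Z}$, it is acylindrically hyperbolic, yet $\mathbb{F}_2\times\mathbb{Z}$ is not (it has infinite center, hence an infinite normal amenable subgroup). The results you cite from \cite{DGO} and \cite{OsinAcyl} go in the \emph{opposite} direction: they say that acylindrical hyperbolicity (indeed, nonelementarity of a fixed acylindrical action) passes \emph{down} from a group to its infinite normal, or more generally s-normal, subgroups, not \emph{up} to overgroups. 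The paper itself points out that even passage to finite-index overgroups is an open problem, so no citation of this kind can carry Step 2. Your Step 1 (that $\mathrm{Inn}(G)$ is acylindrically hyperbolic) is fine, but converting that abstract property of $\mathrm{Inn}(G)$ back into a statement about $\mathscr{A}$ is exactly the gap.

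The repair is to never leave the original space: fix one nonelementary acylindrical action of $\Aut(G)$ on a hyperbolic space $X$. By Osin's result on s-normal subgroups (\cite[Lemma~7.2]{OsinAcyl}), the infinite normal subgroup $\mathrm{Inn}(G)$ already acts nonelementarily on this same $X$. Since $\mathrm{Inn}(G)\subseteq\mathscr{A}$, the restricted $\mathscr{A}$-action on $X$ is nonelementary as well, and it is acylindrical because restrictions of acylindrical actions are acylindrical. This two-line argument is the paper's proof, and it is essentially the ``more self-contained argument'' you sketch in your last sentence (an $\mathrm{Inn}(G)$-element loxodromic for the acylindrical $\Aut(G)$-action is WPD for $\Aut(G)$, hence for $\mathscr{A}$); that sketch should be the main argument rather than an afterthought, and with it the detour through abstract acylindrical hyperbolicity of $\mathrm{Inn}(G)$ and the false overgroup principle disappears.
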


\begin{proof}
Let $\Aut(G)$ act nonelementarily and acylindrically on a hyperbolic space $X$. Since $\mathrm{Inn}(G)$ is an infinite normal subgroup of $\mathrm{Aut}(G)$, it follows from \cite[Lemma~7.2]{OsinAcyl} that the induced action of $\mathrm{Inn}(G)$ on $X$ is nonelementary. In particular the $\mathscr{A}$-action on $X$ is nonelementary, and this action is also acylindrical (being a restriction of an acylindrical action), so the desired conclusion follows. 
\end{proof}

\noindent
Given a group $G$ and a collection $\calp$ of subgroups of $G$, we denote by $\Aut(G,\calp)$ the subgroup of $\mathrm{Aut}(G)$ made of all automorphisms which send every subgroup in $\calp$ to a conjugate of a subgroup in $\mathcal{P}$. Also, recall that $G$ is \emph{one-ended relative to $\calp$} if every simplicial $H$-action on a simplicial tree with finite edge stabilisers in which all subgroups in $\calp$ are elliptic has a global fixed point. For instance, if $G$ is one-ended, then it is automatically one-ended relative to $\calp$. As an application of the theory of JSJ decompositions for relatively hyperbolic groups (we refer the reader to \cite{GL-jsj} for an account of this theory), we deduce the following statement.

\begin{prop}\label{prop:RH}
Let $G$ be a group which is hyperbolic relative to a finite collection $\calp$ of finitely generated subgroups, with no subgroup in $\calp$ equal to $G$. Assume that $G$ is not virtually free and that it is one-ended relative to $\calp$. Then $\Aut(G,\calp)$ is acylindrically hyperbolic.
\end{prop}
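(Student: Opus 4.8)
Since $G$ is one-ended relative to $\calp$, finitely generated, hyperbolic relative to $\calp$, and not virtually free, the JSJ deformation space of $G$ over the family of finite (or more precisely $\calp$-elementary, depending on the exact setup in \cite{GL-jsj}) subgroups relative to $\calp$ is nontrivial, and its canonical tree $T$ — say the tree of cylinders, or the canonical JSJ tree built in \cite{GL-jsj} — is a minimal simplicial $G$-tree which is invariant under the whole group $\Aut(G,\calp)$, since this group preserves the relevant JSJ data. The $G$-action on $T$ is nonelementary: it has no global fixed point (as the deformation space is nontrivial), and it is not a single line (since $G$ is not virtually cyclic nor virtually $\mathbb{Z}$-by-finite in a way making $T$ a line — here one uses that $G$ is not virtually free together with relative one-endedness to rule out degenerate cases), so $G$ contains two independent loxodromic elements.

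Next I would produce the element $g$ required by Proposition~\ref{prop:JSJcanonical}. By Lemma~\ref{lemma:existence-nowhere-elliptic}, since $G$ is hyperbolic relative to $\calp$ and not virtually cyclic, it is acylindrically hyperbolic relative to $\calp$, so there exists an element $g \in G$ which is nowhere elliptic in $\calz\calp$-trees. Such a $g$ is in particular loxodromic in the JSJ tree $T$ (whose arc stabilisers are $\calp$-elementary, indeed finite). Moreover $g$ is WPD in $T$: the action of $G$ on $T$ is acylindrical because $T$ is a JSJ tree over finite subgroups of a relatively hyperbolic group and hence arc stabilisers are uniformly finite along axes (more precisely, one invokes the standard fact that a one-ended relatively hyperbolic group acts acylindrically on its canonical JSJ tree over finite subgroups), so every loxodromic element, including $g$, is WPD. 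Finally, by Proposition~\ref{prop:nowhere-elliptic-fixator} (Guirardel--Levitt), $\Fix_{\Aut(G)}(g)$ contains $\langle \ad_g \rangle$ as a finite-index subgroup; a fortiori the same holds for $\{\varphi \in \Aut(G,\calp) \mid \varphi(g)=g\}$, which sits inside $\Fix_{\Aut(G)}(g)$.

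**It remains to check that $\mathscr{A} := \Aut(G,\calp)$ is not virtually cyclic and that $\langle \ad_g \rangle \subseteq \mathscr{A}$.** The latter is immediate since inner automorphisms always lie in $\Aut(G,\calp)$. For the former: $\mathrm{Inn}(G) \subseteq \mathscr{A}$, and $\mathrm{Inn}(G) \cong G/Z(G)$; since $G$ is one-ended relative to $\calp$, not virtually free, and relatively hyperbolic, it is not virtually cyclic, and in fact $Z(G)$ is finite (a nonelementary relatively hyperbolic group has finite center), so $\mathrm{Inn}(G)$ — hence $\mathscr{A}$ — is not virtually cyclic. Then all four hypotheses of Proposition~\ref{prop:JSJcanonical} are verified: $\mathscr{A}$ is not virtually cyclic, $T$ is an $\mathscr{A}$-invariant minimal simplicial $G$-tree with nonelementary $G$-action, and $g$ is a WPD element with $\langle \ad_g \rangle \subseteq \mathscr{A}$ of finite index in its $\Aut(G)$-fixator. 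Proposition~\ref{prop:JSJcanonical} then yields that $\mathscr{A} = \Aut(G,\calp)$ is acylindrically hyperbolic.

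**The main obstacle** I anticipate is the careful setup of the canonical JSJ tree $T$: one must cite the correct existence theorem from JSJ theory for relatively hyperbolic groups ensuring (i) nontriviality of the splitting from relative one-endedness plus non-virtual-freeness, (ii) invariance under all of $\Aut(G,\calp)$ (this is why one works with the \emph{canonical} tree, e.g. the tree of cylinders, rather than an arbitrary JSJ tree), and (iii) acylindricity of the $G$-action on $T$, which underlies the WPD property of $g$. Each of these is standard but needs the right reference; once $T$ is in hand, everything else follows mechanically from the results already established in the paper. A minor point to handle carefully is the degenerate case analysis ruling out $T$ being a line or a point, which uses the standing hypotheses on $G$.
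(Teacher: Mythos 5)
Your overall strategy in the nonelementary case matches the paper's: take the $\Aut(G,\calp)$-invariant JSJ tree over $\calp$-elementary subgroups from \cite[Corollary~9.20]{GL-jsj}, note the action is acylindrical by \cite[Proposition~7.12]{GL-jsj}, produce $g$ via Lemma~\ref{lemma:existence-nowhere-elliptic} and Proposition~\ref{prop:nowhere-elliptic-fixator}, and feed everything into Proposition~\ref{prop:JSJcanonical}. However, there is a genuine gap: you assert that the canonical JSJ tree $T$ is nontrivial and the $G$-action nonelementary, claiming that relative one-endedness plus non-virtual-freeness rule out the degenerate cases, and you dismiss this as a ``minor point''. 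This is false in general. If $G$ is a rigid one-ended group (e.g.\ the fundamental group of a closed hyperbolic $3$-manifold, with $\calp=\emptyset$), $G$ admits no splitting over elementary subgroups at all, and the canonical JSJ tree is a point. If $G$ is a closed surface group (again $\calp=\emptyset$), then $G$ does split over cyclic subgroups, but there is no $\Aut(G)$-invariant nontrivial splitting (the mapping class group preserves no finite set of isotopy classes of curves), so again the canonical tree is a point: the whole group is a flexible QH vertex. In both situations the tree you propose to feed into Proposition~\ref{prop:JSJcanonical} does not exist with the properties you need, and your argument breaks down. The paper handles exactly these two cases separately: if the JSJ tree is a point with rigid vertex, then $\Out(G,\calp)$ is finite by \cite[Corollary~7.13]{GL-splittings}, so $\Aut(G,\calp)$ is virtually $G/Z(G)$, which is relatively hyperbolic (using finiteness of $Z(G)$ and \cite{MR2507252}) and hence acylindrically hyperbolic; if the vertex is flexible, then by \cite[Corollary~9.20]{GL-jsj} $G$ is virtually a closed surface group, and one invokes \cite[Corollary~5.5]{MR4011668} together with Lemma~\ref{lemma:acyl-hyp-subgroup} to pass from $\Aut(G)$ to $\Aut(G,\calp)$. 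Without this case analysis the proof is incomplete.

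A secondary inaccuracy: the JSJ tree must be taken over $\calp$-elementary subgroups, not over finite subgroups -- relative one-endedness says precisely that $G$ has no nontrivial splittings over finite subgroups in which the subgroups of $\calp$ are elliptic -- so edge stabilisers of $T$ are virtually cyclic or parabolic, not finite as you claim. Consequently the WPD property of $g$ cannot be deduced from ``finite edge stabilisers''; it comes from the acylindricity of the $G$-action on $T$ (\cite[Proposition~7.12]{GL-jsj}) together with the fact that $g$, being nowhere elliptic in $\calz\calp$-trees, is loxodromic in $T$. This part is fixable, but as written the justification is wrong.
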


\begin{proof}
According to \cite[Corollary~9.20]{GL-jsj}, the group $G$ admits an $\mathrm{Aut}(G,\calp)$-invariant JSJ tree $T$ over $\mathcal{P}$-elementary subgroups. Moreover, according to \cite[Proposition~7.12]{GL-jsj}, the $G$-action on $T$ is acylindrical. Because $G$ is not virtually cyclic, either the $G$-action on $T$ has a global fixed point or it is nonelementary.

\medskip \noindent
Assume first that $G$ fixes a vertex $v$ of $T$. If $v$ is rigid, then $G$ does not split over a $\mathcal{P}$-elementary subgroup and it follows from \cite[Corollary 7.13]{GL-splittings} that $\mathrm{Out}(G,\calp)$ is finite. Consequently, $\mathrm{Inn}(G) \simeq G/Z(G)$ has finite index in $\mathrm{Aut}(G,\calp)$. As $Z(G)$ is finite, it follows from \cite{MR2507252} that $\mathrm{Aut}(G,\calp)$ is relatively hyperbolic, hence acylindrically hyperbolic as desired. If $v$ is flexible, then according to \cite[Corollary~9.20]{GL-jsj}, $G$ is $\mathcal{P}$-elementary (which is impossible), or virtually free (which is also impossible), or virtually a closed surface group. In the latter case, the acylindrical hyperbolicity of $\mathrm{Aut}(G)$ follows from \cite[Corollary~5.5]{MR4011668}. As $\mathrm{Inn}(G)$ is infinite and contained in $\Aut(G,\calp)$, it follows from Lemma~\ref{lemma:acyl-hyp-subgroup} that $\Aut(G,\calp)$ is acylindrically hyperbolic.

\medskip \noindent
From now on, assume that $G$ acts nonelementarily on $T$. By Lemma~\ref{lemma:existence-nowhere-elliptic}, there exists an element $g\in G$ which is nowhere elliptic in $\calz\calp$-trees. In particular $g$ acts loxodromically on $T$, and as the $G$-action on $T$ is acylindrical, the element $g$ is WPD in $T$. By Proposition~\ref{prop:nowhere-elliptic-fixator}, the group $\langle \mathrm{ad}_g \rangle$ has finite index in $\{ \varphi \in \mathrm{Aut}(G,\mathcal{P}) \mid \varphi(g)=g\}$.  Acylindrical hyperbolicity of $\Aut(G,\calp)$ therefore follows from Proposition~\ref{prop:JSJcanonical}.
\end{proof}

\section{Infinitely-ended groups}\label{sec:infinitely-ended}

\noindent
In this section, we prove our main theorem that automorphism groups of finitely generated infinitely-ended groups are acylindrically hyperbolic. The strategy is to apply the criterion given by Proposition \ref{prop:SimpleOne}. The key Condition~4 (Persistence of long intersections) is studied in Section~\ref{sub:BarrierSD}. The proof of our theorem is then completed in Section~\ref{sub:ThmSD}. We start by providing some extra background regarding deformation spaces of $G$-trees.

\subsection{Additional background on deformation spaces}

Let $G$ be a group. A \emph{simplicial metric $G$-tree} is a metric space $S$ obtained by equipping a simplicial $G$-tree with a $G$-invariant path metric in which every edge is made isometric to a nondegenerate compact segment. It is naturally equipped with an isometric $G$-action. Given an edge $e\subseteq S$, we denote by $\ell(e)$ the length of $e$, and by $G_e$ its stabilizer for the $G$-action. The following specific types of maps between simplicial metric $G$-trees will be used in the sequel:
\begin{itemize}
	\item A map $f:S\to T$ between simplicial metric $G$-trees is \emph{piecewise linear} if, after possibly refining the simplicial structure by adding extra valence two vertices in every edge of $S$, the map $f$ sends every \emph{edgelet} of $S$ (i.e.\ an edge for the new simplicial structure) linearly to either a vertex or an edge of $T$.  
	\item A piecewise linear $G$-equivariant map $f:S\to T$ is a \emph{collapse map} if, after possibly subdividing edges, it consists in sending one edgelet of $S$, as well as each of its $G$-translates, to either a point, or linearly to an edge of smaller length. 
	\item A piecewise linear $G$-equivariant map $f:S\to T$ is a \emph{fold} if, after possibly subdividing edges, it consists in linearly identifying a couple of edgelets $(e,e')$ based at the same vertex $v$ of $S$, by identifying the point at distance $t$ from $v$ in $e$ to the point at distance $t$ from $v$ in $e'$ for every $t\in [0,\ell(e)]$, and considering the smallest $G$-invariant equivalence relation on $S$ induced in this way. 
\end{itemize}
The following lemma is a variation over a theorem of Stallings \cite{Sta}.

\begin{lemma}\label{lemma:stallings}
Let $G$ be a finitely generated group, and let $K\ge 1$ be an integer. Let $S$ and $T$ by two minimal simplicial $G$-trees whose edge stabilisers have cardinality at most $K$, and let $f:S\to T$ be a $G$-equivariant $1$-Lipschitz piecewise linear map. 
Then $f$ is a composition of finitely many collapse maps and folds.
\end{lemma}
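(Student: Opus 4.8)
The plan is to follow Stallings' classical factorization of morphisms of group trees through folds, adapted to the metric and finite-edge-stabiliser setting. First I would reduce to the simplicial (non-metric) case: by $G$-equivariance and piecewise linearity, after a common subdivision of $S$ we may assume $f$ maps edgelets of $S$ to edges or vertices of $T$. Since $f$ is $1$-Lipschitz and all edges have length bounded below (finitely many orbits of edges, each of positive length), a crude length count shows that the combinatorial complexity ``number of edgelets of $S$ mapping non-degenerately'' — or more precisely the total reduced length $\sum_{e}\ell_T(f(e))$ summed over an orbit transversal — is a well-founded quantity that will serve as the induction parameter. I would then argue exactly as Stallings: if $f$ is not already an isomorphism onto its image, then either it is not injective on the link of some vertex $v$ (so there exist two edgelets $e,e'$ at $v$ with $f(e)=f(e')$ and we fold them), or $f$ kills an edgelet (collapse), or $f$ maps an edgelet onto a strictly longer image — in which case, after subdividing, the problematic behaviour is again a fold or collapse. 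Each such elementary move produces a new simplicial metric $G$-tree $S'$ with a $1$-Lipschitz $G$-equivariant piecewise linear map $S'\to T$ factoring $f$, and strictly decreases the complexity.

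The one genuine subtlety — and the step I expect to be the main obstacle — is verifying that the intermediate trees $S'$ produced after a fold remain \emph{simplicial} $G$-trees with edge stabilisers of cardinality at most $K$, so that the induction stays within the hypothesis class. Folding two edgelets $e,e'$ at $v$ with the same image identifies their endpoints and their stabilisers, so the new edge stabiliser is the subgroup generated by $G_e$ and $G_{e'}$ together with any element swapping $e$ and $e'$ in the $G$-orbit; a priori this could be larger than $K$. The point is that the fold is forced by $f$: the identified edge maps to the single edge $f(e)=f(e')$ of $T$, hence its stabiliser in $S'$ injects into the stabiliser in $T$ of that edge (since $f$ is $G$-equivariant and, after the fold, injective on the relevant edgelet), which has cardinality at most $K$ by hypothesis on $T$. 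I would spell this out carefully: the factoring map $S'\to T$ is still $1$-Lipschitz, $G$-equivariant and piecewise linear, and an edge stabiliser of $S'$ is contained in the stabiliser of its (non-degenerate, after reducing) image edge in $T$. Similarly, collapse maps can only shrink stabilisers or merge edges whose common image bounds the stabiliser, and minimality of $S'$ is restored by passing to the minimal subtree, which does not affect the factorization. Finiteness of the process then follows from well-foundedness of the complexity, and composing the elementary moves (collapses and folds) recovers $f$, completing the proof.
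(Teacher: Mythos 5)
Your overall strategy (equivariant subdivision, collapses to make $f$ isometric on each edge, then Stallings folds, with the bound on edge stabilisers propagated to the intermediate trees via equivariance) is exactly the route the paper takes, and your observation that an edge stabiliser of a folded tree injects into the stabiliser of its image edge in $T$ is the right way to keep the intermediate trees in the hypothesis class. However, your termination argument has a genuine gap: the complexity you propose (number of edgelets in an orbit transversal mapping non-degenerately, or the total reduced length $\sum_e \ell_T(f(e))$ over a transversal) does \emph{not} strictly decrease under every fold. If the two edgelets being folded are $e$ and $ge$ for some $g\in G$, i.e.\ they lie in the \emph{same} $G$-orbit, then the fold neither changes the number of $G$-orbits of edges nor the image lengths over a transversal; so the claim ``each elementary move strictly decreases the complexity'' fails precisely in this case, and as written the induction does not terminate.

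The missing idea is to make the stabiliser bound part of the termination measure rather than only a check that the hypotheses persist. A fold of $e$ and $ge$ in the same orbit replaces the edge stabiliser $G_e$ by $\langle G_e,g\rangle$, which strictly contains $G_e$ (so its cardinality at least doubles), while a fold of two edges in distinct orbits decreases the number of $G$-orbits of edges. Since, by your own equivariance argument, every intermediate edge stabiliser embeds into an edge stabiliser of $T$ and hence has cardinality at most $K$, the second type of fold can occur only boundedly many times per orbit; thus the lexicographic complexity (number of edge orbits, then edge-stabiliser sizes) is what terminates. This is exactly how the paper's proof concludes, so your argument is repaired by this one substitution; the rest of your outline (including the remark that minimality of the intermediate trees is not actually needed for the factorisation) goes through.
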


\begin{proof}
As $G$ is finitely generated and $S$ is minimal, there are only finitely many $G$-orbits of edges in $S$. As $f$ is $G$-equivariant and piecewise linear, we can perform a $G$-invariant subdivision of the edges of $S$, where each edge is subdivided into finitely many edgelets, and assume that $f$ sends every edge $e$ of $S$ either to a point, or linearly to an edge $f(e)$ of $T$. As $f$ is $1$-Lipschitz, we have $\ell(f(e))\le\ell(e)$, so after performing finitely many collapse maps on $S$ (at most one per $G$-orbit of edgelets), we can assume that $f$ is an isometry in restriction to every edge $e$ of $S$.

\medskip \noindent
If $S$ is a global isometry, then we are done. Otherwise, there exist two adjacent edges $e,e'\subseteq S$ such that $f(e)=f(e')$. Therefore $f$ factors through a fold of $e$ and $e'$. This operation either decreases the number of $G$-orbits of edges of $S$ (if $e$ and $e'$ were in distinct $G$-orbits), or it increases the cardinality of an edge stabilizer. The process therefore terminates after finitely many steps, thereby proving the lemma.  
\end{proof}

\noindent
Let $G$ be a finitely generated group. A \emph{deformation space of $G$-trees} (as introduced by Guirardel and Levitt in \cite{GL-deformation}) is a space $\mathscr{D}$ of equivariant isometry classes of minimal simplicial $G$-trees $T$, such that for any two trees $T,T'\in\mathscr{D}$, there exist $G$-equivariant maps from $T$ to $T'$ and from $T'$ to $T$ (equivalently all trees in $\mathscr{D}$ have the same elliptic subgroups). We equip $\mathscr{D}$ with the \emph{Gromov--Hausdorff equivariant topology}, introduced by Paulin in \cite{Pau2,Pau}. For this topology, a basis of neighborhoods of a tree $T$ is given by the sets $U_{F,K,\epsilon}$, where $F\subseteq G$ is a finite subset, where $K\subseteq T$ is a finite subset and where $\epsilon>0$, made of all trees $T'$ for which there is a finite subset $K'\subseteq T'$ and a bijection $\theta:K\to K'$ such that for all $x,y\in K$ and all $g\in F$, one has 
$$|d_{T'}(\theta(x),g\theta(y))-d_T(x,gy)|<\epsilon.$$  
We say that a deformation space $\mathscr{D}$ is \emph{$\Aut(G)$-invariant} if for every $S\in\mathscr{D}$ and every $\varphi\in\mathscr{D}$, one has $S\cdot\varphi\in\mathscr{D}$ (where $S\cdot \varphi$ the $G$-tree obtained from $S$ by precomposing the action by $\varphi$).

\subsection{Persistence of long intersections}\label{sub:BarrierSD}

\noindent
Recall the definition of nowhere elliptic elements from Definition~\ref{de:nowhere-elliptic}. The main statement of the subsection is the following proposition.

\begin{prop}\label{prop:barrier}
Let $G$ be a finitely generated infinitely-ended group, and let $T$ be a minimal simplicial $G$-tree with finite edge stabilisers. 

\noindent Then all elements of $G$ which are nowhere elliptic in $\calz$-trees have the persistence of long intersections property in $T$. 
\end{prop}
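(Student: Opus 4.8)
## Proof proposal

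The plan is to argue by contradiction using a limiting (Paulin–Bestvina) argument, exactly as outlined in the introduction. Suppose that some element $g \in G$ which is nowhere elliptic in $\calz$-trees fails the persistence of long intersections property in $T$. Then there is a constant $C \geq 1$ such that for every $n \geq 1$ one can find an automorphism $\varphi_n \in \Aut(G)$ and a subset $\mathcal{X}_n \subseteq G$ of $T$-loxodromic elements with $\Axis_T(g) \cap \bigcap_{h \in \mathcal{X}_n} \Axis_T(h)$ containing a segment of length at least $n\,\|g\|_T$, but such that the corresponding intersection $\Axis_T(\varphi_n(g)) \cap \bigcap_{h \in \mathcal{X}_n} \Axis_T(\varphi_n(h))$ is \emph{not} $\geq C\,\|\varphi_n(g)\|_T$ (including the case where some $\varphi_n(h)$ is elliptic). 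Up to replacing $\mathcal{X}_n$ by a single cleverly chosen element: first I would reduce to the case $\mathcal{X}_n = \{h_n\}$ a single element, by observing that a long common overlap of axes with $\Axis_T(g)$ forces all the $h \in \mathcal{X}_n$ to translate in a compatible way along that overlap, so that a suitable product (or even just one of them, after using Lemma~\ref{lemma:overlap}-type reasoning on $T$ with finite edge stabilisers) has an axis meeting $\Axis_T(g)$ in a comparably long segment and whose image under $\varphi_n$ witnesses the failure. The point of this reduction is to have a manageable sequence $(g, h_n)$ with long overlap in $T$ and $(\varphi_n(g), \varphi_n(h_n))$ with short (or degenerate) overlap.

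Next I would build a limiting tree. Consider the $G$-trees $T \cdot \varphi_n$ (the action precomposed by $\varphi_n$), suitably rescaled so that the translation length of $g$ is normalized; because $T$ has finitely many $G$-orbits of edges (finite generation and minimality) and finite edge stabilisers, one has uniform control, and by the Paulin--Bestvina compactness theorem \cite{Pau,Pau2} (applied in the deformation space / Culler--Morgan compactification of the relevant space of $G$-trees) a subsequence converges, after rescaling, to a nontrivial minimal $G$-tree $T_\infty$. The key features to track through the limit are: (i) arc stabilisers of $T_\infty$ are (finite or) virtually cyclic — indeed they arise as limits of stabilisers of longer and longer segments in the $T \cdot \varphi_n$, which are stabilisers of segments in $T$, hence finite, or the limit degenerates in a controlled way giving at worst a virtually cyclic stabiliser as in the standard acylindricity-in-the-limit arguments; and (ii) $g$ becomes elliptic in $T_\infty$. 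Feature (ii) is where the hypothesis that the overlap $\Axis_T(g) \cap \Axis_T(h_n)$ is long (length $\to \infty$ relative to $\|g\|_T$) while the overlap $\Axis_{T\cdot\varphi_n}(g) \cap \Axis_{T\cdot\varphi_n}(h_n) = \Axis_T(\varphi_n(g)) \cap \Axis_T(\varphi_n(h_n))$ is short (bounded by $C\|\varphi_n(g)\|_T$, or $h_n$ elliptic) gets used: after rescaling so that the overlap has bounded length, $\|g\|_{T\cdot\varphi_n} \to 0$ (equivalently the normalized translation length of $g$ goes to $0$ relative to that of $h_n$, or relative to the diameter of a fixed finite subset), so $g$ is elliptic in the limit $T_\infty$. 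Care is needed to ensure $T_\infty$ is itself nontrivial — one rescales by the maximum of the relevant translation lengths / displacements of a generating set so that something survives, and the point is that $h_n$ (or the whole picture) does not collapse.

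The conclusion is then immediate: $T_\infty$ is a nontrivial real $G$-tree with virtually cyclic arc stabilisers in which $g$ is elliptic, contradicting the assumption that $g$ is nowhere elliptic in $\calz$-trees. I expect the main obstacle to be the bookkeeping in the limiting argument: precisely choosing the rescaling factors so that (a) the limit tree is nontrivial, (b) $g$ is provably elliptic in the limit, and (c) the arc stabilisers of the limit stay virtually cyclic. For (c) one must argue that a sequence of segments $\sigma_n \subseteq \Axis_{T\cdot\varphi_n}(g)$ whose lengths grow without bound (in the rescaled metric) has pointwise stabilisers that are uniformly bounded or at worst virtually cyclic in the limit — this uses that the edge stabilisers of $T$ are finite together with a Stallings-type analysis (Lemma~\ref{lemma:stallings}) comparing the $T \cdot \varphi_n$, or more directly the fact that a group fixing an arbitrarily long segment in $T$ is finite, so any limiting arc stabiliser is a limit of finite groups and an increasing union thereof is controlled. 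The reduction to a single element $h_n$ in the first step, while morally clear, also requires some care about translation directions and the degenerate/elliptic cases, and will likely occupy a fair amount of the write-up.
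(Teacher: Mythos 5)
Your overall shape (argue by contradiction, extract a projective limit of the trees $T\cdot\varphi_n$, show the limit has virtually cyclic arc stabilisers, and contradict the fact that $g$ is nowhere elliptic in $\calz$-trees) matches the paper, and the arc-stabiliser part of your sketch is essentially Lemma~\ref{lemma:small-action}. But there is a genuine gap at the central quantitative step, namely your claim that one can rescale ``so that the overlap has bounded length'' and conclude $\|g\|\to 0$ in the limit. The rescaling factors $\alpha_n$ are not free parameters: they are forced (up to bounded ratio) by the requirement that the projective limit be a nontrivial $G$-tree, and the two pieces of information you have --- the overlap of $\Axis_T(g)$ with the axes of $\mathcal{X}_n$ is long \emph{in the fixed tree $T$}, while the corresponding overlap is short \emph{in $T\cdot\varphi_n$} --- live in different trees and, by themselves, give no inequality between $\|g\|_{T\cdot\varphi_n}$ and $\alpha_n^{-1}$. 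What the paper uses to compare the two is a family of $G$-equivariant piecewise linear maps $f_n$ from a fixed tree to the rescaled trees, with uniformly bounded Lipschitz constant and, crucially, uniformly bounded backtracking $\BBT(f_n)\le M$ (Lemma~\ref{lemma:bbt}, proved via Stallings factorisation and the renormalised volume $\vol$). Pushing the segment of $n$ fundamental domains forward through $f_n$ then yields $n\|g\|_{S_n}\le C\|g\|_{S_n}+2M$, hence $\|g\|_{S_n}\le 2M/(n-C)\to 0$, which is what actually forces $g$ to be elliptic in the limit. Nothing in your proposal plays this role.

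Moreover, such comparison maps do not exist for $T$ itself: to map equivariantly from a fixed tree into $T\cdot\varphi_n$ you need its vertex stabilisers to be elliptic in $T\cdot\varphi_n$, i.e.\ $\varphi_n$ must send elliptic subgroups to elliptic subgroups, which fails in general (for $G=\mathbb{F}_2=\langle a,b\rangle$ and $T$ the Bass--Serre tree of $\langle a\rangle\ast\langle b\rangle$, the automorphism $a\mapsto ab$, $b\mapsto b$ sends $\langle a\rangle$ to a loxodromic subgroup). This is exactly why the paper does not work with $T$ directly: it passes to the $\Aut(G)$-invariant JSJ deformation space over finite subgroups of bounded cardinality (Linnell accessibility), takes a refinement $S$ of $T$ there, proves persistence of long intersections for $S$ (Lemma~\ref{lemma:barrier}), and then transfers the property back to $T$ through the collapse map (Lemmas~\ref{lemma:collapse-fundamental-domains} and~\ref{lemma:barrier-2}). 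Your proposal omits this structural step entirely. Finally, your preliminary reduction to a single element $h_n$ is not needed (the paper handles the whole family $\mathcal{X}_n$ at once), and as stated it is not justified --- the failure concerns the intersection over all $h\in\mathcal{X}_n$, so no single $h$ need witness it; at best a Helly-type argument on subintervals of $\Axis_T(\varphi_n(g))$ reduces to pairs, and the case where some $\varphi_n(h)$ is elliptic needs separate treatment.
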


\noindent
Our proof of Proposition~\ref{prop:barrier} requires a few preliminary lemmas. First of all, we recall that the \emph{bounded backtracking constant} $\BBT(f)$ of a $G$-equivariant map $f$ from a $G$-tree $S$ to a $G$-tree $T$ is defined as the smallest $D\ge 0$ such that for every $x,y\in S$ and every $z\in [x,y]$, one has $d_T(f(z),[f(x),f(y)])\le D$. The following lemma extends \cite[Lemma~3.1]{BFH} and its proof to more general deformation spaces. The idea of using the renormalised volume $\vol$ was suggested to us by Vincent Guirardel. We denote by $\Lip(f)$ the Lipschitz constant of a map $f$.

\begin{lemma}\label{lemma:bbt}
Let $G$ be a finitely generated group and $K \geq 1$ an integer. Let $S$ and $T$ be two minimal simplicial metric $G$-trees whose edge stabilisers have cardinality at most $K$, and let $f:S\to T$ be a $G$-equivariant piecewise linear map. Then 
$$\BBT(f)\le 2K \cdot \Lip(f) \cdot \vol(S/G) $$
where $\displaystyle \vol(S/G):=\sum\limits_{e\in E(S/G)}\ell(e) / |G_e|$ (here $E(S/G)$ is finite as $G$ is finitely generated and the $G$-action on $S$ is minimal).
\end{lemma}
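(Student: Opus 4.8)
The plan is to use Lemma~\ref{lemma:stallings} to factor $f$ as a finite composition of collapse maps and folds, and then to bound $\BBT$ by tracking how it behaves under these elementary moves together with a volume count. First I would reduce to the case where $f$ is $1$-Lipschitz by rescaling the metric on $S$ by $\Lip(f)$: this multiplies $\vol(S/G)$ by $\Lip(f)$, so it suffices to prove $\BBT(f)\le 2K\cdot\vol(S/G)$ under the assumption $\Lip(f)=1$. (One must be slightly careful: $\BBT(f)$ itself is measured in $T$ and is unchanged by rescaling the source metric, whereas the right-hand side scales by $\Lip(f)$, so the reduction is clean.) After this reduction, Lemma~\ref{lemma:stallings} gives $f=f_k\circ\cdots\circ f_1$ where each $f_i:S_{i-1}\to S_i$ is a collapse map or a fold, $S_0=S$, $S_k=T$, all $1$-Lipschitz.

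The heart of the argument is the observation that for $1$-Lipschitz $G$-equivariant piecewise linear maps one has the subadditivity-type estimate $\BBT(g\circ f)\le \Lip(g)\cdot\BBT(f)+\BBT(g)$; since everything in sight is $1$-Lipschitz this reads $\BBT(f_k\circ\cdots\circ f_1)\le \sum_i \BBT(f_i)$. So it remains to bound $\BBT$ of a single collapse map or fold. A collapse map has $\BBT=0$ (collapsing does not create backtracking: it can only decrease the distance a point travels off the geodesic between its endpoints' images). For a fold $f_i$ that identifies two edgelets $(e,e')$ of length $\ell$ based at a common vertex, the amount of backtracking it can introduce is bounded by $\ell$ times the number of times the orbit-collapsed geodesic can wrap through the identified edge, and this in turn is controlled by the orbit of the edge: crucially, because edge stabilisers have cardinality at most $K$, a folded edge can be traversed by a reduced path only a bounded number of times, giving $\BBT(f_i)\le 2K\cdot\ell(e_i)/|(S_{i-1})_{e_i}|$ or, more robustly, $\BBT(f_i)\le 2K\cdot(\text{length of the }G\text{-orbit of }e_i\text{ in }S_{i-1}/G)$. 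Summing over the moves, and using that the total $\ell/|G_e|$-mass involved is at most $\vol(S/G)$ (the initial tree dominates, since collapses only decrease volume and folds identify orbits, never increasing the total $\ell/|G_e|$-mass), yields $\sum_i\BBT(f_i)\le 2K\cdot\vol(S/G)$.

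I expect the main obstacle to be making the bookkeeping in the last step rigorous: one needs to argue that the sum over all the folds in the Stallings decomposition of the quantity $2K\cdot\ell(e_i)/|(S_{i-1})_{e_i}|$ is genuinely bounded by $2K\cdot\vol(S/G)$, i.e.\ that the total ``fold-mass'' does not exceed the volume of the source. This is plausible because each fold either merges two distinct edge-orbits (replacing two orbits by one, decreasing the number of orbits, and the new orbit's $\ell/|G_e|$ is at most the sum of the two old ones since stabilisers can only grow) or increases an edge-stabiliser (which strictly decreases that orbit's $\ell/|G_e|$ contribution), so the relevant monovariant is decreasing; one has to phrase this carefully, perhaps by running the decomposition so that each $G$-orbit of edges is folded a controlled number of times, or alternatively by a direct argument: pick $x,y\in S$ and $z\in[x,y]$, and for each edge $e$ of $T$ count how many times $f^{-1}$-preimages of $e$ appear along $[x,y]$ versus $[f(x),f(y)]$, bounding the discrepancy edge-orbit by edge-orbit using the stabiliser-size hypothesis. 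Either way, the structure of the proof — reduce to $1$-Lipschitz, factor via Stallings, bound each elementary move — is what I would write, with the volume count being the delicate point; I would in fact likely present the direct combinatorial argument, following the proof of \cite[Lemma~3.1]{BFH} and simply inserting the factor $K$ wherever the Bestvina--Feighn--Handel argument uses freeness of the action.
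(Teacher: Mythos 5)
Your proposal is correct and follows essentially the same route as the paper: rescale so that $f$ is $1$-Lipschitz, factor it into collapses and folds via Lemma~\ref{lemma:stallings}, use subadditivity of $\BBT$ under composition, observe that collapses contribute $0$, and control the folds by a volume count keyed to the bound $K$ on edge stabilisers. The bookkeeping you flag as the delicate point is resolved in the paper by making your monovariant quantitative and telescoping: one proves the stronger inequality $\BBT(f)\le 2K\left(\vol(S/G)-\vol(T/G)\right)$, checking for each fold that $\BBT=\ell(e)$ while $\vol$ drops by at least $\ell(e)/2K$ (for a same-orbit fold the stabiliser at least doubles, for distinct orbits the drop is at least $\ell(e)/|G_e|$), so the per-move inequalities sum up automatically.
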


\begin{proof}
Up to rescaling the metric of $S$ by a factor $1/\Lip(f)$, we may assume without loss of generality that $f$ is $1$-Lipschitz. We will prove more precisely that 
$$\BBT(f)\le 2K\left(\vol(S/G)-\vol(T/G)\right).$$ 
Notice that if $U$ is a simplicial metric $G$-tree, and if $g:S\to U$ and $h:U\to T$ are $1$-Lipschitz $G$-equivariant maps, then $\BBT(g\circ h)\le\BBT(g)+\BBT(h)$. As every $G$-equivariant $1$-Lipschitz map from $S$ to $T$ factors (up to subdividing the edges of $S$) as a composition of collapses and folds (Lemma~\ref{lemma:stallings}), it is therefore enough to observe that the above inequality holds when $f$ is a collapse map or a fold. 

\medskip \noindent
First, assume that $f$ is a collapse map. Then $\BBT(f)=0$, while the right-hand side of the above inequality is always non-negative. So the inequality holds.

\medskip \noindent
Next, assume that $f$ folds two adjacent edges $e$ and $e'$ in different orbits (with the same length $\ell(e)$). On the one hand, observe that $\BBT(f)=\ell(e)$. Indeed, fix three vertices $x,y \in S$ and $z \in [x,y]$, and denote by $\sigma_1, \ldots, \sigma_r$ the subsegments of length two in $[x,y]$ which are $G$-translates of $e \cup e'$. Then $f(z)$ belongs to the segment $[f(x),f(y)]$ if $z$ is not the interior vertex of a subsegment $\sigma_i$, i.e. $d_T(f(z),[f(x),f(y)])=0$; and otherwise, if $z$ is the interior vertex of some $\sigma_i$, then the closest point of $[f(x),f(y)]$ to $f(z)$ is the image of an endpoint of $\sigma_i$, i.e. $d_T(f(z),[f(x),f(y)])=\ell(e)$. This shows that $\BBT(f)=\ell(e)$, as desired.
On the other hand, 
$$2K \left( \vol(S/G)-\vol(T/G) \right)  = 2K\ell(e)\left(\frac{1}{|G_e|}+\frac{1}{|G_{e'}|}-\frac{1}{|\langle G_e,G_{e'}\rangle|}\right).$$ Since $|\langle G_e, G_e' \rangle| \geq |\langle G_e' \rangle|$, we have
$$2K\left(\vol(S/G)-\vol(T/G)\right)  \geq \ell(e) \frac{2K}{|G_e|} \geq \ell(e)= \BBT(f)$$
as desired. 

\medskip \noindent
Finally, if $f$ folds two distinct edges $e$ and $ge$ in the same orbit, then $G_{f(e)}=\langle G_e,g\rangle$ and $\BBT(f)=\ell(e)$. Therefore 
$$2K\left(\vol(S/G)-\vol(T/G)\right)=2 \ell(e)K \left(\frac{1}{|G_e|} -\frac{1}{|\langle G_e,g\rangle|}\right).$$
As $ge \neq e$, the edge group $G_e$ is a proper subgroup of $\langle G_e,g \rangle$, hence $|\langle G_e,g \rangle | \geq 2 | G_e|$. Consequently, 
$$2K\left(\vol(S/G)-\vol(T/G)\right)\ge\ell(e)\frac{K}{|G_e|} \geq \ell(e) = \BBT(f),$$ 
as desired.
\end{proof}

\noindent 
All simplicial trees considered in the following lemma are equipped with the simplicial metric, where each edge is assigned length $1$.

\begin{lemma}\label{lemma:small-action}
Let $G$ be a finitely generated infinitely-ended group, and let $\cald$ be an $\mathrm{Aut}(G)$-invariant deformation space of minimal simplicial $G$-trees with finite edge stabilisers of bounded cardinality. Let $S\in\cald$, and let $(\varphi_n)_{n\in\mathbb{N}}\in\Aut(G)^{\mathbb{N}}$. Then $(S\cdot\varphi_n)_{n\in\mathbb{N}}$ has a subsequence that converges projectively in the equivariant Gromov--Hausdorff topology to a nontrivial real $G$-tree with virtually cyclic arc stabilisers. 
\end{lemma}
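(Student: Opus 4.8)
The plan is to run the classical Bestvina--Feighn / Paulin limiting machinery, using the bounded backtracking estimate of Lemma~\ref{lemma:bbt} as the crucial uniform bound that keeps the rescaled trees nondegenerate. First I would fix a finite generating set $X_0$ of $G$ and, for each $n$, consider the tree $S\cdot\varphi_n\in\cald$. Since $\cald$ is $\Aut(G)$-invariant, each $S\cdot\varphi_n$ lies in the same deformation space, so all its edge stabilisers are finite of cardinality at most some fixed $K$, and $S\cdot\varphi_n$ is minimal. Let $\mu_n:=\max_{s\in X_0}\|s\|_{S\cdot\varphi_n}$ be the maximal translation length of a generator. Because $G$ is infinitely-ended and $\cald$ consists of trees with finite edge stabilisers, no tree in $\cald$ has a global fixed point (a minimal tree with a global fixed point is a point, which is excluded since $G$ is not finite — or more precisely the $G$-action on $S\cdot\varphi_n$ is the same abstract action as on $S$ up to relabelling and $S$ is nontrivial), so $\mu_n>0$ for every $n$; rescale the metric on $S\cdot\varphi_n$ by $1/\mu_n$ to obtain trees $S_n$ in which every generator has translation length at most $1$ and at least one generator has translation length exactly $1$. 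By Paulin's compactness theorem \cite{Pau,Pau2} (equivariant Gromov--Hausdorff compactness, which applies because the $S_n$ have uniformly bounded covolume-free quotient data and $G$ is finitely generated), after passing to a subsequence the $S_n$ converge to a real $G$-tree $T_\infty$ with an isometric $G$-action.

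The next step is to check that $T_\infty$ is nontrivial, i.e.\ that $G$ does not fix a point in $T_\infty$. This is where I would invoke Lemma~\ref{lemma:bbt}. Fix a basepoint $x_0$ in $S$ and push it forward to get basepoints in each $S\cdot\varphi_n$; one needs a lower bound on $\max_{s\in X_0} d_{S_n}(x_n, s x_n)$ that does not go to zero. Equivalently, in the unrescaled trees one wants to show that the diameter of the $G$-orbit of a well-chosen point is comparable to $\mu_n$, not much larger. The point is that $S\cdot\varphi_n$ and $S$ are related by the $1$-Lipschitz-after-rescaling equivariant maps whose existence is guaranteed by the deformation space structure, and Lemma~\ref{lemma:bbt} bounds the bounded backtracking constant of such a map in terms of $\Lip(f)\cdot\vol(S/G)$; since $\vol(S/G)$ is a fixed constant (depending only on $S$), one gets that the minimal subtree of $S\cdot\varphi_n$ has diameter controlled linearly by $\mu_n$, which prevents the orbit maps from collapsing in the limit. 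Concretely, if $G$ fixed a point in $T_\infty$, then $\max_{s\in X_0}d_{S_n}(x_n,sx_n)\to 0$ for a suitable sequence of basepoints $x_n$; but the normalisation forces some generator to translate by exactly $1$, and a bounded-backtracking argument shows that a generator translating by $1$ moves any basepoint in its minimal subtree by a definite amount, giving a contradiction. So $T_\infty$ is a nontrivial real $G$-tree.

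Finally I would identify the arc stabilisers of $T_\infty$. This is the standard step: an element $g\in G$ fixing a nondegenerate arc $[a,b]$ of $T_\infty$ is, by the definition of the equivariant Gromov--Hausdorff topology, a limit of elements fixing longer and longer arcs in the $S_n$ (after rescaling, arcs whose $S_n$-length stays bounded below); lifting back to $S\cdot\varphi_n$ and using that edge stabilisers there have cardinality at most $K$, a Morgan--Shalen / Bestvina--Feighn type argument on the stabilisers of arcs in simplicial trees with finite edge stabilisers shows that the stabiliser of any arc in $T_\infty$ is virtually cyclic (indeed it embeds, up to finite index, into a group generated by an increasing union of finite edge-stabiliser groups along a segment, and the finiteness of edge stabilisers forces this to stabilise to a virtually cyclic group; one uses here that a subgroup fixing an arc of $T_\infty$ fixes arbitrarily long segments in $S_n$, and that in a simplicial tree with finite edge stabilisers the pointwise stabiliser of a long segment is finite unless some element acts loxodromically along it). This gives that $T_\infty$ has virtually cyclic arc stabilisers, completing the proof. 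The main obstacle is the nontriviality of the limit tree, which is precisely the place where the uniform bounded backtracking bound from Lemma~\ref{lemma:bbt}, and hence the hypothesis that edge stabilisers in $\cald$ have bounded cardinality, is indispensable.
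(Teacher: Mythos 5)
Your overall strategy (rescale, extract a Gromov--Hausdorff limit \`a la Culler--Morgan/Paulin, then analyse arc stabilisers) is the same as the paper's, but the execution has a genuine gap at the crucial step, the arc stabilisers. You assert that a subgroup fixing a nondegenerate arc of $T_\infty$ must ``fix arbitrarily long segments in $S_n$'' and then argue via unions of finite edge stabilisers. This premise is false: an element $h$ stabilising an arc of the limit only satisfies $d_{S_n}(x_n,hx_n)\to 0$ and $d_{S_n}(y_n,hy_n)\to 0$ in the \emph{rescaled} trees, so $h$ may well be loxodromic in $S\cdot\varphi_n$ with unrescaled translation length tending to infinity, as long as it is $o(1/\alpha_n)$; its axis merely fellow-travels the approximating segment. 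These small-rescaled-translation loxodromics are precisely the reason the correct conclusion is ``virtually cyclic'' rather than ``finite of cardinality at most $K$'' (which is what your mechanism would give if it worked), so your sketch misses the main point of the lemma. The paper handles this by picking $M+1$ distinct elements $g_0,\dots,g_M\in\Stab(I)$ (with $M$ the bound on edge-stabiliser cardinality), observing that they cannot all fix a common edge near the approximating segment, hence some $g_i=k$ is loxodromic in infinitely many $S\cdot\varphi_n$; since edge stabilisers are uniformly finite, $k$ is uniformly WPD there, and for every $h\in\Stab(I)$ the characteristic sets of $k$ and $h$ overlap along segments that are long compared to both translation lengths, so Lemma~\ref{lemma:overlap} yields $h\in E(k)$ and $\Stab(I)\subseteq E(k)$, which is virtually cyclic. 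Some argument of this kind (or an appeal to a precise result on limits of actions with uniformly finite arc stabilisers) is needed; your ``increasing union of finite edge-stabiliser groups'' argument does not close this.

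Two smaller issues: your normalisation $\mu_n=\max_{s\in X_0}\|s\|_{S\cdot\varphi_n}$ can be zero (all generators may be elliptic in a nontrivial minimal action, e.g.\ $G=A\ast B$ with generators in $A\cup B$, and this persists under automorphisms), so the stated justification that $\mu_n>0$ is incorrect; the standard fix is to take the maximum over $(X_0\cup\{1\})^2$ (Serre's lemma) or, as the paper does, to invoke Culler--Morgan/Paulin compactness of the projectivised space of nonzero length functions directly, using that the infinitely-ended group $G$ contains a nonabelian free group -- this also gives nontriviality of the limit for free, since translation lengths converge. In particular, your appeal to Lemma~\ref{lemma:bbt} to rule out collapse is neither needed nor really meaningful here (that lemma controls bounded backtracking of maps between trees with bounded edge stabilisers; it plays its role later, in Lemma~\ref{lemma:barrier}, not in this compactness statement).
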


\begin{proof}
As $G$ is infinitely-ended, it contains a nonabelian free group. Therefore, by \cite{CM,Pau2}, the sequence $(S\cdot\varphi_n)_{n\in\mathbb{N}}$ has a subsequence that converges projectively to a nontrivial real $G$-tree $T$ in the Gromov--Hausdorff equvariant topology. This means that there exists a sequence $(\alpha_n)_{n\in\mathbb{N}}\in (\mathbb{R}_+^*)^{\mathbb{N}}$ such that the rescaled $G$-trees $\alpha_nS\cdot\varphi_n$ converge non-projectively to $T$. We aim to show that $T$ has virtually cyclic arc stabilisers.

\medskip\noindent If the sequence $(\alpha_n)_{n\in\mathbb{N}}$ does not converge to $0$, then there is a positive lower bound to the translation length in $\alpha_n S\cdot\varphi_n$ of every infinite-order element of $G$. So in the limit, all point stabilizers (in particular all arc stabilizers) in $T$ are finite. 

\medskip\noindent We can therefore assume that the renormalizing sequence $(\alpha_n)_{n\in\mathbb{N}}$ converges to $0$. Let $I=[x,y]\subseteq T$ be a nondegenerate arc. We aim to prove that $\Stab(I)$ is virtually cyclic, so we can assume without loss of generality that $\Stab(I)$ is infinite, as otherwise the conclusion is obvious. Let $M\ge 0$ be an upper bound to the cardinality of an edge stabiliser of a tree in $\mathscr{D}$, and let $g_0,\dots,g_M\in\Stab(I)$ be pairwise distinct elements. Choose approximations $(x_n)_{n\in\mathbb{N}}$ of $x$ and $(y_n)_{n\in\mathbb{N}}$ of $y$ in the trees $\alpha_n S\cdot\varphi_n$. Then for every $h\in\Stab(I)$, the characteristic sets of $g_0,\dots,g_M$ and $h$ in $\alpha_n S\cdot\varphi_n$ all pass arbitrary close to $x_n$ and $y_n$, and the translation lengths of $g_0,\dots,g_M$ and $h$ in $\alpha_n S\cdot\varphi_n$ are all arbitrary small compared to the distance between $x_n$ and $y_n$. As edge stabilisers of $S\cdot\varphi_n$ have cardinality at most $M$, there exists $k_n\in\{g_0,\dots,g_M\}$ which is loxodromic in $S\cdot\varphi_n$. Up to passing to a subsequence, we can assume that $k_n$ does not depend on $n$, and we denote it by $k$. As edge stabilisers of $S$ are finite, the element $k$ is WPD (with uniform constants) in all trees $S\cdot\varphi_n$. In addition, for all sufficiently large $n$, the axis of $k$ and the characteristic set of $h$ in $\alpha_n S\cdot\varphi_n$ share a subsegment of length arbitrary close to $d_T(x,y)$, while their translation length is arbitrary close to $0$. We can therefore apply Lemma~\ref{lemma:overlap} to deduce that $h\in E(k)$. Therefore $\Stab(I)\subseteq E(k)$, which is virtually cyclic.
\end{proof}

\begin{lemma}\label{lemma:barrier}
Let $G$ be a finitely generated infinitely-ended group, and let $\mathscr{D}$ be an $\mathrm{Aut}(G)$-invariant deformation space of minimal simplicial $G$-trees with finite edge stabilisers of bounded cardinality. Let $S\in\mathscr{D}$, and let $g\in G$ be an element which is nowhere elliptic in $\calz$-trees (in particular $g$ is $S$-loxodromic).

\noindent  Then $g$ has the persistence of long intersections property in $S$. 
\end{lemma}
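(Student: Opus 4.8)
The plan is to argue by contradiction using a compactness / limiting argument, in the spirit of the proof of Lemma~\ref{lemma:small-action}. Suppose $g$ fails the persistence of long intersections property in $S$. Then there is a constant $C\ge 1$ such that for every $n\in\mathbb{N}$ one can find an automorphism $\varphi_n\in\Aut(G)$ and a subset $\mathcal{X}_n\subseteq G$ consisting of $S$-loxodromic elements, with the property that $\Axis_S(g)\cap\bigcap_{h\in\mathcal{X}_n}\Axis_S(h)$ contains a segment of length at least $n\|g\|_S$, yet either some element of $\varphi_n(\mathcal{X}_n)$ is not $S$-loxodromic, or $\Axis_S(\varphi_n(g))\cap\bigcap_{h\in\mathcal{X}_n}\Axis_S(\varphi_n(h))$ fails to contain a segment of length $C\|\varphi_n(g)\|_S$. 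Applying $\varphi_n^{-1}$ to the $G$-tree $S$, this says equivalently (writing $S_n := S\cdot\varphi_n$) that in $S$ the long common overlap of the axes persists, while in $S_n$ the analogous overlap for $g$ and the $\mathcal{X}_n$ is short relative to $\|g\|_{S_n}$. Since $S_n\in\mathscr{D}$, all $S_n$ have finite edge stabilisers of bounded cardinality, so by Lemma~\ref{lemma:small-action} a subsequence of $(S_n)$ converges projectively to a nontrivial real $G$-tree $T$ with virtually cyclic arc stabilisers, i.e.\ a $\calz$-tree.

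The key point is then to control backtracking. First I would pass (after subdividing and rescaling) to $G$-equivariant piecewise-linear maps and invoke Lemma~\ref{lemma:bbt}: since all trees in $\mathscr{D}$ have edge stabilisers of cardinality $\le K$ and bounded renormalised volume (the renormalised volume of a tree in a fixed deformation space is bounded, $G$ being finitely generated with minimal action), there is a uniform bound on $\BBT$ for equivariant maps between trees of $\mathscr{D}$ after rescaling to a common normalisation. Concretely, for each $n$ pick a $G$-equivariant $1$-Lipschitz piecewise-linear map $f_n\colon \alpha_n S_n\to T_n$ where $T_n$ is a scaled copy of $S$ chosen so that $\|g\|$ is comparable in source and target; Lemma~\ref{lemma:bbt} then gives $\BBT(f_n)\le 2K\vol(\alpha_n S_n/G) = 2K\alpha_n\vol(S_n/G)$, which tends to $0$ because $\alpha_n\to 0$ (if $\alpha_n\not\to 0$ the limit tree has finite arc stabilisers and we are in an easier subcase, handled exactly as in Lemma~\ref{lemma:small-action}). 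Bounded backtracking tending to zero is precisely what lets overlaps of axes persist in the limit: a long overlap of $\Axis(g)$ and $\Axis(h)$ downstairs lifts, up to an error controlled by $\BBT$, to a long overlap upstairs.

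With this in hand, the contradiction is assembled as follows. The hypothesis says that in $S$ (hence in a fixed scaling, uniformly) the axes of $g$ and of the elements of $\mathcal{X}_n$ share an overlap of length $\to\infty$; transporting this along the $f_n^{-1}$ direction and using $\BBT(f_n)\to 0$, in the rescaled trees $\alpha_n S_n$ the overlap between $\Axis(g)$ and $\bigcap_{h\in\mathcal{X}_n}\Axis(h)$ has length bounded below by a definite positive constant (in fact tending to infinity) relative to $\|g\|_{\alpha_n S_n}$. Passing to the limit $T$, the element $g$ then fixes a nondegenerate arc of $T$ — one can argue that either $\|g\|_T=0$ directly, or $g$ has an axis in $T$ which it must share with infinitely many distinct conjugates, forcing, via Lemma~\ref{lemma:overlap} applied in the (WPD, uniform-constant) trees $S_n$, the $\mathcal{X}_n$ into $E(g)$ and collapsing the picture — so $g$ is elliptic in the $\calz$-tree $T$. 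This contradicts the assumption that $g$ is nowhere elliptic in $\calz$-trees.

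\medskip\noindent
The main obstacle I anticipate is the bookkeeping of rescalings: one must choose the normalisations $\alpha_n$ so that simultaneously (i) $\alpha_n S_n\to T$ non-projectively, (ii) the translation length $\|g\|_{\alpha_n S_n}$ stays bounded away from $0$ and $\infty$ (using that $g$ is loxodromic in every tree of $\mathscr{D}$, being nowhere elliptic in $\calz$-trees), and (iii) the backtracking bound $2K\alpha_n\vol(S_n/G)$ still tends to $0$ — the last point requiring that $\vol(S_n/G)$ not blow up too fast relative to $1/\alpha_n$, which is exactly where one uses that $\|g\|_{S_n}$, being the translation length of a single fixed element whose axis crosses a controlled number of edge-orbits, is comparable to a quantity of the same order as $\vol(S_n/G)$. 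Making these three requirements compatible — essentially normalising by $\|g\|$ rather than by volume — is the technical heart of the argument; once the normalisation is fixed, the limiting argument and the appeal to Lemma~\ref{lemma:overlap} are routine.
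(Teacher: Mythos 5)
Your skeleton is the same as the paper's (argue by contradiction, use Lemma~\ref{lemma:small-action} to extract a projective limit $T$ which is a nontrivial $\calz$-tree, and control backtracking via Lemma~\ref{lemma:bbt}), but the crucial transport step has a genuine gap: your equivariant maps go in the wrong direction. You take $f_n\colon \alpha_n S_n\to T_n$ (from the degenerating rescaled twisted trees to a scaled copy of $S$) so as to force $\BBT(f_n)\to 0$, and then propose to ``transport the long overlap along the $f_n^{-1}$ direction''. These maps are compositions of folds and collapses, hence are far from injective and have no inverses, and the bounded backtracking constant only constrains \emph{images} of segments: it says $f_n(z)$ stays close to $[f_n(x),f_n(y)]$ for $z\in[x,y]$. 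It gives no control whatsoever on preimages, so knowing that the axes of $g$ and of the elements of $\mathcal{X}_n$ have a long common overlap in the \emph{target} (a scaled copy of $S$) tells you nothing about their configuration in the \emph{source} $S_n$ --- which is exactly what the contradiction requires, since the failure of persistence is a statement about shortness of overlaps (or ellipticity) in $S\cdot\varphi_n$. In short, the long-overlap hypothesis lives in $S$ and the shortness lives in $S_n$, so you need equivariant maps from $S$ to $S_n:=\alpha_n S\cdot\varphi_n$, not the other way around; with your direction, no contradiction is reachable, and your closing paragraph (ellipticity of $g$ in $T$ via Lemma~\ref{lemma:overlap} and conjugates) does not assemble into one.

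The paper's proof takes the correct direction: it builds one Lipschitz equivariant map $f\colon S\to T$ (sending orbit representatives of vertices to points fixed by their stabilisers in $T$), approximates it by $2L$-Lipschitz equivariant maps $f_n\colon S\to S_n$, and applies Lemma~\ref{lemma:bbt} with \emph{source} $S$, yielding the uniform bound $\BBT(f_n)\le 4KL\,\vol(S/G)=:M$. Note that no decay of $\BBT$ is needed: pushing forward a segment of $\Axis_S(g)\cap\bigcap_{h\in\mathcal{X}_n}\Axis_S(h)$ containing $n$ fundamental domains, and confronting it with the assumption that the corresponding overlap in $S_n$ has at most $C$ fundamental domains (or that some $h$ is $S_n$-elliptic), gives $n\|g\|_{S_n}\le C\|g\|_{S_n}+O(M)$, hence $\|g\|_{S_n}\to 0$ and $\|g\|_T=0$, contradicting that $g$ is nowhere elliptic in $\calz$-trees. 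This also dissolves the normalisation worries in your last paragraph: $\vol(S\cdot\varphi_n/G)=\vol(S/G)$ is constant (precomposing the action by an automorphism does not change the quotient volume), the constant $M$ only involves the fixed tree $S$, and no comparison between $\|g\|_{S_n}$ and the covolume, nor any choice of target scaling making $f_n$ both $1$-Lipschitz and $\|g\|$-preserving (which is not available in general), is required.
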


\begin{proof}
Assume towards a contradiction that the conclusion of the lemma fails. Then we can find $C\ge 1$, a sequence of subsets $(\mathcal{X}_n)_{n\in\mathbb{N}}\in (2^G)^\mathbb{N}$ made of $S$-loxodromic elements, and a sequence $(\varphi_n)_{n\in\mathbb{N}}\in\Aut(G)^\mathbb{N}$, such that for every $n\in\mathbb{N}$, the intersection $$\Axis_S(g)\cap\bigcap_{h\in\mathcal{X}_n}\Axis_S(h)$$ contains $n$ fundamental domains of the axis of $g$, while either $\varphi_n(\mathcal{X}_n)$ contains an $S$-elliptic element, or the intersection $$\Axis_S(\varphi_n(g))\cap\bigcap_{h\in\mathcal{X}_n}\Axis_S(\varphi_n(h))$$ contains at most $C$ fundamental domains of the axis of $\varphi_n(g)$. 

\medskip \noindent
By Lemma~\ref{lemma:small-action}, up to passing to a subsequence, the sequence $(S\cdot\varphi_n)_{n\in\mathbb{N}}$ converges projectively to a nontrivial $G$-tree $T$ with virtually cyclic arc stabilisers.  
This means that there exists a sequence $(\alpha_n)_{n\in\mathbb{N}}\in (\mathbb{R}_+^*)^\mathbb{N}$ such that the rescaled $G$-trees $\alpha_n S\cdot\varphi_n$ converge (non-projectively) to $T$. For every $n\in\mathbb{N}$, we let $S_n:=\alpha_n S\cdot\varphi_n$.

\medskip\noindent We will now construct a piecewise linear Lipschitz $G$-equivariant map $f:S\to T$. Indeed, as $G$ is finitely generated and $S$ is a minimal $G$-tree, the tree $S$ has finitely many $G$-orbits of vertices, and finitely many $G$-orbits of edges. Let $V$ be a finite set of representatives of the $G$-orbits of vertices of $S$. For every $v\in V$, the $G$-stabiliser $G_{v}$ of $v$ is elliptic in all trees in $\mathscr{D}$, and therefore it is also elliptic in the limiting tree $T$, so we can find $x_v\in T$ which is fixed by $G_{v}$. Now, every edge $e\subseteq S$ is of the form $[g_1v_1,g_2v_2]$ with $g_1,g_2\in G$ and $v_1,v_2\in V$. The $G$-stabiliser of $e$ fixes both $g_1v_1$ and $g_2v_2$, and therefore it also fixes $g_1x_{v_1}$ and $g_2x_{v_2}$, as well as the whole segment joining them. We can therefore define a $G$-equivariant map $f:S\to T$ that linearly sends the edge $[g_1v_1,g_2v_2]$ to the segment $[g_1x_{v_1},g_2x_{v_2}]$. As there are only finitely many $G$-orbits of edges in $S$, this map is Lipschitz, thus proving our claim.   

\medskip\noindent Let $L>0$ be such that the map $f$ is $L$-Lipschitz. We now claim that for every sufficiently large $n\in\mathbb{N}$, there exists a $2L$-Lipschitz piecewise linear $G$-equivariant map $f_n:S\to S_n$. Indeed, this is proved by approximating every point $x_v$ by a sequence of vertices $w_n(v)\in S_n$, in such a way that the $G$-stabilizer of $v$ stabilizes $w_n(v)$, and considering a piecewise linear map $f_n$ sending $v$ to $w_n(v)$ (as constructed in the previous paragraph). In this way $f_n$ converges to $f$ in the Gromov--Hausdorff equivariant topology on the set of morphisms considered by Guirardel and Levitt in \cite[Section~3.2]{GL-os}, and as $f$ is $L$-Lipschitz, it follows that $f_n$ is $2L$-Lipschitz for $n$ sufficiently large. 

\medskip\noindent Up to restricting to a subsequence, we will now assume that for every $n\in\mathbb{N}$, there exists a $2L$-Lipschitz piecewise linear $G$-equivariant map $f_n:S\to S_n$. Let $K>0$ be the maximal cardinality of an edge stabiliser of a tree in $\mathscr{D}$ (which is bounded by assumption). By Lemma~\ref{lemma:bbt}, letting $M:=4KL\cdot\vol(S/G)$, we have $\BBT(f_n)\le M$ for every $n\in\mathbb{N}$.

\medskip \noindent
Fix $n \in \mathbb{N}$. Let $I_n$ be a segment contained in $$\Axis_S(g)\cap\bigcap_{h\in\mathcal{X}_n}\Axis_S(h)$$ which  contains $n$ fundamental domains of $\Axis_S(g)$. As $\BBT(f_n)\le M$, the axis in $S$ of every $S$-loxodromic element $k\in G$ is sent by $f_n$ within the $M$-neighbourhood of $\Char_{S_n}(k)$: this follows from the fact that for every $x\in S$, one has $$d_{S_n}(f_n(kx),[f_n(x),f_n(k^2x)])=d_{S_n}(f_n(x),\Char_{S_n}(k)).$$ Therefore $f_n(I_n)$ is contained both in an $M$-neighbourhood of $\Axis_{S_n}(g)$, and also in an $M$-neighbourhood of $\Char_{S_n}(h)$ for every $h\in\mathcal{X}_n$. In addition $f_n(I_n)$ contains $n$ fundamental domains of $\Axis_{S_n}(g)$: indeed $I_n$ contains a subsegment of the form $[x,g^nx]$, and denoting by $y$ the projection of $f_n(x)$ to $\Axis_{S_n}(g)$, the image $f_n([x,g^n x])$ contains $[y,g^ny]$. 

\medskip \noindent Assume first that all elements in $\mathcal{X}_n$ are $S_n$-loxodromic. If $$\Axis_{S_n}(g)\cap\bigcap_{h\in\mathcal{X}_n}\Axis_{S_n}(h)$$ is empty, then $f_n(I_n)$ has diameter at most $2M$, so $n\|g\|_{S_n}\leq 2M.$ If  $$\Axis_{S_n}(g)\cap\bigcap_{h\in\mathcal{X}_n}\Axis_{S_n}(h)$$ is non-empty, since it contains at most $C$ fundamental domains of $\Axis_{S_n}(g)$, we deduce that 
$$n \|g\|_{S_n} \leq C \| g\|_{S_n} +2M, \text{ hence } ||g||_{S_n}\le\frac{2M}{n-C}.$$ 
In both cases, we deduce in the limit that $||g||_T=0$. This contradicts the fact that $g$ is nowhere elliptic in $\calz$-trees.

\medskip \noindent Assume now that there exists an element $h_n\in\mathcal{X}_n$ which is $S_n$-elliptic. As $h_n$ is $S$-loxodromic, it is of infinite order, and therefore $h_n$ fixes exactly one point in $S_n$. We deduce that $n\|g\|_{S_n}\le 2M$ and reach a contradiction as in the previous case.
\end{proof}

\begin{lemma}\label{lemma:collapse-fundamental-domains}
Let $G$ be a group. Let $S$ and $T$ be two simplicial $G$-trees, and assume that there exists a $G$-equivariant collapse map $S\to T$. Let $g,h\in G$, and assume that $g$ is loxodromic in both $S$ and $T$. Let $n_S$ (resp.\ $n_T$) be the number of fundamental domains of $\Axis_S(g)$ (resp.\ $\Axis_T(g)$) contained in $\Char_S(h)$ (resp.\ $\Char_T(h)$).

\noindent Then $n_S\le n_T\le n_S+2$.
\end{lemma}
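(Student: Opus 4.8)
The plan is to work directly with the collapse map $p\colon S\to T$. Write $A:=\Axis_S(g)$, $A':=\Axis_T(g)$, and let $F\subseteq S$ be the $G$-invariant subforest that $p$ collapses. Recall from the proof of Lemma~\ref{lemma:bbt} that $\BBT(p)=0$; equivalently, $p$ maps every geodesic segment $[x,y]$ of $S$ monotonically onto the geodesic segment $[p(x),p(y)]$ of $T$, and $p$ is an isometry in restriction to $S\setminus F$, so that $\mathrm{length}(p(I))=\mathrm{length}(I)-\mathrm{length}(I\cap F)$ for every segment $I\subseteq S$. We read ``$\Char_S(h)$ contains $k$ fundamental domains of $\Axis_S(g)$'' as: $C_S:=A\cap\Char_S(h)$ contains a subsegment of length $k\|g\|_S$, i.e. $n_S=\lfloor\mathrm{length}(C_S)/\|g\|_S\rfloor$ (if $A\subseteq\Char_S(h)$ there is nothing to prove), and likewise $n_T=\lfloor\mathrm{length}(C_T)/\|g\|_T\rfloor$ with $C_T:=A'\cap\Char_T(h)$.

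The next step would be to record three facts. (a) By $\BBT(p)=0$, since $gx\in[x,g^2x]$ for every $x\in A$, we get $g\cdot p(x)=p(gx)\in[p(x),p(g^2x)]=[p(x),g^2p(x)]$; as $g$ is loxodromic in $T$, this forces $p(x)\in A'$ (writing $q'$ for the projection of $p(x)$ to $A'$, the branch at $gq'$ carrying $g\cdot p(x)$ meets none of the three pieces of $[p(x),g^2p(x)]$ unless $p(x)=q'$), and since $p(A)$ is $g$-invariant and connected, $p(A)=A'$. (b) Hence $p$ maps each fundamental domain $[x_0,gx_0]$ of $A$ onto the fundamental domain $[p(x_0),gp(x_0)]$ of $A'$, and $\|g\|_T=\|g\|_S-\mu$ with $\mu:=\mathrm{length}([x_0,gx_0]\cap F)$ independent of the chosen fundamental domain. (c) A collapse sends $\Fix_S(h)$ into $\Fix_T(h)$, sends $\Axis_S(h)$ onto $\Axis_T(h)$ when $h$ stays loxodromic, and collapses $\Axis_S(h)$ to a point when $h$ becomes elliptic; since a collapse never turns an elliptic element loxodromic, in all cases $p(\Char_S(h))\subseteq\Char_T(h)$. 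The lower bound $n_S\le n_T$ then follows at once: $C_S$ contains a segment $[x_0,g^{n_S}x_0]$ of length $n_S\|g\|_S$, whose image is a segment of $A'$ of length $n_S\|g\|_T$ contained in $p(A)\cap p(\Char_S(h))\subseteq C_T$.

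For the upper bound, I would control $\widetilde C:=p^{-1}(\Char_T(h))\cap A$. One has $\widetilde C=C_S\cup\bigcup_{F_0}(F_0\cap A)$, where $F_0$ runs over the components of $F$ with $p(F_0)\in\Char_T(h)$. Each such $F_0$ meeting $A$ also meets $\Char_S(h)$ (e.g. if $h$ stays loxodromic, $p(F_0)\in\Axis_T(h)=p(\Axis_S(h))$ forces $F_0$ to contain a point of $\Axis_S(h)$; the other cases are analogous); since $A$ and $\Char_S(h)$ are subtrees meeting along $C_S$, the geodesic inside the subtree $F_0$ joining a point of $A$ to a point of $\Char_S(h)$ passes through $C_S$, so $F_0\cap A$ meets $C_S$ — hence $\widetilde C$ is a single segment. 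Moreover $\mathrm{length}(F_0\cap A)<\|g\|_S$, since otherwise $gF_0=F_0$, whence $A\subseteq F_0$ and $g$ would be elliptic in $T$. As distinct components of $F$ are disjoint, at most one such $F_0$ can protrude past each endpoint of $C_S$, so $\widetilde C$ is obtained from $C_S$ by extending it at each end by a length $<\|g\|_S$, giving $\mathrm{length}(\widetilde C)<\mathrm{length}(C_S)+2\|g\|_S$. Finally, choosing a segment $[y_0,g^{n_T}y_0]\subseteq C_T$ of length $n_T\|g\|_T$, its full $p$-preimage in $A$ is the segment $[\alpha,g^{n_T}\beta]$ with $[\alpha,\beta]=p^{-1}(y_0)\cap A$, of length $n_T\|g\|_S+(\beta-\alpha)\ge n_T\|g\|_S$, and it is contained in $\widetilde C$. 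Therefore
$$n_T\|g\|_S\ \le\ \mathrm{length}(\widetilde C)\ <\ \mathrm{length}(C_S)+2\|g\|_S\ <\ (n_S+1)\|g\|_S+2\|g\|_S,$$
so $n_T<n_S+3$, i.e. $n_T\le n_S+2$. (When $C_S$ is empty or a single point, or when $h$ becomes elliptic in $T$, the same analysis shows that every relevant $F_0$ contains the bridge point from $A$ to $\Char_S(h)$, so $\widetilde C$ has length $<\|g\|_S$ and $n_T=0$.)

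The main obstacle is the description of $\widetilde C$ in the third step: one has to see that collapsing $F$ can only create, and only destroy, a controlled amount of overlap between $\Axis(g)$ and $\Char(h)$, concentrated near the two endpoints of the overlap region, and it is exactly these two boundary components of $F$ that produce the additive constant $2$. Everything else reduces to the monotonicity of $p$ on geodesics, i.e. to $\BBT(p)=0$.
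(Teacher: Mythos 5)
Your argument is correct, and it rests on the same underlying mechanism as the paper's proof -- the discrepancy between $n_S$ and $n_T$ can only be created at the two ends of the overlap region -- but the implementation is genuinely different. The paper argues combinatorially in the direction $T\to S$: it lifts $k$ consecutive fundamental domains $I_1,\dots,I_k\subseteq\Char_T(h)$ of $\Axis_T(g)$ to fundamental domains $\tilde I_1,\dots,\tilde I_k$ of $\Axis_S(g)$, observes that each $\tilde I_i$ must meet $\Char_S(h)$, and then invokes convexity of $\Char_S(h)$ to conclude that the $k-2$ middle lifts lie entirely inside it; this is a three-line proof. You instead work metrically with the collapsed subforest $F$: you identify the trace of the preimage of $\Char_T(h)$ on $\Axis_S(g)$ as $C_S$ enlarged by the traces $F_0\cap\Axis_S(g)$ of collapsed components, bound each such trace by $\|g\|_S$ (via the nice observation that a $g$-invariant collapsed component would force $g$ to be elliptic in $T$), and note that at most one component can protrude past each endpoint of $C_S$, giving $n_T\|g\|_S<\mathrm{length}(C_S)+2\|g\|_S$ and hence $n_T\le n_S+2$; the lower bound is the same easy push-forward in both proofs. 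Your version is longer, but it makes explicit a point the paper passes over quickly (why each lifted fundamental domain meets $\Char_S(h)$ at all), and it localises precisely where the two extra fundamental domains can appear. One small caveat: you use throughout that the collapse map is the quotient map collapsing a $G$-invariant subforest to points and injective (isometric) off it, whereas the paper's formal definition of a collapse also allows shrinking an orbit of edgelets without collapsing it; since the situation actually used in Lemma~\ref{lemma:barrier-2} is the subforest collapse coming from a refinement, the paper's own lifting argument relies on the same picture, and your argument adapts with only cosmetic changes, this is not a genuine gap.
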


\begin{proof}
Every fundamental domain of $\Axis_S(g)$ is sent to a fundamental domain of $\Axis_T(g)$ under the collapse map $f:S\to T$, and $\Char_S(h)$ is sent to $\Char_T(h)$; this shows that $n_S\le n_T$. Conversely, let $I_1,\dots,I_k$ be fundamental domains of $\Axis_T(g)$ that are all contained in $\Char_T(h)$, aligned in this order along $\Axis_T(g)$, and pairwise intersecting in at most one point. Then we can lift $I_1,\dots,I_k$ to fundamental domains $\tilde{I}_1,\dots,\tilde{I}_k$ of $\Axis_S(g)$, aligned in this order, pairwise intersecting in at most one point, and the fact that all $I_i$ are contained in $\Char_T(h)$ implies that all $\tilde{I}_i$ intersect $\Char_S(h)$ nontrivially. By convexity of $\Char_S(h)$, this implies that $\tilde{I}_2,\dots,\tilde{I}_{k-1}$ are contained in $\Char_S(h)$. This shows that $n_T\le n_S+2$.    
\end{proof}

\begin{lemma}\label{lemma:barrier-2}
Let $G$ be a finitely generated infinitely-ended group, let $T$ be a simplicial $G$-tree. Let $g\in G$ be an element which is nowhere elliptic in $\calz$-trees. Assume that there exist an $\Aut(G)$-invariant deformation space $\cald$ of simplicial $G$-trees with finite edge stabilisers of bounded cardinality, and a tree $S\in\cald$ coming with a $G$-equivariant collapse map $S\to T$.

\noindent 
Then $T$ has finite edge stabilisers, and $g$ is $T$-loxodromic and it has the persistence of long intersections property in $T$.
\end{lemma}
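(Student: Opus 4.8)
The plan is to verify the three assertions successively, the persistence property being the main point. Since $S$ and $T$ carry the simplicial metric, the collapse map $f\colon S\to T$ simply collapses a $G$-invariant subforest of $S$, so the edges of $T$ are in $G$-equivariant bijection with the non-collapsed edges of $S$, and this bijection preserves edge stabilisers; as $S\in\cald$ has finite edge stabilisers, so does $T$. A nondegenerate segment of $T$ contains a nondegenerate subsegment in the interior of a single edge, whose pointwise stabiliser coincides with the (finite) pointwise stabiliser of that edge, so all arc stabilisers of $T$ are finite. Hence $T$ is a real $G$-tree with virtually cyclic arc stabilisers, and since $g$ is nowhere elliptic in $\calz$-trees it is loxodromic in $T$ (in particular $T$ is nontrivial). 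The same holds for every $\varphi(g)$ with $\varphi\in\Aut(G)$: it is again nowhere elliptic in $\calz$-trees — precomposing by $\varphi$ turns a $\calz$-tree on which $\varphi(g)$ is elliptic into one on which $g$ is elliptic — hence $T$-loxodromic, and likewise $S$-loxodromic.

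For the persistence of long intersections in $T$, recall from Lemma~\ref{lemma:barrier} that $g$ has this property in $S$; let $n_S(\cdot)$ be the corresponding function from Definition~\ref{de:Persistence}. I claim that $n_T(C):=n_S(C+1)+3$ works in $T$. The argument transfers overlaps through $f$ in both directions. \emph{Downwards, without loss}: $f$ also defines a collapse map $S\cdot\varphi\to T\cdot\varphi$, which carries $\Axis_{S\cdot\varphi}(g)$ onto $\Axis_{T\cdot\varphi}(g)$ and each $\Axis_{S\cdot\varphi}(h)$ onto $\Axis_{T\cdot\varphi}(h)$, so it sends a segment of $\Axis_{S\cdot\varphi}(g)\cap\bigcap_{h}\Axis_{S\cdot\varphi}(h)$ made of $k$ fundamental domains of $\Axis_{S\cdot\varphi}(g)$ onto a segment of $\Axis_{T\cdot\varphi}(g)\cap\bigcap_{h}\Axis_{T\cdot\varphi}(h)$ made of $k$ fundamental domains of $\Axis_{T\cdot\varphi}(g)$. \emph{Upwards, losing two fundamental domains}: if $[a,g^m a]\subseteq\Axis_T(g)$ consists of $m$ consecutive fundamental domains all contained in $\Axis_T(h)$ for every $h\in\mathcal{X}$, fix $\tilde a\in\Axis_S(g)$ with $f(\tilde a)=a$ and set $\tilde I_j:=[g^{j-1}\tilde a,g^j\tilde a]$; since $f(\tilde I_j)=[g^{j-1}a,g^j a]\subseteq\Axis_T(h)$, the argument in the proof of Lemma~\ref{lemma:collapse-fundamental-domains} shows $\tilde I_j\cap\Axis_S(h)\neq\emptyset$, whence $\tilde I_2\cup\cdots\cup\tilde I_{m-1}\subseteq\Axis_S(h)$ by convexity, and as the lift does not depend on $h$ we obtain $\tilde I_2\cup\cdots\cup\tilde I_{m-1}\subseteq\Axis_S(g)\cap\bigcap_{h\in\mathcal{X}}\Axis_S(h)$. (Here the elements of $\mathcal{X}$, being $T$-loxodromic, are automatically $S$-loxodromic, since elements elliptic in $S$ stay elliptic in $T$.)

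Now combine. Given $\mathcal{X}$ made of $T$-loxodromic elements with $\Axis_T(g)\cap\bigcap_{h}\Axis_T(h)$ containing a segment of length $\ge n_T(C)\|g\|_T$, extract $n_T(C)-1$ consecutive fundamental domains of $\Axis_T(g)$ inside it, lift them (losing two) to a segment of $\Axis_S(g)\cap\bigcap_{h}\Axis_S(h)$ of length $\ge n_S(C+1)\|g\|_S$, invoke persistence of $g$ in $S$ with constant $C+1$ to get that all $\varphi(h)$ ($h\in\mathcal{X}$) are $S$-loxodromic and $\Axis_S(\varphi(g))\cap\bigcap_{h}\Axis_S(\varphi(h))$ contains at least $C$ fundamental domains of $\Axis_S(\varphi(g))$, and push these forward through $f$ to a segment of $\Axis_T(\varphi(g))\cap\bigcap_{h}\Axis_T(\varphi(h))$ of length $C\|\varphi(g)\|_T$. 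It remains to see each $\varphi(h)$ is $T$-loxodromic: it has infinite order (being $S$-loxodromic), while the inequality in Lemma~\ref{lemma:collapse-fundamental-domains}, applied to $f\colon S\cdot\varphi\to T\cdot\varphi$, forces $\Char_T(\varphi(h))$ to contain a fundamental domain of $\Axis_T(\varphi(g))$, hence to be nondegenerate — which is impossible for an infinite-order $T$-elliptic element as $T$ has finite arc stabilisers. This is precisely the conclusion demanded by Definition~\ref{de:Persistence}.

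The one delicate step is the upward transfer: one must produce a genuine \emph{common} overlap of all the axes $\Axis_S(h)$, $h\in\mathcal{X}$, rather than a merely pairwise one, and this is why the single lift $\tilde a$ of an endpoint of the common overlap $[a,g^m a]\subseteq\Axis_T(g)$ is used for every $h$ at once. The surrounding arithmetic — the $\pm1$ discrepancies between lengths of segments and numbers of fundamental domains, and the passage from $C$ to $C+1$ when using persistence in $S$ — affects only the precise value of $n_T(C)$.
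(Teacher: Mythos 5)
Your proof is correct and follows essentially the same route as the paper's: finite edge stabilisers of $T$, $T$-loxodromicity of $g$ (and of each $\varphi(g)$) from being nowhere elliptic in $\calz$-trees, and transfer of the persistence property from $S$ (Lemma~\ref{lemma:barrier}) through the collapse map via the fundamental-domain comparison of Lemma~\ref{lemma:collapse-fundamental-domains}. You additionally spell out two points the paper leaves implicit --- the use of a single common lift so that the upward transfer applies to all $h\in\mathcal{X}$ simultaneously, and the $T$-loxodromicity of the elements $\varphi(h)$ --- which is a sharpening of the same argument rather than a different approach.
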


\begin{proof}
Notice that preimage in $S$ of a vertex of $T$ is a subtree. Consequently, an edge stabiliser in $T$ stabilises two disjoint subtrees in $S$, and so it must be finite because it stabilises the bridge between these two subtrees and because edge stabilisers in $S$ are finite. Therefore, $T$ has finite edge stabilisers. It follows that $g$ is $T$-loxodromic since $g$ is nowhere elliptic in $\calz$-trees. 

\medskip \noindent
Let $C\ge 1$, and let $n(C)$ be the number coming from the persistence of long intersections property for $g$ in $S$ (ensured by Lemma~\ref{lemma:barrier}). Let $\varphi\in \Aut(G)$, and let $\mathcal{X}\subseteq G$. Assume that all elements in $\mathcal{X}$ are $T$-loxodromic and that $$\Axis_T(g)\cap\bigcap_{h\in\mathcal{X}}\Axis_T(h)$$ contains $n(C)+2$ fundamental domains of the axis of $g$. Using Lemma~\ref{lemma:collapse-fundamental-domains}, we deduce that $$\Axis_S(g)\cap\bigcap_{h\in\mathcal{X}}\Axis_S(h)$$ contains $n(C)$ fundamental domains of the axis of $g$. By Lemma~\ref{lemma:barrier}, all elements in $\varphi(\mathcal{X})$ are $S$-loxodromic, and $$\Axis_S(\varphi(g))\cap\bigcap_{h\in\mathcal{X}}\Axis_S(\varphi(h))$$ contains $C$ fundamental domains of the axis of $\varphi(g)$. Using Lemma~\ref{lemma:collapse-fundamental-domains} again, this implies that the same is true in $T$. 
\end{proof}

\noindent
We are now in position to complete our proof of the main result of the section.

\begin{proof}[Proof of Proposition \ref{prop:barrier}.]
Since $G$ is finitely generated and $T$ is a minimal $G$-tree, there are only finitely many $G$-orbits of edges. We can therefore let $K$ be the maximal cardinality of an edge stabiliser in $T$, which is finite. Let $\mathscr{A}_K$ be the collection of all finite subgroups of $G$ of cardinality at most $K$, and let $\mathscr{D}$ be the JSJ deformation space of $G$ over subgroups in $\mathscr{A}_K$: this exists by Linnell's accessibility \cite{Lin}, see e.g.\ \cite[Remark~3.3]{GL-jsj}, and it is $\Aut(G)$-invariant. In addition, the tree $T$ is a splitting of $G$ over $\mathscr{A}_K$, and as such it is universally elliptic (i.e.\ its edge stabilisers, being finite, are elliptic in all splittings of $G$ over $\mathscr{A}_K$). It thus follows from \cite[Lemma~2.15]{GL-jsj} that $T$ has a refinement in $\mathscr{D}$. The conclusion therefore follows from Lemma~\ref{lemma:barrier-2}. 
\end{proof}

\subsection{Acylindrical hyperbolicity of the automorphism group}\label{sub:ThmSD}

\noindent
We are now ready to prove the main result of the paper.

\begin{theo}\label{thm:AutEnds}
Let $G$ be a finitely generated infinitely-ended group. Then $\Aut(G)$ is acylindrically hyperbolic.
\end{theo}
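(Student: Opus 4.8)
The plan is to apply the general criterion of Proposition~\ref{prop:SimpleOne} to $\mathscr{A}=\Aut(G)$. For the tree, I would take a Stallings--Dunwoody splitting: since $G$ is finitely generated and infinitely-ended, it splits as a finite graph of groups with finite edge groups and finitely generated vertex groups that are finite or one-ended, and I let $T$ be the associated Bass--Serre tree. Then $T$ is a minimal simplicial $G$-tree with finitely many orbits of edges, all of whose edge stabilisers have cardinality at most some $K$, and the $G$-action is nonelementary because $G$ is not virtually cyclic. Since $Z(G)$ lies in the kernel of the $G$-action on $T$ by Lemma~\ref{lem:center}, it is finite, so $\mathrm{Inn}(G)\cong G/Z(G)$, and hence $\Aut(G)$, is not virtually cyclic. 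I would also fix a set $\calp$ of conjugacy representatives of the infinite (one-ended) vertex groups: these are finitely generated proper subgroups, $G$ is hyperbolic relative to $\calp$, and the pair $(G,\calp)$ is nonelementary (if $G$ is virtually free one simply takes $\calp=\emptyset$ and uses that $G$ is hyperbolic).

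To produce the element $g$, note that the $G$-action on $T$ witnesses that $G$ is acylindrically hyperbolic relative to $\calp$: a tree is hyperbolic, the action is nonelementary with finite edge stabilisers (so it admits a WPD loxodromic element), and each subgroup in $\calp$ is elliptic. As $G$ is not virtually cyclic, Lemma~\ref{lemma:existence-nowhere-elliptic} provides $g\in G$ which is nowhere elliptic in $\calz\calp$-trees, hence in particular in $\calz$-trees, so $g$ is $T$-loxodromic and $\ad_g\in\Aut(G)$. Moreover, precomposing the $G$-action in a $\calz$-tree by an automorphism again yields a $\calz$-tree, so being nowhere elliptic in $\calz$-trees is $\Aut(G)$-invariant; hence every $\varphi(g)$ with $\varphi\in\Aut(G)$ is nowhere elliptic in $\calz$-trees and thus $T$-loxodromic.

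I would then verify the four hypotheses of Proposition~\ref{prop:SimpleOne}. \emph{Stable WPD:} for each $\varphi$, the pointwise stabiliser of any arc of length at least $1$ in $T$ is contained in an edge stabiliser, hence has at most $K$ elements, so $\varphi(g)$ is $(1,K)$-WPD with uniform constants. \emph{Elementary fixator:} since $g$ is nowhere elliptic in $\calz\calp$-trees and $G$ is hyperbolic relative to the finite collection $\calp$ of finitely generated subgroups, Proposition~\ref{prop:nowhere-elliptic-fixator} gives that $\langle\ad_g\rangle$ has finite index in $\{\varphi\mid\varphi(g)=g\}$. \emph{Persistence of long intersections:} every element of $\Aut(G)\cdot g$ is nowhere elliptic in $\calz$-trees, so Proposition~\ref{prop:barrier} applies verbatim to $T$. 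For \emph{Nielsen realisation} I would argue as follows: the first two conditions together with Lemma~\ref{lemma:elementary-fixator} show that $\Stab_{\Aut(G)}(E(g))$ is virtually cyclic, so its image $H$ in $\Out(G)$ is finite. Let $\mathscr{D}$ be the $\Aut(G)$-invariant and nonempty JSJ deformation space of $G$ over finite subgroups of cardinality at most $K$; it contains a refinement of $T$, and every tree in it is a minimal simplicial $G$-tree with finite edge stabilisers of bounded cardinality and nonelementary $G$-action, so the three conditions already checked hold for any such tree. Invoking a Nielsen-realisation result for finite subgroups of $\Out(G)$ acting on deformation spaces (Culler \cite{Cul} when $G$ is free, and the deformation-space theory of Guirardel--Levitt in general, namely that a subgroup of $\Out(G)$ with bounded orbits fixes a point), $H$ fixes a point $[T']\in\mathscr{D}$. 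Replacing $T$ by a representative $T'$, one has that $T'\cdot\varphi$ is $G$-equivariantly isometric to $T'$ for every $\varphi\in\Gamma:=\langle\Stab_{\Aut(G)}(E(g)),\mathrm{Inn}(G)\rangle$; since the $G$-action on $T'$ is minimal and nonelementary, Lemma~\ref{lem:center} makes each witnessing equivariant isometry unique, so $\varphi\mapsto\iota_\varphi$ is a homomorphism $\Gamma\to\Isom(T')$ whose restriction to $\mathrm{Inn}(G)$ is the standard action, which is exactly the Nielsen realisation hypothesis. Proposition~\ref{prop:SimpleOne} then yields that $\Aut(G)$ is acylindrically hyperbolic, with $\ad_g$ a generalised loxodromic element.

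The main obstacle is the Nielsen realisation step: realising the finite subgroup $H\le\Out(G)$ as a group of isometries of some tree in the Stallings--Dunwoody deformation space. For free groups this is Culler's theorem, but for a general finitely generated infinitely-ended $G$ it genuinely requires the structure theory of deformation spaces of $G$-trees, which is why that background is developed at the start of this section. A secondary point needing care is the verification that the Stallings--Dunwoody splitting really does present $G$ as a nonelementary relatively hyperbolic group with finitely generated proper peripheral subgroups, so that Lemma~\ref{lemma:existence-nowhere-elliptic} and Proposition~\ref{prop:nowhere-elliptic-fixator} are applicable.
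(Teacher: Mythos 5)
Your overall architecture is the paper's: the same criterion (Proposition~\ref{prop:SimpleOne}), the same element $g$ produced by Lemma~\ref{lemma:existence-nowhere-elliptic} from a splitting over finite subgroups, and the same verifications of stable WPD, elementary fixator (Proposition~\ref{prop:nowhere-elliptic-fixator} plus Lemma~\ref{lemma:elementary-fixator}) and persistence (Proposition~\ref{prop:barrier}); your uniqueness argument via Lemma~\ref{lem:center}, turning an isometrically fixed tree into an honest action of $\langle\Stab_{\Aut(G)}(E(g)),\mathrm{Inn}(G)\rangle$, is also correct. The genuine gap is exactly where you flag it: the Nielsen realisation step. You reduce it to the assertion that the finite image $H$ of $\Stab_{\Aut(G)}(E(g))$ in $\Out(G)$ fixes a point of the JSJ deformation space over finite subgroups of order at most $K$, citing ``Culler for free groups and Guirardel--Levitt in general, namely that a subgroup of $\Out(G)$ with bounded orbits fixes a point.'' No such fixed-point statement is proved in this paper, and it is not available off the shelf in this generality: contractibility of deformation spaces does not by itself yield fixed points for finite group actions, and the known realisation theorems of this kind (Culler/Khramtsov/Zimmermann for free groups, and their analogues for free products) are substantial results whose proofs in the splittings-over-finite-groups setting go precisely through the construction you are trying to outsource. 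As written, the key step is assumed rather than proved. (A secondary inaccuracy: a finitely generated group need not be accessible, so you cannot claim a splitting with finite or one-ended vertex groups; this is harmless since a one-edge Stallings splitting is all you use, but it should not be stated.)

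The paper sidesteps realisation inside a deformation space altogether, and this is the one idea your proposal is missing. Having shown that $\tilde H:=\langle\Stab_{\Aut(G)}(E(g)),\mathrm{Inn}(G)\rangle$ has finite image in $\Out(G)$, it observes that $\tilde H$ contains $\mathrm{Inn}(G)\simeq G/Z(G)$ with finite index, hence is itself finitely generated and infinitely-ended (here $Z(G)$ is finite), and then applies Stallings' theorem directly to $\tilde H$ to obtain a minimal simplicial $\tilde H$-tree $T$ with finite edge stabilisers. Viewed as a $G$-tree, this $T$ makes the Nielsen realisation condition tautological, $g$ is automatically $T$-loxodromic and uniformly WPD since edge stabilisers are finite of bounded order, and persistence of long intersections holds because Proposition~\ref{prop:barrier} applies to \emph{any} minimal simplicial $G$-tree with finite edge stabilisers (the refinement into an $\Aut(G)$-invariant deformation space happens inside its proof, not in the choice of $T$). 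If you want to keep your deformation-space formulation, the honest proof of your fixed-point claim is exactly this extension-plus-Stallings argument, so you should either carry it out or adopt the paper's shortcut.
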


\begin{proof}
Our goal is to apply the criterion provided by Proposition \ref{prop:SimpleOne}.

\medskip \noindent
Let $S$ be a nontrivial minimal simplicial $G$-tree with finite edge stabilisers (of bounded cardinality), and let $\calp$ be a finite set of representatives of the vertex stabilisers of $S$. It follows from Bowditch's definition of relative hyperbolicity \cite[Definition 2]{Bow} that the group $G$ is hyperbolic relative to $\calp$. Let $g\in G$ be an element which is nowhere elliptic in $\calz\calp$-trees, as given by Lemma \ref{lemma:existence-nowhere-elliptic}. By Proposition~\ref{prop:nowhere-elliptic-fixator}, the group $\Fix_{\Aut(G)}(g)$ contains $\langle \ad_g\rangle$ as a finite-index subgroup. Also, notice that $g$ must be $S$-loxodromic, so it is WPD in $S$ as edge stabilisers are finite. Therefore, Lemma~\ref{lemma:elementary-fixator} implies that $\Stab_{\Aut(G)}(E(g))$ contains $\langle\ad_g\rangle$ as a finite-index subgroup.

\medskip \noindent 
Let $H:=\Stab_{\Aut(G)}(E(g))$, and let $\tilde{H}:=\langle H,\mathrm{Inn}(G)\rangle$. Then $\tilde{H}$ has finite image in $\Out(G)$, and therefore it contains $\mathrm{Inn}(G)$ as a finite index subgroup. As $G$ is infinitely-ended, its center $Z(G)$ is finite (as a consequence, for instance, of Lemma \ref{lem:center}), so $\mathrm{Inn}(G)\simeq G/Z(G)$ is again finitely generated and infinitely-ended. Therefore $\tilde{H}$ is finitely generated and infinitely-ended. By Stallings' theorem \cite{Sta,Sta2}, there exists a nontrivial simplicial $\tilde{H}$-tree $T$ with finite edge stabilisers. Up to replacing $T$ with an $\tilde{H}$-invariant subtree, we shall assume that $T$ is minimal. In particular $T$ can be viewed as a $G$-tree, and as such it satisfies Assumption~3 from Proposition~\ref{prop:SimpleOne} (Nielsen realisation). Notice that the element $g$ has to be $T$-loxodromic, so it is WPD for the $G$-action on $T$ as edge stabilisers are finite. As $G$ is not virtually cyclic, the $G$-action on $T$ is nonelementary.

\medskip\noindent We now apply Proposition~\ref{prop:SimpleOne} to the element $g$ and the tree $T$. As edge stabilisers in $T$ have bounded cardinality, all elements of $G$ which are $T$-loxodromic are uniformly WPD, showing that Assumption~1 holds. Assumptions~2 (Elementary fixator) and~3 (Nielsen realisation) have been checked above. Notice that every element $g'$ in the $\Aut(G)$-orbit of $g$ is nowhere elliptic in $\calz$-trees: indeed, if $g'=\varphi(g)$ were elliptic in a tree $T$ with virtually cyclic edge stabilizers, then $g$ would be elliptic in $T\cdot\varphi^{-1}$ which also has virtually cyclic edge stabilizers. Thus Assumption~4 (Persistence of long intersections) follows from Proposition~\ref{prop:barrier}. Proposition~\ref{prop:SimpleOne} therefore implies that $\Aut(G)$ is acylindrically hyperbolic.
\end{proof}

\begin{remark}\label{rk:finite-generation}
The finite generation assumption on $G$ is crucial in Theorem~\ref{thm:AutEnds}. Here is an example of a (non-finitely generated) group $G$ that splits as a free product, for which $\Aut(G)$ fails to be acylindrically hyperbolic. As pointed by the referee, the automorphism group of a free group of infinite countable rank is not acylindrically hyperbolic (which can be proved by following the lines of the argument below). Here is another example where $\Aut(G)$ fails to be acylindrically hyperbolic, even though $G$ splits as a free product of finitely many subgroups that are either freely indecomposable or isomorphic to $\mathbb{Z}$. 

\medskip \noindent 
Let $Z$ be the direct sum of countably many copies of $\mathbb{Z}$, and let $G=Z\ast Z$. We claim that $\Aut(G)$ is not acylindrically hyperbolic. Indeed, assume towards a contradiction that $\Aut(G)$ acts acylindrically and nonelementarily on a hyperbolic space $X$. Then $\mathrm{Inn}(G)$ acts nonelementarily on $X$, see e.g.\ \cite[Lemma~7.2]{OsinAcyl}. Thus, some inner automorphism is WPD with respect to the $\Aut(G)$-action on $X$, which implies in particular that some inner automorphism has virtually cyclic centralizer. 

\medskip \noindent But on the other hand, we claim that every inner automorphism $\ad_g$ has non-virtually cyclic centralizer, which leads to a contradiction. Indeed, every $g\in G$ is contained in a subgroup of the form $Z_n\ast Z_n$, where $Z_n\subseteq Z$ is the direct sum of the first $n$ copies of $\mathbb{Z}$, and therefore $g$ is fixed by every automorphism $\varphi$ of $G$ that preserves each of the two copies of $Z$ from the free product $G=Z\ast Z$ and fixes $Z_n$ in each factor. This implies that the inner automorphism $\ad_g$ commutes with all such $\varphi$. 
\end{remark}

\begin{remark}
Combined with Proposition~\ref{prop:SimpleOne}, our proof of Theorem~\ref{thm:AutEnds} also shows that for every element $g\in G$ which is nowhere elliptic in $\calz\calp$-trees, the inner automorphism $\ad_g$ is a generalised loxodromic element of $\Aut(G)$. 
\end{remark}

\section{Relatively hyperbolic groups: the general case}\label{sec:rh}

\noindent
This last section is dedicated to the proof of Theorem \ref{thm:IntroRHacyl} from the introduction. 

\begin{thm}\label{thm:BigRH}
Let $G$ be a group which is hyperbolic relative to a finite collection $\mathcal{P}$ of finitely generated proper subgroups. Assume that $G$ is not virtually cyclic. Then $\mathrm{Aut}(G, \mathcal{P})$ is acylindrically hyperbolic.
\end{thm}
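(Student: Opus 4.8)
The plan is to split on the number of ends of $G$ and feed the two resulting cases into the two main results already proved: Theorem~\ref{thm:AutEnds} handles \emph{all} finitely generated infinitely-ended groups, and Proposition~\ref{prop:RH} handles one-ended relatively hyperbolic groups, so between them they should cover everything. First I would observe that $G$ is finitely generated: this is a standard consequence of being hyperbolic relative to a finite collection of finitely generated subgroups (in Bowditch's formulation the action on the fine hyperbolic graph has finitely many orbits of edges, finite edge stabilisers, and the infinite vertex stabilisers are the peripheral subgroups, which are finitely generated). Since $G$ is not virtually cyclic it is infinite and not two-ended, hence it has either one or infinitely many ends, and these two alternatives are the two cases of the proof.

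\textbf{The infinitely-ended case.} Suppose $G$ has infinitely many ends. Then Theorem~\ref{thm:AutEnds} applies and shows that $\mathrm{Aut}(G)$ is acylindrically hyperbolic. To descend to $\mathrm{Aut}(G,\mathcal{P})$, note that $\mathrm{Inn}(G)\subseteq\mathrm{Aut}(G,\mathcal{P})$, because an inner automorphism sends each $P\in\mathcal{P}$ to a conjugate of $P$; and $\mathrm{Inn}(G)\cong G/Z(G)$ is infinite, since $Z(G)$ is finite for an infinitely-ended group (by Lemma~\ref{lem:center}, exactly as in the proof of Theorem~\ref{thm:AutEnds}). Lemma~\ref{lemma:acyl-hyp-subgroup} then immediately gives that $\mathrm{Aut}(G,\mathcal{P})$ is acylindrically hyperbolic. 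Note that in this case the relative hyperbolicity hypothesis is used only to ensure that $G$ is finitely generated.

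\textbf{The one-ended case.} Suppose $G$ is one-ended. Then $G$ admits no nontrivial splitting over a finite subgroup, so every simplicial $G$-tree with finite edge stabilisers has a global fixed point; in particular $G$ is one-ended relative to $\mathcal{P}$. Being finitely generated, one-ended and not virtually cyclic, $G$ is not virtually free, since a finitely generated virtually free group has zero, two, or infinitely many ends. Finally, no subgroup in $\mathcal{P}$ equals $G$, as the subgroups in $\mathcal{P}$ are proper. All the hypotheses of Proposition~\ref{prop:RH} are therefore satisfied, and that proposition gives that $\mathrm{Aut}(G,\mathcal{P})$ is acylindrically hyperbolic, which completes the proof.

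\textbf{Expected difficulty.} I do not anticipate a genuine obstacle: the substance of both cases was already carried out in Sections~\ref{sec:rh-one-ended} and~\ref{sec:infinitely-ended}, and the present argument is purely a reduction. The one point that deserves care is the derivation of finite generation of $G$ from the standing hypotheses, since Remark~\ref{rk:finite-generation} shows that finite generation cannot be dropped from Theorem~\ref{thm:AutEnds}; so one should make sure this is genuinely a consequence of relative hyperbolicity with finitely many finitely generated peripheral subgroups, rather than silently assumed. Secondarily, one must verify that a one-ended $G$ meets every hypothesis of Proposition~\ref{prop:RH} — proper peripherals, not virtually free, one-ended relative to $\mathcal{P}$ — which is routine as indicated above.
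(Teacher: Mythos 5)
Your proof is correct and follows essentially the same route as the paper's: split according to whether $G$ is one-ended or infinitely-ended, apply Theorem~\ref{thm:AutEnds} together with Lemma~\ref{lemma:acyl-hyp-subgroup} in the infinitely-ended case, and Proposition~\ref{prop:RH} in the one-ended case. Your extra care in checking finite generation of $G$ and the hypotheses of Proposition~\ref{prop:RH} (properness of the peripherals, not virtually free, one-ended relative to $\mathcal{P}$) only makes explicit what the paper leaves implicit.
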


\begin{proof}
If $G$ is infinitely-ended, then we know from Theorem \ref{thm:AutEnds} that $\mathrm{Aut}(G)$ is acylindrically hyperbolic. As $\mathrm{Inn}(G)$ is infinite and contained in $\Aut(G,\calp)$, it then follows from Lemma~\ref{lemma:acyl-hyp-subgroup} that $\Aut(G,\calp)$ is acylindrically hyperbolic.

\medskip \noindent
Suppose now that $G$ is one-ended. In particular, $G$ is not virtually free. The acylindrical hyperbolicity of $\mathrm{Aut}(G, \mathcal{P})$ then follows from Proposition~\ref{prop:RH}. 
\end{proof}

\begin{cor}
Let $G$ be a group which is hyperbolic relative to a finite collection $\mathcal{P}$ of finitely generated proper subgroups. Assume that $G$ is not virtually cyclic, and that the groups in $\mathcal{P}$ are not relatively hyperbolic. Then $\mathrm{Aut}(G)$ is acylindrically hyperbolic.
\end{cor}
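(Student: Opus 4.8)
The plan is to deduce the corollary from Theorem~\ref{thm:BigRH} by identifying $\Aut(G)$ with $\Aut(G,\mathcal{P}')$ for an appropriate sub-collection $\mathcal{P}'$ of $\mathcal{P}$. First I would let $\mathcal{P}'$ be the collection obtained from $\mathcal{P}$ by removing all finite subgroups. Removing finite peripheral subgroups from a relatively hyperbolic structure again yields a relatively hyperbolic structure, so $G$ remains hyperbolic relative to $\mathcal{P}'$; moreover the subgroups in $\mathcal{P}'$ are still finitely generated, proper, and not relatively hyperbolic, the last property being intrinsic to each subgroup.

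The crux is to show that $\Aut(G)=\Aut(G,\mathcal{P}')$, that is, that every $\varphi\in\Aut(G)$ sends each member of $\mathcal{P}'$ to a conjugate of a member of $\mathcal{P}'$. For this I would appeal to the canonicity of peripheral structures whose peripheral subgroups are all infinite and not relatively hyperbolic: such a structure is uniquely determined by $G$, up to conjugacy and reordering (results of Drutu and Sapir on tree-graded spaces, and of Behrstock, Drutu and Mosher). Since $\varphi$ is an isomorphism of $G$, the collection $\varphi(\mathcal{P}')$ is again a peripheral structure of $G$ of the same type, hence it agrees with $\mathcal{P}'$ up to conjugacy and reordering; this is exactly the statement $\varphi\in\Aut(G,\mathcal{P}')$.

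It then remains to apply Theorem~\ref{thm:BigRH} to the pair $(G,\mathcal{P}')$: as $G$ is not virtually cyclic and $\mathcal{P}'$ is a finite collection of finitely generated proper subgroups, that theorem gives that $\Aut(G,\mathcal{P}')=\Aut(G)$ is acylindrically hyperbolic. The step demanding the most care is the canonicity input: one has to check that the hypothesis ``no subgroup in $\mathcal{P}$ is relatively hyperbolic'' is precisely what prevents a peripheral subgroup from being decomposed further (which could otherwise alter $\mathcal{P}'$), and that the required uniqueness statement holds when the peripheral subgroups are only assumed to be finitely generated rather than finitely presented. The passage from $\mathcal{P}$ to $\mathcal{P}'$ and the appeal to Theorem~\ref{thm:BigRH} are then routine.
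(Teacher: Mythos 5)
Your proposal is correct and follows essentially the same route as the paper: pass from $\mathcal{P}$ to $\mathcal{P}'$ by discarding the finite peripheral subgroups, identify $\Aut(G)$ with $\Aut(G,\mathcal{P}')$ using the fact that every automorphism permutes, up to conjugacy, the peripheral subgroups of a relatively hyperbolic structure whose peripherals are not relatively hyperbolic, and then apply Theorem~\ref{thm:BigRH}. The only cosmetic difference is that the paper quotes this invariance of the peripheral structure directly as a lemma of Minasyan and Osin, whereas you invoke the underlying canonicity results of Drutu--Sapir and Behrstock--Drutu--Mosher from which that lemma is obtained.
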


\begin{proof}
Let $\mathcal{P}'$ denote the collection of subgroups obtained from $\mathcal{P}$ by removing all the finite groups. Notice that $G$ is also hyperbolic relative to $\mathcal{P}'$. By \cite[Lemma~3.2]{MO2} (a reference for which we thank the referee), we have $\mathrm{Aut}(G)= \mathrm{Aut}(G, \mathcal{P}')$. Using Theorem~\ref{thm:BigRH}, it follows that $\Aut(G)$ is acylindrically hyperbolic, as desired.
\end{proof}

\addcontentsline{toc}{section}{References}

\bibliographystyle{alpha}
{\footnotesize\bibliography{AutEnded}}

\footnotesize
\begin{flushleft}
Anthony Genevois\\
Institut Montpellierain Alexander Grothendieck, 499-554 Rue du Truel, 34090 Montpellier, France \\
\emph{e-mail:}\texttt{anthony.genevois@umontpellier.fr}\\[5mm]

Camille Horbez\\
Universit\'e Paris-Saclay, CNRS,  Laboratoire de math\'ematiques d'Orsay, 91405, Orsay, France \\
\emph{e-mail:}\texttt{camille.horbez@universite-paris-saclay.fr}\\[8mm]
\end{flushleft}

\end{document}